\documentclass[pdflatex,12pt,a4paper,twoside]{article}
\usepackage{authblk}
\usepackage{amsmath,amsfonts,amssymb,amsthm}
\usepackage{ifthen}
\usepackage{graphicx}
\usepackage{epsfig}
\usepackage{nicefrac}
\usepackage{mathrsfs}
\usepackage[a4paper,left=30mm,right=30mm,top=26mm,bottom=26mm]{geometry}
\sloppy
\usepackage{amsmath}
\usepackage{amsfonts}
\newcommand{\R}{\mathbb{R}}
\newcommand{\N}{\mathbb{N}}
\newcommand{\C}{\mathbb{C}}
\newcommand{\Z}{\mathbb{Z}}
\newcommand{\eps}{\varepsilon}

\newcommand{\supp}{\mathrm{supp}}

%


\def\XXint#1#2#3{{\setbox0=\hbox{$#1{#2#3}{\int}$}
     \vcenter{\hbox{$#2#3$}}\kern-.5\wd0}}

\usepackage[
    bookmarks,
    colorlinks=true,
    bookmarksopen=true,
    bookmarksopenlevel=1,
    linkcolor=black,
    citecolor=blue,
    urlcolor=blue,
    filecolor=blue,
    anchorcolor=red,
    pdfstartview=FitH,
    pdfpagelayout=OneColumn,
    plainpages=false, 
    pdfpagelabels, 
    hypertexnames=false, 
    linktocpage, 
]{hyperref}

\newtheorem{theorem}{Theorem}[section]
\newtheorem{definition}[theorem]{Definition}
\newtheorem{lemma}[theorem]{Lemma}
\newtheorem{proposition}[theorem]{Proposition}
\newtheorem{corollary}[theorem]{Corollary}
\newtheorem{assumption}[theorem]{Assumption}

\newtheorem{remark}[theorem]{Remark}
\newtheorem{example}{Example}

\numberwithin{equation}{section}

\title{High-order homogenization in   optimal control  by  the Bloch wave method}

\author{Agnes Lamacz-Keymling and Irwin Yousept\thanks{Universit\"at Duisburg-Essen, Fakult\"at f\"ur Mathematik, Thea-Leymann-Stra\ss e 9, D-45127 Essen, Germany.}}

\begin{document}
\maketitle

\begin{abstract}
This article examines a linear-quadratic elliptic optimal control problem in which the cost functional and the state equation involve a highly oscillatory periodic coefficient $A^\eps$.  
The small parameter $\eps>0$ denotes the periodicity length. We propose a high-order effective control problem with constant coefficients that provides an approximation of the original one  
with error $O(\eps^M)$, where $M\in\N$ is as large as one likes. 
Our analysis relies on a Bloch wave expansion of the optimal solution and is performed in two steps. 
In the first step, we expand the lowest Bloch eigenvalue in a Taylor series to obtain a high-order effective optimal control problem. 
In the second step, the original and the effective problem are rewritten in terms of the Bloch and the Fourier transform, respectively. 
This allows for a direct comparison of the optimal control problems via the corresponding variational inequalities. 
\end{abstract}

\smallskip {\bf Keywords:} Optimal control, periodic homogenization, Bloch
analysis

  \smallskip
  {\bf MSC: 35B27,35P05, 49J20} 

\pagestyle{myheadings} 
\thispagestyle{plain} 

\markboth{A.\,Lamacz, I.\,Yousept}{High-order homogenization in optimal control  by the Bloch wave method}

\section{Introduction}

Many modern key technologies   call for mathematical modeling through partial differential equations (PDEs) with macro- and microstructures.    The simplest way to decode a microstructure is to consider periodic coefficients 
with a small periodicity length $\eps>0$. 
In such a situation, the central question concerns the effective or asymptotic behavior  of the problem in the homogenization limit $\eps\to 0$.  The periodic homogenization theory was mainly developed in the late '70s focusing on the effective behavior of elliptic PDEs  (see \cite{BLP,BP, SP}). Since then, the theory has been 
generalized to various types of equations and more complex models, including perforated domains \cite{CDGZ}, high contrast 
media, and singular geometries \cite{BS,LS}, just to mention a few. 

The homogenization theory plays as well a profound role in  the optimal control  of  PDEs with highly
oscillatory coefficients. Such problems are particularly encountered in electrical and electronic engineering applications \cite{Y13,Y17}, which require a substantial extension of the developed techniques by the homogenization theory. 
We refer to \cite{BDFS, KM,KR,KP1,KL} for the interplay of homogenization and optimal control based on weak convergence methods. However, those results are only able to characterize the behavior in the limit $\eps\to 0$. To the best of the authors' knowledge, neither
convergence rates nor higher-order approximations are available in the literature. In the classical homogenization theory, higher-order approximations are achieved by 
adding highly oscillatory terms to the effective solution, the so-called correctors, which solve microscopic cell problems and   characterize the oscillatory behavior 
of the original solution. Unfortunately, this strategy does not apply to   optimal control problems, since the control functions are 
typically restricted to an admissible set, and adding correctors to the effective control function will destroy its admissibility. 
To circumvent this difficulty, we propose the use of a spectral method involving the so-called Bloch waves, which allows for a variety of 
$L^2$-conditions in the admissible set. Examples are provided in Section \ref{sec:assumptions} below.  

This paper is aimed at  the high-order asymptotic behavior of a linear-quadratic elliptic optimal control problem with $\eps$-periodic coefficients and $\eps$-dependent admissible sets.
More precisely, we consider a symmetric and uniformly elliptic coefficient matrix $A\in L^\infty(\R^n;\R^{n\times n})$ with $n \in \mathbb N$ that is periodic 
with respect to the unit cube $Y:=[0,1]^n$.  Setting  $A^\eps(x):=A\left(\frac{x}{\eps}\right)$, we introduce  an energy  functional $E^\eps: H^1(\R^n)  \to \mathbb R$ and  a cost functional $J^\eps:      L^2(\R^n ) \times H^1(\R^n) \to \mathbb R$ as follows:
\begin{equation}\label{energyfunk}
 E^\eps(y):=\int_{\R^n} \nabla y(x)\cdot A^\eps(x)\nabla y(x) + |y(x)|^2\,dx,
\end{equation}
and 
\begin{align}
\label{eq:energyperiodic}
J^\eps(u, y)
:=\frac{\mu_1}{2}E^\eps (y-y^\eps_{d,1})
+\frac{\mu_2}{2}\int_{\R^n} |y(x)-y^\eps_{d,2}(x)|^2\,dx+
\frac{\kappa}{2}\int_{\R^n}|u(x)|^2\,dx
\end{align} 
with  fixed constants   $\mu_1,\mu_2\geq 0$ and $\kappa>0$. Furthermore,  $y^\eps_{d,1} \in H^1(\R^n)$ and $y^\eps_{d,2} \in L^2(\R^n)$ are given functions denoting, respectively, the desired gradient field and the desired state.  In view of \eqref{eq:energyperiodic}, our focus is set on the following linear-quadratic elliptic optimal control problem:
\begin{equation} \tag{$\textnormal{P}_\eps$} \label{P} \left\{
\begin{aligned}
\min  \quad &J^\eps(u_\eps, y_\eps)\\
\textrm{over} \quad   &(u_\eps, y_\eps)  \in U^\eps_{ad} \times H^1(\R^n)\\
\textrm{s.t.} \quad &-\nabla\cdot\left(A^\eps(x)\nabla y_\eps(x)\right) + y_\eps(x)=f^\eps(x) + u_\eps(x)\quad\text{in weak sense on }\R^n.
\
\end{aligned}
\right.
\end{equation}   
In the setting of \eqref{P}, $f^\eps \in L^2(\R^n)$ is a given function, and $\emptyset \neq U^\eps_{ad}\subset L^2(\R^n)$ is a  convex and    closed  subset  representing the  admissible control set. By a classical argument \cite{L,T}, the optimal control problem  \eqref{P} admits a unique optimal solution denoted throughout this paper by $(u^*_\eps, y^*_\eps)  \in U^\eps_{ad} \times H^1(\R^n)$. Let us emphasize that  the admissible control set $U^\eps_{ad}$  and  the  quantities $f^\eps,y^\eps_{d,1},y^\eps_{d,2}$  feature   highly oscillatory structures depending on $\epsilon$.  In particular, the numerical treatment of \eqref{P} is extremely costly since the fine scale, represented by the small parameter $\eps>0$, has to be resolved.  The underlying  $\eps$-dependence  in \eqref{P} will be specified   in Assumptions \ref{ass:data} and \ref{ass:admissibleset}.

More than two decades ago, Kesavan and Saint Jean Paulin \cite{KP1} analyzed the effective behavior of a similar linear-quadratic elliptic optimal control problem. They disregarded the role of the desired state and specified the desired gradient field to be zero, i.e., $\mu_2=0$ and $y^\eps_{d,1}\equiv 0$ in  \eqref{eq:energyperiodic}. However, differently from \eqref{P}, they allowed for a periodic coefficient $B^\eps$ in the   cost functional that may differ from the coefficient $A^\eps$ in  the state equation. The main contribution of  \cite{KP1}  is the weak convergence  for the optimal solution as $\epsilon \to 0$ towards the solution of an effective optimal control problem with an effective state equation. The  effective coefficient  $A^{\text{eff}}$  in the effective state equation corresponds to the classical 
effective coefficient from the periodic homogenization theory, while the effective coefficient $B^{\text{eff}}$ in the energy functional is a perturbation of the classical H-limit of $B^\eps$. 
In the case $A^\eps=B^\eps$, that is relevant for us, one obtains $A^{\text{eff}}=B^{\text{eff}}$. 
Unfortunately,  since the convergence property holds solely in the weak topology,   
the approximation   of the effective optimal control problem could be far  from precise.
Indeed,  from the  classical homogenization theory, it is well known that solutions to 
highly oscillatory problems exhibit   highly oscillatory behavior, and   the homogenization limit provides  an appropriate average of these oscillations. In order to obtain higher-order 
approximations, one has to capture the fast oscillations by correcting the homogenization limit, wherefore results of this type are referred to as corrector results. 
The ultimate goal of this paper is therefore to set up a corrector result of the following form: Given $M\in\N$, we seek for a proper approximation
$(u^*_{M},y^*_{M})$ of  the optimal solution $(u^*_\eps, y^*_\eps)$  to \eqref{P} such that 
\begin{equation}
\label{eq:highorderapp1}
\|u^*_\eps-u^*_{M}\|_{L^2(\R^n)} + \|y^*_\eps-y^*_{M}\|_{L^2(\R^n)}\leq C\eps^{2M}.
\end{equation}
The key tool of our approach is the Bloch wave expansion, which is introduced in the upcoming   section.

\subsection{Bloch expansion}
For the sake of completeness, we briefly motivate and collect well-known facts from the classical Bloch theory in the context of homogenization, 
which can be found, for instance, in \cite{CV,COV1,DLS,santosa}. 

The Bloch expansion is a generalization of the classical Fourier expansion. Every function $u\in L^2(\R^n;\C)$ can be written as 
\begin{equation*}
u(x)=\int_{\R^n}\hat u(\xi)\, e^{2\pi i\xi\cdot x}\,d\xi,
\end{equation*}
where $\hat u(\xi):=\int_{\R^n}u(x)e^{-2\pi i x\cdot \xi}\,dx$ denotes the classical Fourier-transform of $u$. Decomposing every $\xi\in\R^n$ into 
\begin{equation*}
\xi=k+\eta\qquad\text{with }k\in\Z^n\quad\text{and }\eta\in Z:=[-1/2,1/2)^n,
\end{equation*}
one obtains 
\begin{equation*}
u(x)=\int_{Z}\underbrace{\sum_{k\in\Z^n}\hat u(k+\eta)e^{2\pi i k \cdot x}}_{=:F=F(x;\eta)}e^{2\pi i \eta \cdot x }\,d\eta,
\end{equation*}
where for every $\eta\in Z$ the function $F(\cdot,\eta)$ is $Y$-periodic  with $Y:=[0,1]^n$. It can thus be expanded in the $Y$-periodic eigenfunctions to the following eigenvalue problem:

\begin{definition}[Bloch eigenvalue problem] 
Let $\eta\in Z=[-\frac12,\frac12)^{n}$ be fixed. Consider the operator
\begin{equation}
\label{eq:Bloch-Operator}
\mathcal{L}(\eta):=-(\nabla + 2\pi i\eta)\cdot\left(A(\cdot)(\nabla +2\pi i\eta)\right).
\end{equation}
By the classical spectral theory, one finds a sequence of eigenvalues $(\mu_m(\eta))_{m\in\N_0}$ and corresponding $Y$-periodic eigenfunctions
$(\Phi_m(\cdot;\eta))_{m\in\N_0}\subset H^1_\sharp(Y;\mathbb{C})$ of $\mathcal{L}(\eta)$, such that 
\begin{align*}
&0\leq \lambda_0(\eta)\leq \lambda_1(\eta)\leq\dots\quad\text{with }\lambda_m(\eta)\to\infty\,\text{ as }m\to\infty \quad\text{(Bloch eigenvalues)},\\
&(\Phi_m(\cdot;\eta))_{m\in\N_0}\quad\text{form a ONB of } L^2_\sharp(Z;\mathbb{C})\quad\text{(Bloch waves)}.
\end{align*}
The subscript $\sharp$ tags spaces of periodic functions. Furthermore we define
\begin{equation*}
\omega_m(x;\eta):=\Phi_m(x;\eta)e^{2\pi i \eta\cdot x} \quad\text{(quasi-periodic Bloch waves)}.
\end{equation*}
\end{definition}
Note that the quasi-periodic Bloch waves solve the eigenvalue problem
\begin{equation*}
-\nabla\cdot\left(A(x)\nabla\omega_m(x;\eta)\right)=\lambda_m(\eta)\omega_m(x;\eta).
\end{equation*}
This leads to the following classical result. 

\begin{proposition}[Bloch expansion] 
\label{thm:Blochentw}
Every $u\in L^2(\R^n;\mathbb{C})$ admits a unique expansion
\begin{equation}
\label{eq:Blochexpansion}
u(x)=\int_Z\sum_{m\in\N_0} \hat u_m(\eta)\Phi_m(x;\eta)e^{2\pi i \eta\cdot x}\,d\eta=\int_Z\sum_{m\in\N} \hat u_m(\eta)\omega_m(x;\eta)\,d\eta
\end{equation}
with the Bloch coefficients
\begin{equation}
\label{eq:Blochcoefficient}
\hat u_m(\eta):=\int_{\R^n}u(x)e^{-2\pi i \eta\cdot x}\overline{\Phi_m(x;\eta)}\,dx\quad\text{for all }m\in\N_0\,\text{ and }\eta\in Z. 
 \end{equation}
Furthermore Parseval's identity holds
\begin{equation}
\label{eq:Parseval}
\int_{\R^n}|u(x)|^2\,dx=\int_Z\sum_{m\in\N_0}|\hat u_m(\eta)|^2\,d\eta. 
\end{equation}
\end{proposition}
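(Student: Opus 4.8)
The plan is to lift the Bloch expansion from the Fourier expansion recalled above and then to remove all restrictions by density. First I would establish \eqref{eq:Blochexpansion}--\eqref{eq:Parseval} for $u$ in the Schwartz space $\calS(\R^n;\C)$, which is dense in $L^2(\R^n;\C)$ and on which Fourier inversion holds pointwise and every application of Fubini's theorem is legitimate. For such $u$ the computation preceding the proposition is rigorous: writing $\xi=k+\eta$ with $k\in\Z^n$, $\eta\in Z$, and splitting $\int_{\R^n}=\sum_{k\in\Z^n}\int_{k+Z}$, one gets $u(x)=\int_Z F(x;\eta)\,e^{2\pi i\eta\cdot x}\,d\eta$ with $F(x;\eta)=\sum_{k\in\Z^n}\hat u(k+\eta)e^{2\pi i k\cdot x}\in L^2_\sharp(Y;\C)$ for a.e.\ $\eta$. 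Since $|Y|=1$, the exponentials $\{e^{2\pi i k\cdot x}\}_{k\in\Z^n}$ form an orthonormal basis of $L^2(Y;\C)$, so the $\hat u(k+\eta)$ are precisely the Fourier coefficients of $F(\cdot;\eta)$ and $\|F(\cdot;\eta)\|_{L^2(Y)}^2=\sum_{k\in\Z^n}|\hat u(k+\eta)|^2$; integrating in $\eta$ and using Plancherel's theorem on $\R^n$ gives $\int_Z\|F(\cdot;\eta)\|_{L^2(Y)}^2\,d\eta=\|u\|_{L^2(\R^n)}^2$.

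Next, for fixed $\eta\in Z$ I would expand $F(\cdot;\eta)$ in the orthonormal basis $(\Phi_m(\cdot;\eta))_{m\in\N_0}$ of $L^2_\sharp(Y;\C)$ supplied by the Bloch eigenvalue problem: $F(x;\eta)=\sum_{m\in\N_0}c_m(\eta)\Phi_m(x;\eta)$ with $c_m(\eta)=\int_Y F(x;\eta)\overline{\Phi_m(x;\eta)}\,dx$ and $\|F(\cdot;\eta)\|_{L^2(Y)}^2=\sum_{m\in\N_0}|c_m(\eta)|^2$. Plugging this into the representation above yields $u(x)=\int_Z\sum_{m\in\N_0}c_m(\eta)\Phi_m(x;\eta)e^{2\pi i\eta\cdot x}\,d\eta$, which is \eqref{eq:Blochexpansion} provided $c_m(\eta)=\hat u_m(\eta)$. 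For the latter I would expand $\Phi_m(\cdot;\eta)$ itself in Fourier series, $\Phi_m(x;\eta)=\sum_{j\in\Z^n}\varphi_{m,j}(\eta)e^{2\pi i j\cdot x}$; then $c_m(\eta)=\sum_{j\in\Z^n}\hat u(j+\eta)\overline{\varphi_{m,j}(\eta)}$ by orthonormality of the exponentials on $Y$, whereas inserting $\overline{\Phi_m(\cdot;\eta)}=\sum_{j\in\Z^n}\overline{\varphi_{m,j}(\eta)}e^{-2\pi i j\cdot x}$ into \eqref{eq:Blochcoefficient} gives $\int_{\R^n}u(x)e^{-2\pi i\eta\cdot x}\overline{\Phi_m(x;\eta)}\,dx=\sum_{j\in\Z^n}\overline{\varphi_{m,j}(\eta)}\,\hat u(j+\eta)$, the $\R^n$-integral converging absolutely because $u$ is Schwartz and $\Phi_m(\cdot;\eta)$ is periodic. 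The two expressions agree, so $c_m=\hat u_m$, and combining the two Parseval identities with the $\eta$-integration of the first step gives $\|u\|_{L^2(\R^n)}^2=\int_Z\sum_{m\in\N_0}|\hat u_m(\eta)|^2\,d\eta$, i.e.\ \eqref{eq:Parseval}, on $\calS(\R^n;\C)$.

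Finally, by the Parseval identity just proved the linear map $u\mapsto(\hat u_m)_{m\in\N_0}$ is an isometry from $\calS(\R^n;\C)$ into $L^2(Z;\ell^2(\N_0))$; by density it extends uniquely to an isometry on $L^2(\R^n;\C)$, for which \eqref{eq:Parseval} holds by continuity. The right-hand side of \eqref{eq:Blochexpansion} is then a convergent $L^2(\R^n;\C)$-valued integral that equals $u$, since it does so on the dense subspace and both sides depend $L^2$-continuously on $u$. Uniqueness is immediate from the isometry: a coefficient family $(a_m(\eta))_{m\in\N_0}$ producing the zero function satisfies $\int_Z\sum_{m\in\N_0}|a_m(\eta)|^2\,d\eta=0$, hence $a_m\equiv 0$.

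The main obstacle is not the algebra but the measure-theoretic bookkeeping behind the passage from $\R^n$ to the cell $Y$: one must know that the Bloch eigenfunctions can be chosen so that $\eta\mapsto\Phi_m(\cdot;\eta)$, and therefore $\eta\mapsto\hat u_m(\eta)$, is measurable --- the eigenfunctions are pinned down only up to a phase and the eigenvalues may cross --- and that the interchanges of $\sum_{k\in\Z^n}$, $\sum_{m\in\N_0}$ and $\int_Z$ are admissible. This is exactly why one argues first on $\calS(\R^n;\C)$, where absolute convergence makes every interchange legitimate, and then uses the isometry to pass to all of $L^2(\R^n;\C)$. Everything else is a routine combination of Plancherel's theorem on $\R^n$ with Parseval's theorem for the two orthonormal systems $\{e^{2\pi i k\cdot x}\}_{k\in\Z^n}$ and $(\Phi_m(\cdot;\eta))_{m\in\N_0}$ on $L^2(Y;\C)$.
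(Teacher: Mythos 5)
The paper does not prove this proposition: it is stated as a classical result and attributed to the literature (\cite{CV,COV1,DLS,santosa}), so there is no in-paper argument to compare against. Your proof is the standard one found in those references --- unfold the Fourier integral over the Brillouin zone to get the Floquet decomposition $u=\int_Z F(\cdot;\eta)e^{2\pi i\eta\cdot x}\,d\eta$, re-expand $F(\cdot;\eta)$ in the Bloch eigenbasis of $L^2_\sharp(Y;\C)$, identify the coefficients, and combine the two Parseval identities --- and it is essentially correct, including the correct emphasis on working first on $\calS(\R^n;\C)$ and on the measurability of $\eta\mapsto\Phi_m(\cdot;\eta)$. Two small points deserve tightening. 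First, in identifying $c_m(\eta)$ with $\hat u_m(\eta)$, the interchange of $\sum_{j\in\Z^n}$ with $\int_{\R^n}$ is not justified by absolute convergence alone, since the Fourier series of $\Phi_m(\cdot;\eta)\in H^1_\sharp(Y;\C)$ need not converge absolutely; the clean route is to periodize, writing $\int_{\R^n}u(x)e^{-2\pi i\eta\cdot x}\overline{\Phi_m(x;\eta)}\,dx=\int_Y\bigl(\sum_{l\in\Z^n}u(x+l)e^{-2\pi i\eta\cdot(x+l)}\bigr)\overline{\Phi_m(x;\eta)}\,dx=\langle F(\cdot;\eta),\Phi_m(\cdot;\eta)\rangle_{L^2(Y)}=c_m(\eta)$, using Poisson summation and the $Y$-periodicity of $\Phi_m$. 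Second, your uniqueness argument uses Parseval in the reverse direction --- that an \emph{arbitrary} coefficient family $(a_m)\in L^2(Z;\ell^2(\N_0))$ synthesizing to zero has vanishing norm --- which requires the analysis map to be unitary (surjective), not merely isometric; this does hold, because it factors as the composition of the unitary Floquet transform $u\mapsto(\eta\mapsto F(\cdot;\eta))$ from $L^2(\R^n;\C)$ onto $L^2(Z;L^2_\sharp(Y;\C))$ with the fibrewise unitary given by the orthonormal basis $(\Phi_m(\cdot;\eta))_{m\in\N_0}$, but it should be said.
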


Since the coefficient $A^\eps(x)=A\left(\frac{x}{\eps}\right)$ in \eqref{P} is $\eps$-periodic, one needs to introduce rescaled Bloch waves:
\begin{align}
\label{eq:rescaled1}
\Phi_m^\eps(x;\eta)&:=\Phi_m\left(\frac{x}{\eps};\eps \eta\right),\quad\quad\lambda_m^\eps(\eta):=\frac{1}{\eps^2}\lambda_m(\eps \eta),\\
\label{eq:rescaled2}
\omega_m^\eps(x,\eta)&:=\omega_m\left(\frac{x}{\eps};\eps \eta\right)=\Phi_m^\eps(x;\eta)e^{2\pi i\eta\cdot x}=\Phi_m\left(\frac{x}{\eps};\eps \eta\right)e^{2\pi i\eta\cdot x}.
\end{align} 
The scaling is designed in such a way that 
\begin{equation*}
-\nabla\cdot\left(A^\eps(x)\nabla\omega^\eps_m(x;\eta)\right)=\lambda^\eps_m(\eta)\omega^\eps_m(x;\eta).
\end{equation*}
A simple calculation provides the following rescaled version of the classical Bloch expansion. 

\begin{proposition}[Rescaled Bloch expansion] 
\label{thm:Blochentwrescaled}
Every  $u\in L^2(\R^n;\mathbb{C})$ admits a unique expansion
\begin{align}
\label{eq:RescaledBlochExpansion}
u(x)&=\int_{Z/\eps}\sum_{m\in\N_0} \hat u^\eps_m(\eta)\omega_m^\eps(x;\eta)\,d\eta,\\
\label{eq:RescaledBlochCoefficient}
\hat u_m^\eps(\eta)&=\int_{\R^n}u(x)e^{-2\pi i \eta\cdot x}\overline{\Phi^\eps_m(x;\eta)}\,dx=\int_{\R^n}u(x)\overline{\omega_m^\eps(x;\eta)}\,dx.
\end{align}
Furthermore Parseval's identity holds
\begin{equation}
\label{eq:Plancherel}
\int_{\R^n}|u(x)|^2\,dx=\int_{Z/\eps}\sum_{m\in \N_0}|\hat u_m^\eps(\eta)|^2\,d\eta.
\end{equation}
\end{proposition}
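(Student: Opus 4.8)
The plan is to deduce the entire statement from the unrescaled Bloch expansion of Proposition~\ref{thm:Blochentw} by the substitution $y=x/\eps$, combined with the dilation $\eta\mapsto\eps\eta$ in the dual variable. The point is that, by the very definitions \eqref{eq:rescaled1}--\eqref{eq:rescaled2}, the rescaled quasi-periodic Bloch waves are exactly the pullbacks $\omega^\eps_m(x;\eta)=\omega_m(x/\eps;\eps\eta)$ of the unrescaled ones, and likewise $\Phi^\eps_m(x;\eta)=\Phi_m(x/\eps;\eps\eta)$; so no new spectral information is required and the argument is a pure change-of-variables computation.

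First I would fix $u\in L^2(\R^n;\C)$ and set $v(y):=u(\eps y)$, which again belongs to $L^2(\R^n;\C)$ since $\|v\|_{L^2}^2=\eps^{-n}\|u\|_{L^2}^2$. Applying Proposition~\ref{thm:Blochentw} to $v$ gives
\begin{equation*}
v(y)=\int_Z\sum_{m\in\N_0}\hat v_m(\eta)\,\omega_m(y;\eta)\,d\eta,\qquad
\hat v_m(\eta)=\int_{\R^n}v(y)\,\overline{\omega_m(y;\eta)}\,dy,
\end{equation*}
with the series-integral converging in $L^2(\R^n)$. Substituting $y=x/\eps$ and then $\eta=\eps\tilde\eta$ (so that $\eta\in Z$ corresponds to $\tilde\eta\in Z/\eps$ and $d\eta=\eps^{n}\,d\tilde\eta$), and using $\omega_m(x/\eps;\eps\tilde\eta)=\omega^\eps_m(x;\tilde\eta)$, yields
\begin{equation*}
u(x)=\int_{Z/\eps}\sum_{m\in\N_0}\bigl(\eps^{n}\hat v_m(\eps\tilde\eta)\bigr)\,\omega^\eps_m(x;\tilde\eta)\,d\tilde\eta.
\end{equation*}
Hence \eqref{eq:RescaledBlochExpansion} holds with $\hat u^\eps_m(\tilde\eta):=\eps^{n}\hat v_m(\eps\tilde\eta)$, and running the same two substitutions inside the formula for $\hat v_m$ — where the change $y=x/\eps$ produces a factor $\eps^{-n}$ that cancels the prefactor $\eps^{n}$ — gives precisely \eqref{eq:RescaledBlochCoefficient}, once more via \eqref{eq:rescaled2} and $\Phi^\eps_m(x;\tilde\eta)=\Phi_m(x/\eps;\eps\tilde\eta)$.

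For Parseval's identity I would apply \eqref{eq:Parseval} to $v$: the left-hand side equals $\int_{\R^n}|u(\eps y)|^2\,dy=\eps^{-n}\int_{\R^n}|u(x)|^2\,dx$, while on the right-hand side the dilation $\eta=\eps\tilde\eta$ turns $\int_Z\sum_m|\hat v_m(\eta)|^2\,d\eta$ into $\eps^{-n}\int_{Z/\eps}\sum_m|\hat u^\eps_m(\tilde\eta)|^2\,d\tilde\eta$; cancelling the common factor $\eps^{-n}$ gives \eqref{eq:Plancherel}. Uniqueness is inherited from Proposition~\ref{thm:Blochentw}: if $u=\int_{Z/\eps}\sum_m c_m(\eta)\,\omega^\eps_m(\cdot;\eta)\,d\eta$ for some $(c_m)$ with $\int_{Z/\eps}\sum_m|c_m(\eta)|^2\,d\eta<\infty$, then reversing the two substitutions exhibits a Bloch expansion of $v$ with coefficients $\zeta\mapsto\eps^{-n}c_m(\zeta/\eps)$, which by the uniqueness part of Proposition~\ref{thm:Blochentw} must coincide with $\hat v_m$; therefore $c_m=\hat u^\eps_m$.

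There is no real obstacle here — as the text already signals, the result is essentially bookkeeping. The only points deserving care are the consistent tracking of the Jacobian factors $\eps^{\pm n}$ and of the correspondence $Z\leftrightarrow Z/\eps$ under the dilation, together with the remark that the $L^2$-convergence of the series-integral and the legitimacy of interchanging $\sum_m$ with $\int$ transfer unchanged from Proposition~\ref{thm:Blochentw}, being preserved by the $L^2$-isometry $u\mapsto\eps^{n/2}u(\eps\cdot)$. If one preferred to bypass the auxiliary function $v$, one could instead check directly that the family $\{\eps^{n/2}\omega^\eps_m(\cdot;\eta)\}_{m,\eta}$ inherits the relevant completeness and orthogonality from $\{\omega_m(\cdot;\eta)\}_{m,\eta}$ and read off \eqref{eq:RescaledBlochExpansion}--\eqref{eq:Plancherel}; both routes are equally short.
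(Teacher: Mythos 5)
Your proof is correct, and it is precisely the ``simple calculation'' the paper alludes to but does not write out: the paper states the rescaled expansion as an immediate consequence of Proposition \ref{thm:Blochentw} via the substitutions $y=x/\eps$, $\eta\mapsto\eps\eta$, which is exactly your argument, with the Jacobian factors $\eps^{\pm n}$ tracked correctly and cancelling as they should. No gaps.
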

In what follows, we will frequently exploit that due to the eigenvalue property of $\omega_m^\eps$ the operator 
$-\nabla\cdot\left(A^\eps\nabla\right)$ acts on the rescaled Bloch expansion 
as a multiplier. More precisely, for $u$ as in \eqref{eq:RescaledBlochExpansion} one has  
\begin{equation}
\label{eq:multiplierexpansionBloch}
-\nabla\cdot\left(A^\eps(x)\nabla u(x)\right) + u(x)=
\int_{Z/\eps}\sum_{m\in\N_0} (1+\lambda_m^\eps(\eta))\hat u^\eps_{m}(\eta)\omega_m^\eps(x;\eta)\,d\eta
\end{equation}
provided $-\nabla\cdot\left(A^\eps\nabla u\right)\in L^2(\R^n)$. 

It is well known that in the homogenization process only the lowest eigenvalue, the ground state $\lambda_0(\eta)$, is relevant. 
By classical perturbation theory it can be shown that $\lambda_0(\eta)$ is simple and analytic in a neighbourhood of $\eta=0$. Moreover, one immediately finds that $\lambda_0(0)=0$ and 
that $\lambda_0$ is even, wherefore all odd derivatives of $\lambda_0$ vanish. Summing up, one obtains the following Taylor expansion:

\begin{lemma}[Taylor expansion of the first Bloch eigenvalue]  
\label{lem:expansionTaylorlambda0}
For every $k\in\N$ there exists a $2k$-th order tensor $\mathbb{A}^*_{2k}=(\mathbb{A}^*_{2k})_{i_1,\dots ,i_{2k}}$ such that for $M\in\N$ 
\begin{equation}
\label{eq:expansionTaylorlambda0}
\lambda_0(\eta)=(2\pi)^2\mathbb{A}^*_2\cdot\eta^{\otimes 2} + (2\pi)^4\mathbb{A}^*_4\cdot\eta^{\otimes 4}+\dots+(2\pi)^{2M}\mathbb{A}^*_{2M}\cdot\eta^{\otimes 2M} + O\left(|\eta|^{2M+2}\right),
\end{equation} 
where 
\begin{equation*}
\mathbb{A}^*_{2k}\cdot\eta^{\otimes 2k}:=\sum_{i_1,\dots,i_{2k}=1}^n (\mathbb{A}^*_{2k})_{i_1,\dots,i_{2k}}\eta_{i_1}\cdot...\cdot\eta_{i_{2k}}. 
\end{equation*}
\end{lemma}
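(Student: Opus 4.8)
The argument combines three facts about the ground eigenvalue $\lambda_0$: its analyticity near the origin, the normalisation $\lambda_0(0)=0$, and its evenness.

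First, as recalled just before the statement, $\eta\mapsto\mathcal{L}(\eta)$ is an analytic family of self-adjoint operators on $L^2_\sharp(Y;\C)$ with compact resolvent, and $\mathcal{L}(0)=-\nabla\cdot(A(\cdot)\nabla)$. Uniform ellipticity of $A$ gives $\langle\mathcal{L}(0)w,w\rangle=\int_Y\nabla w\cdot A\nabla w\ge c\int_Y|\nabla w|^2$ for all $w\in H^1_\sharp(Y;\C)$ and some $c>0$, so the kernel of $\mathcal{L}(0)$ is spanned by the constants; hence $\lambda_0(0)=0$ is a simple eigenvalue at the bottom of the spectrum, separated from the rest by a gap. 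Classical analytic perturbation theory then shows that $\lambda_0$ stays simple and depends analytically on $\eta$ in a neighbourhood $\mathcal{U}$ of $0$, and there it admits a convergent Taylor expansion
\begin{equation*}
\lambda_0(\eta)=\sum_{j\ge0}P_j(\eta),\qquad P_j(\eta):=\sum_{|\alpha|=j}\frac{\partial^\alpha\lambda_0(0)}{\alpha!}\,\eta^\alpha,
\end{equation*}
where each $P_j$ is a homogeneous polynomial of degree $j$, that is, a symmetric $j$-tensor contracted with $\eta^{\otimes j}$. In particular $P_0=\lambda_0(0)=0$.

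Second, $\lambda_0$ is even. If $\Phi_0(\cdot;\eta)\in H^1_\sharp(Y;\C)\setminus\{0\}$ satisfies $\mathcal{L}(\eta)\Phi_0(\cdot;\eta)=\lambda_0(\eta)\Phi_0(\cdot;\eta)$, then taking complex conjugates and using that $A$ is real-valued and $\lambda_0(\eta)\in\R$ yields
\begin{equation*}
\mathcal{L}(-\eta)\,\overline{\Phi_0(\cdot;\eta)}=-(\nabla-2\pi i\eta)\cdot\big(A(\cdot)(\nabla-2\pi i\eta)\overline{\Phi_0(\cdot;\eta)}\big)=\lambda_0(\eta)\,\overline{\Phi_0(\cdot;\eta)}.
\end{equation*}
Thus $\lambda_0(\eta)$ is an eigenvalue of $\mathcal{L}(-\eta)$, and since $\lambda_0(-\eta)$ is the smallest eigenvalue of $\mathcal{L}(-\eta)$ we get $\lambda_0(-\eta)\le\lambda_0(\eta)$; exchanging $\eta$ with $-\eta$ gives the reverse inequality, so $\lambda_0(-\eta)=\lambda_0(\eta)$ for all $\eta\in Z$.

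Finally, since $P_j(-\eta)=(-1)^jP_j(\eta)$, matching homogeneous components in the identity $\lambda_0(-\eta)=\lambda_0(\eta)$ forces $P_j\equiv0$ for every odd $j$. Only even-degree terms remain, so for $k\in\N$ we may define the $2k$-th order tensor $\mathbb{A}^*_{2k}$ through $P_{2k}(\eta)=:(2\pi)^{2k}\,\mathbb{A}^*_{2k}\cdot\eta^{\otimes2k}$; for $k=1$ the standard first- and second-order perturbation identities for $\lambda_0$ at $\eta=0$ identify $\mathbb{A}^*_2$ with the classical homogenized matrix $A^{\mathrm{eff}}$. Truncating the convergent series after the index $j=2M$ leaves the tail $\sum_{j\ge2M+1}P_j(\eta)$; since $P_{2M+1}\equiv0$, this tail is $O(|\eta|^{2M+2})$ as $\eta\to0$, which is exactly \eqref{eq:expansionTaylorlambda0}. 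The only non-elementary ingredient is the analytic perturbation input of the first step; everything afterwards is bookkeeping with homogeneous Taylor polynomials together with the reality of $A$. I expect the main subtlety to be one of care rather than of substance: the improvement of the remainder from $O(|\eta|^{2M+1})$ to $O(|\eta|^{2M+2})$ uses evenness a second time (to discard the vanishing degree-$(2M+1)$ term), and one must ensure that the analytic branch $\eta\mapsto\lambda_0(\eta)$ is unambiguously singled out, which is precisely what simplicity of $\lambda_0(0)$ guarantees.
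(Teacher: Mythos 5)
Your proof is correct and follows exactly the route the paper takes: the paper does not write out a formal proof but sketches precisely this argument in the paragraph preceding the lemma (simplicity and analyticity of $\lambda_0$ near $\eta=0$ by classical perturbation theory, $\lambda_0(0)=0$, evenness of $\lambda_0$, hence vanishing of all odd-order Taylor terms). Your write-up merely fills in the standard details (ellipticity giving the spectral gap at $\eta=0$, the conjugation argument for evenness, and the bookkeeping that upgrades the remainder to $O(|\eta|^{2M+2})$), all of which are sound.
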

It is a well-known fact in the Bloch theory that the second order tensor $\mathbb{A}^*_{2}\in\R^{n\times n}$ coincides with the effective coefficient $A^{\text{eff}}$ 
from the classical homogenization theory. As shown, for instance in \cite{ABV} and \cite{DLS2}, the tensors $\mathbb{A}^*_{2k}$ can be computed from periodic cell problems. 
The above expansion   plays a major role in our analysis. 

\subsection{Assumptions}
\label{sec:assumptions}
In this section, we introduce the assumptions for the given data involved  in \eqref{P}, namely, the admissible control set $U^\eps_{ad}\subset L^2(\R^n)$ and 
the functions $f^\eps, y^\eps_{d,1}$, and $y^\eps_{d,2}$. Roughly speaking, we have to demand that the data are well prepared to the periodic microstructure. 

We recall that solutions to homogenization problems are characterized by some effective profile, 
the homogenization limit, and fast lower order oscillations, which adapt the effective profile to the periodic microstructure. These oscillatory corrections can 
be described by periodic cell solutions, the so-called correctors, which are necessary to construct high-order approximations as in \eqref{eq:highorderapp1}.   
In fact, there is an analogue to this adaption process in the Bloch wave approach. As shown in \cite{CV,COV1, DLS}, in a classical homogenization process only the lowest Bloch 
eigenvalue $\lambda^\eps_0$ and the corresponding eigenfunction $\omega^\eps_0$ play a role, while higher Bloch modes are negligible. 
Moreover, it can be shown that the rescaled Bloch eigenfunction $\omega^\eps_0$ is in fact 
an adaption of the Fourier basis function $e^{2\pi i\eta\cdot x}$ to the microstructure (see     \cite{ABV}). In view of this, the following definition is reasonable.
\begin{definition}[Adaption]\label{def:adap}
Let $f\in L^2(\R^n;\C)$ with the Fourier-transform $\hat f$. Then, the adaption $\mathcal{A}^\eps(f)$ of $f$ is defined as follows:
\begin{equation*}
\mathcal{A}^\eps(f):=\int_{Z/\eps}\hat f(\eta)\omega_0^\eps(\cdot;\eta)\,d\eta.
\end{equation*}
\end{definition}
\begin{remark}
In \cite{COV1} $\mathcal{A}^\eps$ is referred to as the Bloch approximation. It is shown that $\mathcal{A}^\eps(f)$ converges almost everywhere to $f$ and, assuming that $f$ is sufficiently smooth with $\hat f$ decaying sufficiently fast at infinity, 
a higher-order asymptotic expansion of $\mathcal{A}^\eps(f)$ holds
\begin{equation}
\label{eq:approxadaption}
\mathcal{A}^\eps(f)(x)=f(x) + \eps\chi^{(1)}\left(\frac{x}{\eps}\right)\cdot\nabla f(x) + \eps^2\chi^{(2)}\left(\frac{x}{\eps}\right)\cdot D^2 f(x) + O(\eps^3),
\end{equation} 
where $\chi^{(1)}, \chi^{(2)}$ are $Y$-periodic corrector functions. 
In particular, the adaption $\mathcal{A}^\eps$ transforms a smooth, non-oscillatory function $f$ to an $\eps$-oscillatory function according to the periodic microstructure. 
\end{remark}

In our approach we will use the fact that the adaption operator $\mathcal{A}^\eps$ preserves the $L^2$-norm when applied to functions that have compact support in Fourier space. 
\begin{lemma}
\label{lem:equalityL2norm}
Let $f\in L^2(\R^n;\C)$ have compact support in Fourier space, i.e., $\supp(\hat f)\subset K$ with $K\subset\R^n$ compact. Let $\eps>0$ be such that $K\subset Z/\eps$. Then 
\begin{equation*}
 \|\mathcal{A}^\eps(f)\|_{L^2(\R^n;\C)}=\|f\|_{L^2(\R^n;\C)}.
\end{equation*}
\end{lemma}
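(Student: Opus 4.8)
The plan is to use the rescaled Bloch expansion together with Parseval's identity \eqref{eq:Plancherel} and the definition of the adaption. Write $f$ via its Fourier transform and compare it termwise with $\mathcal{A}^\eps(f)$. The key observation is that the adaption $\mathcal{A}^\eps(f)$ is \emph{already} written in rescaled Bloch form: by Definition \ref{def:adap},
\begin{equation*}
\mathcal{A}^\eps(f)=\int_{Z/\eps}\hat f(\eta)\,\omega_0^\eps(\cdot;\eta)\,d\eta
=\int_{Z/\eps}\sum_{m\in\N_0}\big(\delta_{m,0}\,\hat f(\eta)\big)\,\omega_m^\eps(\cdot;\eta)\,d\eta,
\end{equation*}
so its Bloch coefficients are $\widehat{(\mathcal{A}^\eps(f))}^{\,\eps}_m(\eta)=\hat f(\eta)$ for $m=0$ and $0$ for $m\geq 1$, provided the integral over $Z/\eps$ makes sense; this is where $\supp(\hat f)\subset K\subset Z/\eps$ is used, since then the integrand is supported in $Z/\eps$ and the uniqueness statement in Proposition \ref{thm:Blochentwrescaled} applies. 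Applying Parseval's identity \eqref{eq:Plancherel} to $u=\mathcal{A}^\eps(f)$ yields
\begin{equation*}
\|\mathcal{A}^\eps(f)\|_{L^2(\R^n;\C)}^2
=\int_{Z/\eps}\sum_{m\in\N_0}\big|\widehat{(\mathcal{A}^\eps(f))}^{\,\eps}_m(\eta)\big|^2\,d\eta
=\int_{Z/\eps}|\hat f(\eta)|^2\,d\eta
=\int_{K}|\hat f(\eta)|^2\,d\eta.
\end{equation*}

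It then remains to identify the right-hand side with $\|f\|_{L^2(\R^n;\C)}^2$. By the classical Plancherel theorem for the Fourier transform, $\|f\|_{L^2(\R^n;\C)}^2=\int_{\R^n}|\hat f(\eta)|^2\,d\eta$, and since $\hat f$ vanishes outside $K\subset Z/\eps$, this integral equals $\int_{K}|\hat f(\eta)|^2\,d\eta$. Combining the two displays gives $\|\mathcal{A}^\eps(f)\|_{L^2(\R^n;\C)}=\|f\|_{L^2(\R^n;\C)}$, as claimed.

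The only genuinely delicate point is to justify rigorously that the Bloch coefficients of $\mathcal{A}^\eps(f)$ are exactly $(\hat f,0,0,\dots)$, i.e., that Definition \ref{def:adap} really does express $\mathcal{A}^\eps(f)$ in the normal form \eqref{eq:RescaledBlochExpansion} to which the uniqueness and Parseval statements of Proposition \ref{thm:Blochentwrescaled} apply. This uses the support condition in two ways: first, to guarantee that $\hat f$ restricted to $Z/\eps$ coincides with $\hat f$ on all of $\R^n$ so that no information is lost; and second, to ensure the resulting function $\eta\mapsto(\delta_{m,0}\hat f(\eta))_{m\in\N_0}$ lies in the correct space $L^2(Z/\eps;\ell^2)$, which holds since $\hat f\in L^2$. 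Once this bookkeeping is in place, the equality of norms is immediate from \eqref{eq:Plancherel} and the classical Plancherel identity. Everything else is routine, so I expect no serious obstacle beyond making the identification of Bloch coefficients precise.
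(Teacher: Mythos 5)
Your proof is correct and follows essentially the same route as the paper's: the paper also chains the classical Plancherel identity, the support condition $\supp(\hat f)\subset K\subset Z/\eps$, and the Bloch Parseval identity \eqref{eq:Plancherel} applied to $\mathcal{A}^\eps(f)$ with Bloch coefficients $(\hat f,0,0,\dots)$. You simply spell out the identification of the Bloch coefficients more explicitly than the paper's one-line argument.
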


\begin{proof}
Let $\eps>0$ be such that $K\subset Z/\eps$. Then 
\begin{equation*}
\|f\|^2_{L^2(\R^n;\C)}=\int_{\R^n}|\hat f(\eta)|^2\,d\eta=\int_{Z/\eps}|\hat f(\eta)|^2\,d\eta=\|\mathcal{A}^\eps(f)\|^2_{L^2(\R^n;\C)},
\end{equation*}     
where in the second equality we exploited that $\hat f$ is supported on $K\subset Z/\eps$ and in the last equality we used Parseval's identity \eqref{eq:Plancherel} and the definition 
of $\mathcal{A}^\eps$. 
\end{proof}
Let us now state the main assumptions for the given data and    the admissible set $U^\eps_{ad}$ involved in \eqref{P}:
\begin{assumption}
\label{ass:data} 
There exist  $f,y_{d,1},y_{d,2}\in L^2(\R^n)$  and a compact set $K\subset \R^n$ such that 
\begin{align}
\begin{split}
\label{fudwellprepared}
&f^\eps=\mathcal{A}^\eps(f),\quad y^\eps_{d,1}=\mathcal{A}^\eps(y_{d,1})\quad\text{and }y^\eps_{d,2}=\mathcal{A}^\eps (y_{d,2}),\\
&\supp(\hat f),\, \supp(\hat y_{d,1}),\, \supp(\hat y_{d,2})\subset K.
\end{split}
\end{align}
Moreover,  the compact set $K$ is assumed to be symmetric in the sense that 
\begin{equation} \label{ass:sym}
\eta\in K\,\Rightarrow -\eta\in K.
\end{equation}
\end{assumption}

\begin{remark}
\begin{itemize}
\item[]
\item[(i)] The condition that the Fourier transforms  $\hat f, \hat y_{d,1}, \hat y_{d,2}$ have compact support implies readily  that  $f,y_{d,1}, y_{d,2}\in H^k(\R^n)$ for all $k\in \N$. 
Therefore, along with \eqref{ass:sym}, it follows that their adaptions $f^\eps,y^\eps_{d,1},y^\eps_{d,2}$ are real-valued and belong to $H^k(\R^n)$ for all $k\in \N$, cf.  \cite{CV,COV1}. 
Also, notice that  Assumption \ref{ass:data} demands that $f^\eps, y^\eps_{d,1}$ and $y^\eps_{d,2}$ are highly oscillatory according to the micro-structure. 
Actually this assumption is quite natural as solutions to $\eps$-periodic coefficient equations such as the state equation in \eqref{P} 
typically exhibit a highly oscillatory behavior.  
\item[(ii)]
Relation \eqref{eq:approxadaption} suggests that the difference $\mathcal{A^\eps}(f)-f$ is typically of order $\eps$. For unprepared data, i.e.,
$y^\eps_{d,1}=y_{d,1}, y^\eps_{d,2}=y_{d,2}$, and $f^\eps=f$, the effective model from Definition \ref{def:highordereffectiveproblem} below is therefore expected to approximate 
the original problem only up to errors of order $\eps$.  In fact, well-prepared data are necessary for many corrector results in the homogenization theory. See, for instance, 
\cite{ALR,BG, BFM} in the context of periodic wave equations. 
\end{itemize}
\end{remark}
We now state the assumption on the admissible set $U^\eps_{ad}$. 
\begin{assumption}
\label{ass:admissibleset} Let $K\subset \R^n$ be as in Assumption \ref{ass:data}.
For the admissible set $U^\eps_{ad}\subset L^2(\R^n)$, we assume the following:
\begin{itemize}
 \item[(i)] $U^\eps_{ad}$ is  convex and  closed.  
 \item[(ii)] There exists $u_0\in L^2(\R^n)$ such that for every $\eps>0$ one has $\mathcal{A}^\eps(u_0)\in U^\eps_{ad}$ .  
 \item[(iii)] For every  $u\in U^\eps_{ad}$ with 
Bloch expansion
\begin{equation*}
u(x)=\int_{Z/\eps}\sum_{m\in\N_0} \hat u^\eps_m(\eta)\omega_m^\eps(x;\eta)\,d\eta,
\end{equation*}
it follows that the projection
\begin{equation}
\label{eq:restrictionK}
u_K(x):=\int_K\hat u^\eps_0(\eta)\omega_0^\eps(x;\eta)\,d\eta
\end{equation}
lies as well in $U^\eps_{ad}$. In other words, $(U^\eps_{ad})_K:= \{ u_K\ | \ u  \in  U^\eps_{ad} \} \subset U^\eps_{ad}.$
\end{itemize}
\end{assumption}
\begin{remark}  The second condition $(ii)$ guarantees that      $(u_\eps^*)_{\epsilon>0} \subset  L^2(\R^n)$   is  uniformly bounded  as shown in  Lemma \ref{lem:assumption2}. 
The third condition $(iii)$ demands that for every $\eps>0$ the projection $(U^\eps_{ad})_K$ of $U^\eps_{ad}$ in Bloch space to the lowest mode $m=0$ and 
$\eta\in K$ is a subset of $U^\eps_{ad}$.  
\end{remark}

For the rest of the article, $\eps>0$ is always assumed to be sufficiently small such that $K\subset Z/\eps$.   Furthermore      $S_\epsilon: L^2(\R^n) \to H^1(\R^n)$ denotes the control-to-state operator associated with \eqref{P}.

\begin{lemma}
\label{lem:assumption2}
 Let Assumptions \ref{ass:data} and  \ref{ass:admissibleset} hold. Then,     $(u^*_\eps)_{\eps>0} \subset L^2(\R^n)$  
 is  uniformly  bounded, i.e., there exists a     constant $C>0$, independent of $\eps$, such that 
 \begin{equation*}
  \|u^*_\eps\|_{L^2(\R^n)}    \leq C \quad \forall \epsilon >0.
 \end{equation*} 
\end{lemma}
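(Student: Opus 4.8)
The plan is to test the optimality of $(u^*_\eps,y^*_\eps)$ against a fixed, $\eps$-independent competitor and to bound the resulting right-hand side uniformly in $\eps$. By Assumption~\ref{ass:admissibleset}(ii) the adapted control $\mathcal{A}^\eps(u_0)$ lies in $U^\eps_{ad}$ for every $\eps>0$; writing $y^\eps_0:=S_\eps(\mathcal{A}^\eps(u_0))$ for the associated state and using that each term of $J^\eps$ is nonnegative (since $\mu_1,\mu_2\ge 0$, $\kappa>0$, and $E^\eps\ge 0$ by ellipticity of $A^\eps$), optimality gives
\[
\frac{\kappa}{2}\,\|u^*_\eps\|_{L^2(\R^n)}^2\;\le\;J^\eps(u^*_\eps,y^*_\eps)\;\le\;J^\eps\bigl(\mathcal{A}^\eps(u_0),\,y^\eps_0\bigr).
\]
It therefore suffices to show that $J^\eps(\mathcal{A}^\eps(u_0),y^\eps_0)$ is bounded independently of $\eps$.

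The first ingredient is the $L^2$-contractivity of the adaption operator. For any $g\in L^2(\R^n;\C)$, uniqueness of the rescaled Bloch expansion in Proposition~\ref{thm:Blochentwrescaled} identifies the Bloch coefficients of $\mathcal{A}^\eps(g)$ as $\hat g(\eta)$ in the mode $m=0$ for $\eta\in Z/\eps$ and $0$ otherwise; hence Parseval's identity \eqref{eq:Plancherel} together with the classical Plancherel theorem yields
\[
\|\mathcal{A}^\eps(g)\|_{L^2(\R^n)}^2=\int_{Z/\eps}|\hat g(\eta)|^2\,d\eta\;\le\;\int_{\R^n}|\hat g(\eta)|^2\,d\eta=\|g\|_{L^2(\R^n)}^2,
\]
with equality once $\supp\hat g\subset Z/\eps$ (that is Lemma~\ref{lem:equalityL2norm}). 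In particular $\|f^\eps\|_{L^2}\le\|f\|_{L^2}$, $\|y^\eps_{d,2}\|_{L^2}\le\|y_{d,2}\|_{L^2}$ and $\|\mathcal{A}^\eps(u_0)\|_{L^2}\le\|u_0\|_{L^2}$, all uniformly in $\eps$.

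The second ingredient is the standard energy estimate for the state equation: testing $-\nabla\cdot(A^\eps\nabla y^\eps_0)+y^\eps_0=f^\eps+\mathcal{A}^\eps(u_0)$ with $y^\eps_0$ and using the uniform ellipticity and boundedness of $A^\eps$ gives $\|y^\eps_0\|_{H^1(\R^n)}\le C\|f^\eps+\mathcal{A}^\eps(u_0)\|_{L^2(\R^n)}\le C$ with $C$ independent of $\eps$. Feeding these bounds into $J^\eps$ and using $E^\eps(v)\le C\|v\|_{H^1(\R^n)}^2$ together with $E^\eps(y^\eps_0-y^\eps_{d,1})\le 2E^\eps(y^\eps_0)+2E^\eps(y^\eps_{d,1})$, the only quantity not yet under control is $\|y^\eps_{d,1}\|_{H^1(\R^n)}=\|\mathcal{A}^\eps(y_{d,1})\|_{H^1(\R^n)}$, so everything reduces to a uniform $H^1$-bound for the adapted desired gradient field.

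This last estimate is the main obstacle, because $y^\eps_{d,1}$ is genuinely $\eps$-oscillatory: differentiating $\omega_0^\eps(x;\eta)=\Phi_0(x/\eps;\eps\eta)e^{2\pi i\eta\cdot x}$ produces a formally $O(\eps^{-1})$ contribution $\eps^{-1}(\nabla_x\Phi_0)(x/\eps;\eps\eta)$. The resolution uses Assumption~\ref{ass:data}: $\hat y_{d,1}$ is supported in the fixed compact set $K\subset Z/\eps$, so only a bounded range of Bloch parameters $\theta=\eps\eta$ enters, and on that range the ground state $\Phi_0(\cdot;\theta)$ is analytic in $\theta$ with $\Phi_0(\cdot;0)\equiv 1$, whence $\nabla_x\Phi_0(\cdot;\eps\eta)=O(\eps|\eta|)$ cancels the singular prefactor; equivalently, one may invoke the corrector expansion \eqref{eq:approxadaption} together with the $L^\infty$-bounds on the $Y$-periodic correctors (cf.\ \cite{CV,COV1}). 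Either way one obtains $\|y^\eps_{d,1}\|_{H^1(\R^n)}\le C$ uniformly, hence $J^\eps(\mathcal{A}^\eps(u_0),y^\eps_0)\le C$, and the displayed chain of inequalities yields the claimed bound $\|u^*_\eps\|_{L^2(\R^n)}\le C$.
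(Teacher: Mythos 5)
Your proof is correct, and its opening move is exactly the paper's: compare with the admissible competitor $\mathcal{A}^\eps(u_0)$ from Assumption \ref{ass:admissibleset}(ii) to get $\frac{\kappa}{2}\|u^*_\eps\|^2_{L^2(\R^n)}\le J^\eps(\mathcal{A}^\eps(u_0),y^\eps_0)$, which is \eqref{eq:L2estimatecontrols}. Where you diverge is in how the right-hand side is bounded. The paper stays entirely in Bloch space: it inserts the exact representation of $J^\eps$ from Lemma \ref{lem:EnergyinBlochform} (the Bloch coefficients of $\mathcal{A}^\eps(u_0)$ being $\hat u_0$ in mode $m=0$ and zero otherwise), uses $\lambda_0^\eps\ge 0$ to discard denominators, and then applies Parseval \eqref{eq:Plancherel} together with the single nontrivial fact that $\int_K|\hat y_{d,1}|^2(1+\lambda_0^\eps)\,d\eta\lesssim\|y_{d,1}\|^2_{H^1}$ (cited from the Bloch-approximation literature). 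You instead work in physical space: $L^2$-contractivity of the adaption, the standard energy estimate for the state equation to control $\|y^\eps_0\|_{H^1}$, and the quadratic-form inequality $E^\eps(a-b)\le 2E^\eps(a)+2E^\eps(b)$, which reduces everything to a uniform $H^1$-bound on $y^\eps_{d,1}=\mathcal{A}^\eps(y_{d,1})$. You correctly identify this as the crux and resolve it via the analyticity of the ground state $\Phi_0(\cdot;\theta)$ at $\theta=0$ with $\Phi_0(\cdot;0)\equiv 1$ (equivalently, via \eqref{eq:approxadaption}); this is the same underlying fact the paper invokes, since in Bloch variables the $A^\eps$-energy of $\mathcal{A}^\eps(y_{d,1})$ equals $\int_K\lambda_0^\eps(\eta)|\hat y_{d,1}(\eta)|^2\,d\eta$ and $\lambda_0^\eps(\eta)=\eps^{-2}\lambda_0(\eps\eta)\lesssim|\eta|^2$ on $K$. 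The trade-off: the paper's route is shorter and self-contained once Lemma \ref{lem:EnergyinBlochform} is available (it never needs the energy estimate for the state equation, since the state is solved explicitly in Bloch space), whereas your route is more elementary and does not presuppose the Bloch representation of the cost functional, at the price of having to justify the uniform $H^1$-bound for the adapted datum separately; to make that last step fully rigorous you should either pass through the Bloch--Parseval identity for the energy as above or be explicit about the norm in which the remainder of \eqref{eq:approxadaption} and the corrector gradients are controlled. No genuine gap, only that one point deserves the extra line.
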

\begin{proof}According to Assumption \ref{ass:admissibleset}, there exists 
an element $u_0\in L^2(\R^n)$   such that $\mathcal{A}^\eps(u_0)\in U^\eps_{ad}$.    For this reason,
\begin{equation}
\label{eq:L2estimatecontrols}
 \frac{\kappa}{2}\|u^*_\eps\|^2_{L^2(\R^n)}\leq J^\eps(u^*_\eps, S_\epsilon u_\eps^*)\leq J^\eps(\mathcal{A}^\eps(u_0),\underbrace{S_\epsilon\mathcal{A}^\eps(u_0)}_{=: y^\eps_0}) \quad \forall \epsilon >0. 
\end{equation}
Our aim is to find a uniform bound for the right hand side of \eqref{eq:L2estimatecontrols}. 

Let $\hat u_0$ be the Fourier transform of $u_0$. By construction of $\mathcal{A^\eps}$ (Definition \ref{def:adap}), Proposition \ref{thm:Blochentwrescaled} yields that the lowest-order Bloch-coefficient ($m=0$) of $\mathcal{A}^\eps(u_0)$ 
corresponds to the Fourier transform $\hat u_0$, while all higher-order Bloch modes with $m\geq 1$ vanish. In the following, let $C>0$ denote a  $\eps$-independent generic constant
that can vary from line to line.
By Lemma \ref{lem:EnergyinBlochform} below, the right-hand side in    \eqref{eq:L2estimatecontrols} can be written as
{\small\begin{align*}
J^\eps(\mathcal{A}^\eps(u_0),y^\eps_0)= \frac{\mu_1}{2} \int_{K}\left|\frac{\hat f(\eta) + \hat u_{0}(\eta)}{1+\lambda_0^\eps(\eta)}-\hat y_{d,1}(\eta)\right|^2\!\!\!\!(1+\lambda^\eps_0(\eta))\,d\eta
+ \frac{\mu_1}{2}\int_{(Z/\eps)\setminus K}\frac{| \hat u_{0}(\eta)|^2}{1+\lambda_0^\eps(\eta)}\,d\eta\\
+\frac{\mu_2}{2} \int_{K}\left|\frac{\hat f(\eta) + \hat u_{0}(\eta)}{1+\lambda_0^\eps(\eta)}-\hat y_{d,2}(\eta)\right|^2\,d\eta
+\frac{\mu_2}{2} \int_{(Z/\eps)\setminus K}\frac{|\hat u_{0}(\eta)|^2}{(1+\lambda_0^\eps(\eta))^2}\,d\eta +  \frac{\kappa}{2}\int_{Z/\eps} |\hat u_0(\eta)|^2\,d\eta, \\
\leq C\left(\int_{Z/\eps} |\hat u_0(\eta)|^2\,d\eta + \int_K |\hat f(\eta)|^2+|\hat y_{d,1}(\eta)|^2(1+\lambda_0^\eps(\eta))+|\hat y_{d,2}(\eta)|^2\,d\eta\right)\\
\leq C\left(\|u_0\|^2_{L^2(\R^n)} + \|f\|^2_{L^2(\R^n)} + \|y_{d,1}\|^2_{H^1(\R^n)}+ \|y_{d,2}\|^2_{L^2(\R^n)}\right),
\end{align*}
}
where in the first inequality we used that $\lambda_0^\eps\geq 0$. In the second inequality we used  Parseval's identity \eqref{eq:Plancherel} and exploited the fact that $\int_K|\hat y_{d,1}(\eta)|^2(1+\lambda_0^\eps(\eta))\,d\eta$ is 
controlled by the $H^1$-norm of $y_{d,1}$ (see \cite{COV1} for details). 
\end{proof}

\begin{example}	
\label{ex:admissiblesets}  Let Assumption \ref{ass:data} hold. Then 
the following admissible control sets satisfy Assumption \ref{ass:admissibleset}.   
\begin{itemize}
\item[(i)] Full space $U^\eps_{ad}=L^2(\R^n)$. 
\item[(ii)] Bounds on the $L^2$-norm of the control: For $L>0$ let
\begin{equation*}
U^\eps_{ad}:=\{u\in L^2(\R^n)\,|\,
\|u\|_{L^2(\R^n)}\leq L\}.
\end{equation*}
It is obvious that  $U^\eps_{ad} \subset  L^2(\R^n)$ is convex and closed.  The condition (ii) of Assumption \ref{ass:admissibleset} is satisfied with $u_0=0$. The third condition   is a direct consequence of   \eqref{eq:restrictionK} and Parseval's identity \eqref{eq:Plancherel}.  \item[(iii)] Bounds on the $L^2$-norm of the state: For $L>0$ let   
\begin{equation*}
U^\eps_{ad}:=\left\{u\in L^2(\R^n)\,\Big|\,\|S_\epsilon u\|_{L^2(\R^n)}\leq L\right\}.
\end{equation*}
As the control-to-state operator $S_\epsilon: L^2(\R^n) \to H^1(\R^n)$ is affine linear and continuous, the set  $U^\eps_{ad} \subset  L^2(\R^n)$ is convex and closed. 
The condition  (ii) of Assumption \ref{ass:admissibleset} is satisfied with $u_0=-f$ since according to Assumption \ref{ass:data} 
\begin{equation}\label{exm3}
S_\epsilon \left(\mathcal{A^\eps}(-f)\right)= S_\epsilon(- f^\eps)  = 0 \quad \Rightarrow \quad \mathcal{A^\eps}(-f)\in U^\eps_{ad}  \quad \forall   \epsilon >0.
\end{equation}
The third condition of Assumption \ref{ass:admissibleset}  follows from the formula \eqref{eq:expansionyeps} and Parseval's identity \eqref{eq:Plancherel}  implying      $ \|S_\epsilon u_K\|_{L^2(\R^n)} \le \|S_\epsilon u\|_{L^2(\R^n)} \le L$ for all $u \in U_{ad}$.

\item[(iv)] Bounds on the energy of the state: For $L>0$ let  
\begin{equation*}
U^\eps_{ad}:=\left\{u\in L^2(\R^n)\,\Big|E^\eps(S_\epsilon u)\leq L\right\}.
\end{equation*}
Since by \eqref{energyfunk} the energy functional $E^\eps: H^1(\mathbb R^n) \to \R$  defines a norm on $H^1(\mathbb R^n)$, the set  $U^\eps_{ad} \subset  L^2(\R^n)$ is again convex and closed.  
Thanks to \eqref{exm3},   the choice $u_0=-f$ satisfies again the condition (ii) of  Assumption \ref{ass:admissibleset}.
 In view of \eqref{eq:energyfirstpart} with $\hat y_{d,1}\equiv0$, the third condition $(U^\eps_{ad})_K\subset U^\eps_{ad}$ is again satisfied by Parseval's identity \eqref{eq:Plancherel}.  

\end{itemize}
\end{example}
Note that pointwise conditions on $u$ or $y$ are not covered by our approach. However, since solutions to homogenization problems typically 
exhibit a highly oscillatory behavior, pointwise conditions are not appropriate in the framework of high-order homogenization.   
 
\subsection{Outline and main results}
In the following we will expand the optimal solution $(u_\eps^*,y_\eps^*)$ of \eqref{P} in rescaled Bloch waves 
according to Proposition \ref{thm:Blochentwrescaled}. Our analysis consists of four steps:
In the first step, see Proposition \ref{prop:representationasadaption}, we find that the optimal solution $(u_\eps^*,y_\eps^*)$ is in fact an adaption: 
\begin{equation*}
u_\eps^*=\mathcal{A}^\eps(\tilde u^*_\eps)\quad\text{ and }\quad y_\eps^*=\mathcal{A}^\eps(\tilde y^*_\eps) 
\end{equation*}
for some appropriate  pair $(\tilde u^*_\eps,\tilde y^*_\eps)$. 
In the second step, we simplify the (Fourier) expansions of $\tilde u^*_\eps$ and $\tilde y^*_\eps$ using the analyticity of the lowest Bloch eigenvalue. 
Based on this first approximation, in our third step, we derive an $M$-th order effective optimal control problem of the following form. 

\begin{definition}[High-order effective optimal control problem] 
\label{def:highordereffectiveproblem}
  Let Assumptions \ref{ass:data} and  \ref{ass:admissibleset} hold. Then, for every $M \in \N$, we define the high-order effective cost functional $J_M^*: L^2(\R^n) \times H^M(\R^n) \to \R$ as follows:
\begin{align}
\begin{split}
\label{eq:energyeffectiveM}
J_M^*(  u,   y):=&\frac{\mu_1}{2} E^*_M(  y-y_{d,1})
+\frac{\mu_2}{2}\int_{\R^n}|  y(x)-y_{d,2}(x)|^2\,dx +\frac{\kappa}{2}\int_{\R^n}|  u(x)|^2\,dx, 
\end{split}
\end{align}
\begin{equation}
 \label{eq:energyeffective equation}
 E^*_M(y):=\int_{\R^n}\left(\sum_{k=1}^M\eps^{2k-2}\left\langle D^{k}y(x), D^{k} y(x),
\right\rangle_{\mathbb{A}^*_{2k}} + |y(x)|^2\right)\,dx,
\end{equation}
where   the tensors $\mathbb{A}^*_{2k}$ are as in Lemma \ref{lem:expansionTaylorlambda0} and 
\begin{equation*}
\left\langle D^{k}y(x), D^{k}y(x)
\right\rangle_{\mathbb{A}^*_{2k}}:=
\sum_{i_1,\dots,i_{2k}=1}^n (\mathbb{A}^*_{2k})_{i_1,\dots,i_{2k}}\partial^{k}_{x_{i_1}x_{i_2}\dots x_{i_{k}}}\,y(x)\, 
\partial^{k}_{x_{i_{k+1}}x_{i_{k+2}}\dots x_{i_{2k}}}\,y(x).
\end{equation*}
Introducing   $\mathbb{A}^*_{2k}D^{2k}:=\sum_{i_1,\dots,i_{2k}=1}^n (\mathbb{A}^*_{2k})_{i_1,\dots,i_{2k}}\partial^{2k}_{x_{i_1}x_{i_2}\dots x_{i_{2k}}}$, we consider the   effective state equation
\begin{equation}
\label{eq:Mthordereffectiveequation}
\sum_{k=1}^M\eps^{2k-2}(-1)^{k}\mathbb{A}^*_{2k}D^{2k}  y(x) +   y(x)
=f(x)+   u(x)
\end{equation} 
and propose the following high-order effective optimal control problem:
\begin{equation} \tag{$\textnormal{P}^*_M$} \label{PM} \left\{
\begin{aligned}
\min  \quad &J^*_M(  u,   y)\\
\textnormal{over} \quad   &(  u,    y)  \in U^*_{ad} \times H^{2M}(\R^n)\\
\textnormal{s.t.} \quad &\eqref{eq:Mthordereffectiveequation},
\end{aligned}
\right.
\end{equation}
 where the effective admissible control set $U^*_{ad}$ is given by the inverse image of $U^\eps_{ad}$ 
under the adaption operator $\mathcal{A}^\eps$ projected onto $K$, 
\begin{equation}
\label{eq.effectiveadmissibleset} 
U^*_{ad}:=\left\{  u(x)=\int_K \hat u(\eta)e^{2\pi i\eta\cdot x}\,d\eta\,|\, \mathcal{A}^\eps(  u)\in U^\eps_{ad} \right\}.
\end{equation}
\end{definition} 
In the fourth and final step, we show that  \eqref{PM} admits for all sufficiently small $\eps>0$ a unique optimal solution $(  u^*_M,    y^*_M)  \in U^*_{ad} \times H^{2M}(\R^n)$, which relies on a reformulation of the problem 
in Fourier space (see Proposition \ref{prop:reformulationoptimalcontrolBlochFourier}). As a main result, 
we prove in Proposition \ref{prop:approximationPDEconstraint} and Corollary \ref{cor:errorcontrols} that the   optimal solution $( {u}^*_{M}, {y}^*_{M})$ to \eqref{PM} satisfies the error estimate
\begin{equation}
\label{eq:approximationerrorcontrols}
\|\tilde u^*_\eps-  u^*_{M}\|_{L^2(\R^n)} + \|\tilde y^*_\eps-  y^*_{M}\|_{L^2(\R^n)}\leq C\eps^{2M}
\end{equation}
 for some $\eps$-independent constant $C>0$. We recall that the optimal solution to \eqref{P}   satisfies $u_\eps^*=\mathcal{A}^\eps(\tilde u^*_\eps)$ 
and  $y_\eps^*=\mathcal{A}^\eps(\tilde y^*_\eps)$. Since the functions $\tilde u^*_\eps, u^*_{M}, \tilde y^*_\eps$, and $ y^*_{M}$ have compact support in Fourier space, Lemma \ref{lem:equalityL2norm} and \eqref{eq:approximationerrorcontrols} yield our final result:

\begin{corollary} For all sufficiently small $\epsilon >0$, it holds that 
\begin{equation}
\| u^*_\eps-\mathcal{A}^\eps(  u^*_{M})\|_{L^2(\R^n)} + \|y^*_\eps-\mathcal{A}^\eps(  y^*_{M})\|_{L^2(\R^n)}\leq C\eps^{2M}.
\end{equation}
\end{corollary}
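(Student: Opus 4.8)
The plan is to obtain this corollary directly from three facts already established: the representation $u^*_\eps=\mathcal{A}^\eps(\tilde u^*_\eps)$, $y^*_\eps=\mathcal{A}^\eps(\tilde y^*_\eps)$ of the optimal solution (Proposition~\ref{prop:representationasadaption}); the Fourier-side error estimate \eqref{eq:approximationerrorcontrols}; and the $L^2$-isometry of $\mathcal{A}^\eps$ on functions with compact Fourier support (Lemma~\ref{lem:equalityL2norm}). No new estimate is required; the argument is a short assembly of these ingredients.

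First I would record that $\mathcal{A}^\eps$ is linear, which is immediate from Definition~\ref{def:adap}, since $f\mapsto\hat f$ is linear and so is $\hat f\mapsto\int_{Z/\eps}\hat f(\eta)\,\omega_0^\eps(\cdot;\eta)\,d\eta$. Combining this with the adaption representation of the optimal solution yields
\[
u^*_\eps-\mathcal{A}^\eps(u^*_M)=\mathcal{A}^\eps(\tilde u^*_\eps)-\mathcal{A}^\eps(u^*_M)=\mathcal{A}^\eps\bigl(\tilde u^*_\eps-u^*_M\bigr),
\qquad
y^*_\eps-\mathcal{A}^\eps(y^*_M)=\mathcal{A}^\eps\bigl(\tilde y^*_\eps-y^*_M\bigr).
\]

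Next I would check the hypothesis of Lemma~\ref{lem:equalityL2norm} for the two differences. By definition \eqref{eq.effectiveadmissibleset} of $U^*_{ad}$, the function $u^*_M$ has Fourier transform supported in $K$, and by Proposition~\ref{prop:representationasadaption} the same holds for $\tilde u^*_\eps$; hence $\supp\bigl(\widehat{\tilde u^*_\eps-u^*_M}\bigr)\subset K$, and likewise $\supp\bigl(\widehat{\tilde y^*_\eps-y^*_M}\bigr)\subset K$. Since $\eps>0$ is assumed small enough that $K\subset Z/\eps$, Lemma~\ref{lem:equalityL2norm} gives
\[
\|\mathcal{A}^\eps(\tilde u^*_\eps-u^*_M)\|_{L^2(\R^n)}=\|\tilde u^*_\eps-u^*_M\|_{L^2(\R^n)},
\qquad
\|\mathcal{A}^\eps(\tilde y^*_\eps-y^*_M)\|_{L^2(\R^n)}=\|\tilde y^*_\eps-y^*_M\|_{L^2(\R^n)}.
\]
Adding these two identities and applying \eqref{eq:approximationerrorcontrols} yields the asserted bound with the same constant $C$.

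There is no real obstacle at this stage: all the analytic work — the Bloch/Fourier reformulation of \eqref{P} and \eqref{PM}, the comparison of the two variational inequalities leading to \eqref{eq:approximationerrorcontrols}, and the identification of $u^*_\eps,y^*_\eps$ as adaptions — has already been done in the earlier propositions. The only point that genuinely needs attention here is that the Fourier supports of $\tilde u^*_\eps,\tilde y^*_\eps,u^*_M,y^*_M$ all lie in the \emph{fixed} compact set $K$, independently of $\eps$, so that Lemma~\ref{lem:equalityL2norm} can be invoked uniformly in $\eps$; once that is noted, the corollary follows in one line.
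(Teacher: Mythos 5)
Your argument is correct and is exactly the paper's own route: the paper derives this corollary by noting that $u^*_\eps=\mathcal{A}^\eps(\tilde u^*_\eps)$, $y^*_\eps=\mathcal{A}^\eps(\tilde y^*_\eps)$, that all four functions $\tilde u^*_\eps,u^*_M,\tilde y^*_\eps,y^*_M$ have Fourier support in the fixed compact set $K$, and then combining the isometry of Lemma \ref{lem:equalityL2norm} with the estimate \eqref{eq:approximationerrorcontrols}. Your write-up merely makes explicit the linearity of $\mathcal{A}^\eps$ and the support verification that the paper leaves implicit.
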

   
Let us comment on the high-order effective optimal control problem \eqref{PM}.  
This optimization problem still depends on $\eps$. It can nevertheless be regarded as an effective model in the sense of  the homogenization theory since the coefficients $\mathbb{A}^*_{2k}$ 
in the effective cost functional $J_M^*$ and the effective equation \eqref{eq:Mthordereffectiveequation} are $\eps$-independent. 
Note that, for $M=1$,  \eqref{PM} coincides with the classical 
homogenization limit   obtained in \cite{KP1}. For $M>1$ it can be understood as a higher-order approximation. 
The effective admissible set $U^*_{ad}$ may depend on $\eps$. In Section \ref{subsec:wellposedness} we 
determine $U^*_{ad}$ corresponding to the admissible sets from Example \ref{ex:admissiblesets}.  

\begin{remark}
As shown in Section \ref{subsec:wellposedness} below the effective state equation \eqref{eq:Mthordereffectiveequation} is well-posed for $\eps>0$ lying below a certain threshold $\eps_M$. 
Such a threshold can only be found if the right hand side of \eqref{eq:Mthordereffectiveequation} has compact support in Fourier space. This is why the effective admissible set \eqref{eq.effectiveadmissibleset} 
is restricted to functions with compact support $K$ in Fourier space. In Section \ref{sec:Wellposedepsindependent} we provide an alternative $M$-th order effective optimal control problem 
which is well-posed independently of $\eps$ and which has the same approximation quality as the one from Defintion \ref{def:highordereffectiveproblem}. 
In that situation the effective admissible set may be defined as $\left(\mathcal{A}^\eps\right)^{-1}(U^\eps_{ad})$. 
However, the effective optimal control still 
lies in the projected set $U^*_{ad}$ from \eqref{eq.effectiveadmissibleset}. 
\end{remark}
The rest of of the paper is organized as follows. In Section \ref{subsec:formalderivation} we derive and justify the effective model  \eqref{PM}, 
and in Section \ref{subsec:wellposedness} we show that it admits a unique optimal solution for all  sufficiently small $\eps>0$. Section \ref{sec:errorestimates} provides the central error estimates and proves the approximation property 
of  \eqref{PM}. In this context a high-order approximation for the adjoint state is obtained. In Section \ref{sec:Wellposedepsindependent} 
an alternative effective problem is proposed, which is   well-posed independently of $\eps$ and  related to  \eqref{PM} through an algebraic manipulation.

\section{The effective optimal control problem}
In this section, we justify the $M$-th order effective optimal control problem \eqref{PM}. The key steps in the derivation are 
(1) rewriting the original problem in Bloch space and (2) expanding the lowest Bloch eigenvalue in a Taylor series (see Section \ref{subsec:formalderivation}). 
In Section \ref{subsec:wellposedness} we  prove that for  all sufficiently small $\eps>0$, \eqref{PM} admits a unique optimal solution. 
\subsection{Derivation of the effective model}
\label{subsec:formalderivation}
As the first step, using Proposition \ref{thm:Blochentwrescaled}, we expand the optimal solution 
$(u^*_\eps, y_\eps^*)$ to  \eqref{P} in the rescaled Bloch waves as follows:
\begin{align*}
u^*_\eps(x)&=\int_{Z/\eps}\sum_{m\in\N_0} \hat u^*_{\eps,m}(\eta)\omega_m^\eps(x;\eta)\,d\eta\quad \textrm{and} \quad
y_\eps^*(x)&=\int_{Z/\eps}\sum_{m\in\N_0} \hat y^*_{\eps,m}(\eta)\omega_m^\eps(x;\eta)\,d\eta.
\end{align*} 
We then specify the Bloch expansion of the optimal state $y_\eps^*(x)$ and rewrite the cost functional $J^\eps$ in terms of the Bloch transform.
\begin{lemma}
\label{lem:EnergyinBlochform}
Let   Assumptions  \ref{ass:data} and  \ref{ass:admissibleset} hold. Then,
for every control $u_\eps\in U^\eps_{ad}$   with Bloch coefficients $\hat u_{\eps,m}$, the corresponding state  $y_\eps:= S_\epsilon u_\epsilon$ satisfies the following expansion:
\begin{align}
\begin{split}
\label{eq:expansionyeps}
 y_\eps(x)&=\int_{K}\frac{\hat f(\eta) +\hat u_{\eps,0}(\eta)}{1+\lambda_0^\eps(\eta)}\omega_0^\eps(x;\eta)\,d\eta
+\int_{(Z/\eps)\setminus K}\frac{\hat u_{\eps,0}(\eta)}{1+\lambda_0^\eps(\eta)}\omega_0^\eps(x;\eta)\,d\eta\\
&+\int_{Z/\eps}\sum_{m\geq 1}\frac{\hat u_{\eps,m}(\eta)}{1+\lambda_m^\eps(\eta)}\omega_m^\eps(x;\eta)\,d\eta,
\end{split}
\end{align}
and the cost functional $J^\eps$ is given by
\begin{equation*}
J^\eps(u_\eps, y_\eps)=
\hat J^\eps(\hat u_{\eps,m}):=\frac{\mu_1}{2}\hat J^\eps_1(\hat u_{\eps,m}) + \frac{\mu_2}{2}\hat J^\eps_2(\hat u_{\eps,m}) + \frac{\kappa}{2}\int_{Z/\eps}\sum_{m\in\N_0}|\hat u_{\eps,m}(\eta)|^2\,d\eta, 
\end{equation*}
where
\begin{align}
\hat J^\eps_1(\hat u_{\eps,m})=&\int_{K}\left|\frac{\hat f(\eta) + \hat u_{\eps,0}(\eta)}{1+\lambda_0^\eps(\eta)}-\hat y_{d,1}(\eta)\right|^2(1+\lambda^\eps_0)\,d\eta
+\int_{(Z/\eps)\setminus K}\frac{| \hat u_{\eps,0}(\eta)|^2}{1+\lambda_0^\eps(\eta)}\,d\eta\nonumber\\
\label{eq:energyfirstpart}
+&\int_{Z/\eps}\sum_{m\geq 1}\frac{|\hat u_{\eps,m}(\eta)|^2}{1+\lambda_m^\eps(\eta)}\,d\eta,\\
\hat J^\eps_2(\hat u_{\eps,m})=&\int_{K}\left|\frac{\hat f(\eta) + \hat u_{\eps,0}(\eta)}{1+\lambda_0^\eps(\eta)}-\hat y_{d,2}(\eta)\right|^2\,d\eta
+\int_{(Z/\eps)\setminus K}\frac{|\hat u_{\eps,0}(\eta)|^2}{(1+\lambda_0^\eps(\eta))^2}\,d\eta\nonumber\\
\label{eq:energysecondpart}
+&\int_{Z/\eps}\sum_{m\geq 1}\frac{|\hat u_{\eps,m}(\eta)|^2}{(1+\lambda_m^\eps(\eta))^2}\,d\eta.
\end{align}
 \end{lemma}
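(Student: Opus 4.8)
The plan is to expand every function occurring in \eqref{P} in rescaled Bloch waves (Proposition \ref{thm:Blochentwrescaled}), to exploit that the operator $-\nabla\cdot(A^\eps\nabla\,\cdot\,)+\id$ acts diagonally on this expansion through \eqref{eq:multiplierexpansionBloch}, and to translate the three terms of $J^\eps$ into Bloch space by Parseval's identity \eqref{eq:Plancherel}. The starting point is that, by Assumption \ref{ass:data} and Definition \ref{def:adap}, the data $f^\eps=\mathcal{A}^\eps(f)$, $y^\eps_{d,1}=\mathcal{A}^\eps(y_{d,1})$, $y^\eps_{d,2}=\mathcal{A}^\eps(y_{d,2})$ have a very simple Bloch expansion: comparing Definition \ref{def:adap} with \eqref{eq:RescaledBlochExpansion} and invoking uniqueness, only the lowest mode $m=0$ is present, its Bloch coefficient equals the ordinary Fourier transform ($\hat f$, $\hat y_{d,1}$, $\hat y_{d,2}$, respectively), and that coefficient is supported on $K$ because of $\supp(\hat f),\supp(\hat y_{d,1}),\supp(\hat y_{d,2})\subset K$.

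First I would establish \eqref{eq:expansionyeps}. Writing $y_\eps=S_\eps u_\eps$ in rescaled Bloch waves with coefficients $\hat y_{\eps,m}$, the state equation combined with \eqref{eq:multiplierexpansionBloch} --- applicable since $-\nabla\cdot(A^\eps\nabla y_\eps)=f^\eps+u_\eps-y_\eps\in L^2(\R^n)$ --- gives, upon matching Bloch coefficients of both sides and using uniqueness of the expansion, the relation $(1+\lambda^\eps_m(\eta))\hat y_{\eps,m}(\eta)=\widehat{(f^\eps)}_m(\eta)+\hat u_{\eps,m}(\eta)$ for every $m\in\N_0$ and a.e.\ $\eta\in Z/\eps$. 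Inserting $\widehat{(f^\eps)}_0=\hat f$ (supported in $K$) and $\widehat{(f^\eps)}_m=0$ for $m\geq1$, solving for $\hat y_{\eps,m}$, and splitting the $m=0$ integral into the regions $K$ and $(Z/\eps)\setminus K$ yields exactly \eqref{eq:expansionyeps}.

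Next I would compute the three contributions to $J^\eps(u_\eps,y_\eps)$. The control term is immediate from \eqref{eq:Plancherel}: $\int_{\R^n}|u_\eps|^2\,dx=\int_{Z/\eps}\sum_m|\hat u_{\eps,m}|^2\,d\eta$. For the $\mu_2$-term, linearity of the Bloch transform gives that $y_\eps-y^\eps_{d,2}$ has Bloch coefficients $\hat y_{\eps,m}-\widehat{(y^\eps_{d,2})}_m$, which by the above equal $\tfrac{\hat f+\hat u_{\eps,0}}{1+\lambda^\eps_0}-\hat y_{d,2}$ on $K$ (for $m=0$), $\tfrac{\hat u_{\eps,0}}{1+\lambda^\eps_0}$ on $(Z/\eps)\setminus K$ (for $m=0$), and $\tfrac{\hat u_{\eps,m}}{1+\lambda^\eps_m}$ for $m\geq1$; Parseval then produces $\hat J^\eps_2(\hat u_{\eps,m})$ as in \eqref{eq:energysecondpart}. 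The $\mu_1$-term needs one extra observation: for real-valued $v\in H^1(\R^n)$ with $-\nabla\cdot(A^\eps\nabla v)\in L^2(\R^n)$, integration by parts together with the symmetry of $A$ shows that the form in \eqref{energyfunk} equals $E^\eps(v)=\int_{\R^n}\overline{v}\,\bigl[-\nabla\cdot(A^\eps\nabla v)+v\bigr]\,dx$, whence \eqref{eq:multiplierexpansionBloch} and the polarized version of \eqref{eq:Plancherel} give $E^\eps(v)=\int_{Z/\eps}\sum_m(1+\lambda^\eps_m(\eta))|\hat v_m(\eta)|^2\,d\eta$. Applying this with $v=y_\eps-y^\eps_{d,1}$ (real-valued and sufficiently regular, since both $y_\eps$ and $y^\eps_{d,1}$ are, cf.\ the remark after Assumption \ref{ass:data}) and inserting its Bloch coefficients yields $\hat J^\eps_1(\hat u_{\eps,m})$ as in \eqref{eq:energyfirstpart}; here the weight $1+\lambda^\eps_m$ cancels one factor of $(1+\lambda^\eps_m)^{-2}$ coming from $|\hat y_{\eps,m}|^2$, leaving the denominators $1+\lambda^\eps_0$ and $1+\lambda^\eps_m$ seen there. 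Adding the three contributions with weights $\mu_1/2$, $\mu_2/2$, $\kappa/2$ finishes the proof.

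The main obstacle is the bookkeeping around $E^\eps$: it is defined in \eqref{energyfunk} via a \emph{real} bilinear form, whereas the Bloch expansion is over $\C$, so one must first pass to the $L^2(\R^n;\C)$-pairing by integration by parts (legitimate because the data, hence the state, are real-valued and have the required regularity) and only then invoke the multiplier identity \eqref{eq:multiplierexpansionBloch} and the polarization of \eqref{eq:Plancherel}. Once the identity $E^\eps(v)=\int_{Z/\eps}\sum_m(1+\lambda^\eps_m)|\hat v_m|^2\,d\eta$ is available, the remainder of the argument is a routine substitution.
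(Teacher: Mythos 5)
Your proposal is correct and follows essentially the same route as the paper: identify the Bloch coefficients of the adapted data (mode $0$ only, equal to the Fourier transforms, supported on $K$), derive \eqref{eq:expansionyeps} by matching coefficients via the multiplier identity \eqref{eq:multiplierexpansionBloch}, obtain \eqref{eq:energysecondpart} and the control term from Parseval, and obtain \eqref{eq:energyfirstpart} by applying the polarized Plancherel identity to the pairing of $-\nabla\cdot\left(A^\eps\nabla(y_\eps-y^\eps_{d,1})\right)+y_\eps-y^\eps_{d,1}$ with $y_\eps-y^\eps_{d,1}$. Your extra remark on passing from the real bilinear form $E^\eps$ to the complex $L^2$-pairing by integration by parts is a point the paper leaves implicit, but it does not change the argument.
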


\begin{proof}Thanks to \eqref{fudwellprepared}, the Bloch coefficients of 
$f^\eps=\mathcal{A}^\eps(f),  y^\eps_{d,1}=\mathcal{A}^\eps(y_{d,1}),  y^\eps_{d,2}=\mathcal{A}^\eps (y_{d,2})$ vanish for all modes $m\in \N$, and for  $m=0$ they are given by 
\begin{equation} \label{eq:Blochgivenfunc0}
\hat f_{\eps,0} = \left\{\begin{aligned}
  &\hat f  &\textrm{in }  K\\
  &0 &\textrm{in } (Z/\eps)\setminus K, 
  \end{aligned}
  \right. \
  \hat y^\eps_{d,1,0} = \left\{\begin{aligned}
  &\hat y^\eps_{d,1} &\textrm{in }  K\\
  &0 &\textrm{in } (Z/\eps)\setminus K, 
  \end{aligned}
  \right.\
  \hat  y^\eps_{d,2,0} = \left\{\begin{aligned}
  &\hat y^\eps_{d,2}  &\textrm{in }  K\\
  &0 &\textrm{in } (Z/\eps)\setminus K.
  \end{aligned}
  \right.
\end{equation}
Now, the first claim \eqref{eq:expansionyeps} follows directly from   \eqref{eq:multiplierexpansionBloch} applied to $y_\eps$, 
\begin{equation}
\label{eq:operatortoBloch}
-\nabla\cdot\left(A^\eps(x)\nabla y_\eps(x)\right) + y_\eps(x)=
\int_{Z/\eps}\sum_{m\in\N_0} (1+\lambda_m^\eps(\eta))\hat y_{\eps,m}(\eta)\omega_m^\eps(x;\eta)\,d\eta.
\end{equation}
By comparing this expansion with the Bloch expansion of $f^\eps+u_\eps$ along with \eqref{eq:Blochgivenfunc0},  the claim \eqref{eq:expansionyeps} follows. In view of \eqref{eq:expansionyeps}, the Bloch coefficients of $y_\eps$ are given by
\begin{equation} \label{eq:Blochylemma0}
 \hat y_{\eps,m}  = \frac{\hat u_{\eps,m}(\eta)}{1+\lambda_m^\eps(\eta)} \quad \forall m \in \N \quad \textrm{and} \quad
\hat y_{\eps,0} = \left\{\begin{aligned}
  &\frac{\hat f(\eta) +\hat u_{\eps,0}(\eta)}{1+\lambda_0^\eps(\eta)}   &\textrm{in }  K\\
  &\frac{\hat u_{\eps,0}(\eta)}{1+\lambda_0^\eps(\eta)}   &\textrm{in } (Z/\eps)\setminus K.
  \end{aligned}
  \right.
 \quad \end{equation}
Making use of \eqref{eq:Blochylemma0} and \eqref{eq:Blochgivenfunc0}, the claim \eqref{eq:energysecondpart} follows   directly from   Parseval's identity \eqref{eq:Plancherel} applied to  $y_\epsilon-y^\eps_{d,2}.$ Finally, to prove \eqref{eq:energyfirstpart}, we   apply \eqref{eq:multiplierexpansionBloch}  to $y_\eps-y^\eps_{d,1}$ and obtain that
\begin{equation} \label{eq:Blochylemma1}
\begin{aligned}
&-\nabla\cdot\left(A^\eps(x)\nabla(y_\eps-y^\eps_{d,1})(x)\right) + (y_\eps-y^\eps_{d,1})(x)\\
=&
\int_K (1+\lambda^\eps_0(\eta))(\hat y_{\eps,0}-\hat y_{d,1})(\eta)\omega_0^\eps(x;\eta)\,d\eta
+\int_{(Z/\eps)\setminus K} \!\!\!\! \!\!\!\!(1+\lambda^\eps_0(\eta))\hat y_{\eps,0}(\eta)\omega_0^\eps(x;\eta)\,d\eta\\
&+\int_{Z/\eps}\sum_{m\geq 1}(1+\lambda^\eps_m(\eta))\hat y_{\eps,m}(\eta)\omega_m^\eps(x;\eta)\,d\eta.
\end{aligned}
\end{equation}
Applying the Plancherel  identity for the Bloch expansion to the scalar product $(-\nabla\cdot\left(A^\eps \nabla(y_\eps-y^\eps_{d,1})\right) + y_\eps-y^\eps_{d,1}, y_\eps-y^\eps_{d,1})_{L^2(\R^n)}$ and the scalar product of their respective Bloch representations given by \eqref{eq:Blochylemma1}, \eqref{eq:Blochylemma0}, and \eqref{eq:Blochgivenfunc0}, we  conclude that   \eqref{eq:energyfirstpart} is valid.  
 \end{proof}
In the next step, we show that for the optimal control $u^*_\eps$ all Bloch modes with $m\geq 1$ or $\eta\in (Z/\eps)\setminus K$ vanish. 

\begin{proposition}
\label{prop:representationasadaption}
Under the assumptions of Lemma \ref{lem:EnergyinBlochform}, it holds that
\begin{align}
\label{eq:formulaueps}
u^*_\eps(x)&=\int_{K}\hat u^*_{\eps,0}(\eta)\omega_0^\eps(x;\eta)\,d\eta,\\
\label{eq:formulayeps}
y_\eps^*(x)&=\int_{K}\frac{\hat f(\eta) + \hat u^*_{\eps,0}(\eta)}{1+\lambda_0^\eps(\eta)}\omega_0^\eps(x;\eta)\,d\eta.
\end{align}
In particular, $u^*_\eps=\mathcal{A}^\eps(\tilde u^*_\eps)$ and $y_\eps^*(x)=\mathcal{A}^\eps(\tilde y^*_\eps)$ with 
\begin{align}
\label{eq:defutildeeps}
\tilde u^*_\eps(x)&=\int_K\hat u^*_{\eps,0}(\eta)e^{2\pi i\eta\cdot x}\,d\eta,\\
\label{eq:defytildeeps}
\tilde y_\eps^*(x)&=\int_{K}\frac{\hat f(\eta) + \hat u^*_{\eps,0}(\eta)}{1+\lambda_0^\eps(\eta)}e^{2\pi i\eta\cdot x}\,d\eta.
\end{align} 
\end{proposition}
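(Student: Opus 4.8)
The plan is to exploit the explicit Bloch-space representation of the cost functional obtained in Lemma~\ref{lem:EnergyinBlochform}, together with the truncation-invariance of the admissible set guaranteed by Assumption~\ref{ass:admissibleset}(iii). Writing the optimal control in rescaled Bloch waves as $u^*_\eps(x)=\int_{Z/\eps}\sum_{m\in\N_0}\hat u^*_{\eps,m}(\eta)\omega^\eps_m(x;\eta)\,d\eta$, I would introduce its projection onto the lowest mode and the frequency window $K$,
\[
u_K(x):=\int_K\hat u^*_{\eps,0}(\eta)\,\omega_0^\eps(x;\eta)\,d\eta,
\]
which, by Assumption~\ref{ass:admissibleset}(iii), again belongs to $U^\eps_{ad}$. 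Hence $u_K$ is a competitor for \eqref{P}, and the strategy is to show that it can only fail to strictly beat $u^*_\eps$ if the discarded Bloch modes of $u^*_\eps$ were already zero.

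The decisive observation is structural: in the formulas \eqref{eq:energyfirstpart}--\eqref{eq:energysecondpart} for $\hat J^\eps_1,\hat J^\eps_2$, and in the control-cost term $\frac\kappa2\int_{Z/\eps}\sum_{m}|\hat u_{\eps,m}|^2\,d\eta$, the contributions of the modes with $m\ge 1$, respectively of the mode $m=0$ with $\eta\in(Z/\eps)\setminus K$, enter \emph{only} as nonnegative quadratic terms, with no linear (cross) terms, because $\hat f,\hat y_{d,1},\hat y_{d,2}$ are supported in $K$ and carried entirely by the mode $m=0$ (cf.\ \eqref{eq:Blochgivenfunc0}). Replacing $u^*_\eps$ by $u_K$ leaves the $m=0$, $\eta\in K$ part of $J^\eps$ untouched --- note that by \eqref{eq:expansionyeps} the state $S_\eps u_K$ has the same lowest Bloch coefficient on $K$ as $y^*_\eps$ --- while removing all the remaining nonnegative contributions; since $\kappa>0$, this removal is \emph{strict} unless $\hat u^*_{\eps,m}\equiv 0$ for all $m\ge1$ and $\hat u^*_{\eps,0}\equiv 0$ on $(Z/\eps)\setminus K$. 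Optimality of $u^*_\eps$ therefore forces these coefficients to vanish, which is precisely \eqref{eq:formulaueps}; alternatively one may invoke the uniqueness of the minimizer of \eqref{P} to identify $u_K=u^*_\eps$ directly. Substituting the resulting expansion of $u^*_\eps$ into \eqref{eq:expansionyeps} yields \eqref{eq:formulayeps}, and comparing \eqref{eq:formulaueps}--\eqref{eq:formulayeps} with Definition~\ref{def:adap} gives the representations $u^*_\eps=\mathcal{A}^\eps(\tilde u^*_\eps)$ and $y^*_\eps=\mathcal{A}^\eps(\tilde y^*_\eps)$ with $\tilde u^*_\eps,\tilde y^*_\eps$ as in \eqref{eq:defutildeeps}--\eqref{eq:defytildeeps}, since both of these functions have Fourier transform supported in the compact set $K\subset Z/\eps$.

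The only genuine obstacle is the admissibility of the truncated control $u_K$: without it, decreasing the cost by truncation would be of no use. This is exactly what Assumption~\ref{ass:admissibleset}(iii) is designed to supply, and the cases treated in Example~\ref{ex:admissiblesets} confirm that it holds for the relevant $L^2$-type constraints. A minor bookkeeping point is that $\eps$ must be small enough that $K\subset Z/\eps$, so that $u_K$ and the frequency-restricted integrals above are meaningful; this is in force under the standing assumption on $\eps$. Everything else reduces to a routine application of the Bloch expansion and Plancherel identities already established in Propositions~\ref{thm:Blochentwrescaled} and Lemma~\ref{lem:EnergyinBlochform}.
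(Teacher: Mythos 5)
Your argument is correct and is essentially the paper's own proof: both rely on Lemma \ref{lem:EnergyinBlochform} to see that the modes with $m\ge 1$ or $\eta\in(Z/\eps)\setminus K$ contribute only nonnegative (and, via the $\kappa$-term, strictly positive unless zero) quadratic terms, so that the projection $u_K$ --- admissible by Assumption \ref{ass:admissibleset}(iii) --- cannot have larger cost, forcing $u^*_\eps=(u^*_\eps)_K$ by optimality/uniqueness. Your write-up merely makes explicit the ``with equality iff $u=u_K$'' step that the paper states without elaboration; the derivation of \eqref{eq:formulayeps} from \eqref{eq:expansionyeps} and the identification with the adaption $\mathcal{A}^\eps$ are likewise identical.
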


\begin{proof}
Our aim is to show that $\hat u^*_{\eps,m}(\eta)=0$ for $m\geq 1$ or $\eta\in (Z/\eps)\setminus K$. 
Indeed, from the representation formula for $J^\eps$  (Lemma \ref{lem:EnergyinBlochform}), one directly concludes that for 
$u\in U^\eps_{ad}$ the projection $u_K$ in the sense of \eqref{eq:restrictionK} satisfies
\begin{equation*}
J^\eps(u,S_\epsilon u)\geq J^\eps(u_K, S_\epsilon u_K)
\end{equation*}
with equality iff $u=u_K$.
Since by Assumption \ref{ass:admissibleset} the projection $u_K$ of  every $u\in U^\eps_{ad}$ lies again in $U^\eps_{ad}$, 
the (unique) optimal control  satisfies $u^*_\eps=(u^*_\eps)_K$, which is the claimed result \eqref{eq:formulaueps}.  
The statement \eqref{eq:formulayeps} is then a direct consequence of \eqref{eq:formulaueps} and \eqref{eq:expansionyeps}. 
\end{proof}

  Proposition \ref{prop:representationasadaption} states that the optimal control of \eqref{P} is an adaption $u^*_\eps=\mathcal{A}^\eps(\tilde u^*_\eps)$, with the Bloch coefficient 
$\hat u^*_{\eps,0}$ supported in $K$.    
In order to construct a first approximation for $y^*_\eps$, we exploit the formula \eqref{eq:formulayeps} and the fact that the lowest Bloch eigenvalue $\lambda_0$ 
is analytic in a neighbourhood of $\eta=0$ (cf. Lemma \ref{lem:expansionTaylorlambda0}). In particular, for $M\in \N$ arbitrary, we can expand
\begin{align}
\begin{split}
\label{eq:expansionlambdaeps}
\lambda_0^\eps(\eta)=\frac{1}{\eps^2}\lambda_0(\eps\eta)
=&(2\pi)^2\mathbb{A}^*_2\cdot\eta^{\otimes 2} + \eps^2(2\pi)^4\mathbb{A}^*_4\cdot\eta^{\otimes 4}+\dots\\
&+\eps^{2M-2}(2\pi)^{2M}\mathbb{A}^*_{2M}\cdot\eta^{\otimes 2M} + O\left(\eps^{2M}|\eta|^{2M+2}\right),
\end{split}
\end{align} 
where the error is of order $\eps^{2M}$ uniformly in $\eta\in K$. With  
\begin{equation}
\label{eq:polynomialBloch}
P_M^\eps(\eta):=(2\pi)^2\mathbb{A}^*_2\cdot\eta^{\otimes 2} + \eps^2(2\pi)^4\mathbb{A}^*_4\cdot\eta^{\otimes 4}+\dots+\eps^{2M-2}(2\pi)^{2M}\mathbb{A}^*_{2M}\cdot\eta^{\otimes 2M},
\end{equation}
we propose   the following approximation for $\tilde y^*_\eps$:
\begin{equation}
\label{eq:ystarMappdef}
\tilde  y^*_{M,app}(x):=\int_{K}\frac{\hat f(\eta) + \hat u^*_{\eps,0}(\eta)}{1+P_M^\eps(\eta)}e^{2\pi i\eta\cdot x}\,d\eta.
\end{equation}
Indeed, the above function $\tilde y^*_{M,app}$ provides a good approximation for $\tilde y^*_\eps$ in the following sense. 

\begin{proposition}
\label{prop:firstapproximationy}
Let $\eps>0$ be such that $1+P^\eps_M(\eta)>0$ for all $\eta\in K$. 
Let $\tilde y^*_\eps$ be as in \eqref{eq:defytildeeps}, and let $\tilde y^*_{M,app}$ be defined through \eqref{eq:ystarMappdef}. Then
there exists a constant $C>0$ such that 
\begin{equation*}
\|\tilde y^*_\eps-\tilde y^*_{M,app}\|_{L^2(\R^n)}\leq C\eps^{2M}.
\end{equation*}
\end{proposition}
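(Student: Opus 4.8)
The plan is to pass to Fourier space via Plancherel's theorem and then estimate a purely scalar multiplier. Since both $\tilde y^*_\eps$ and $\tilde y^*_{M,app}$ are inverse Fourier transforms of functions supported in the compact set $K$, Plancherel's identity on $\R^n$ gives
\begin{equation*}
\|\tilde y^*_\eps-\tilde y^*_{M,app}\|_{L^2(\R^n)}^2
=\int_K\left|\hat f(\eta)+\hat u^*_{\eps,0}(\eta)\right|^2
\left|\frac{1}{1+\lambda_0^\eps(\eta)}-\frac{1}{1+P_M^\eps(\eta)}\right|^2 d\eta.
\end{equation*}
So it suffices to bound the multiplier difference uniformly on $K$ and to control the $L^2(K)$-mass of $\hat f+\hat u^*_{\eps,0}$ independently of $\eps$.

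For the multiplier, write
$\frac{1}{1+\lambda_0^\eps}-\frac{1}{1+P_M^\eps}=\frac{P_M^\eps-\lambda_0^\eps}{(1+\lambda_0^\eps)(1+P_M^\eps)}$.
The expansion \eqref{eq:expansionlambdaeps} states precisely that $\lambda_0^\eps(\eta)-P_M^\eps(\eta)=O(\eps^{2M}|\eta|^{2M+2})$ uniformly in $\eta\in K$, so, $K$ being bounded, $|\lambda_0^\eps(\eta)-P_M^\eps(\eta)|\le C\eps^{2M}$ for all $\eta\in K$. For the denominators: $1+\lambda_0^\eps(\eta)\ge 1$ because $\lambda_0^\eps\ge 0$; and since $P_M^\eps(\eta)=\lambda_0^\eps(\eta)+O(\eps^{2M})\ge -C\eps^{2M}$, we obtain $1+P_M^\eps(\eta)\ge \tfrac12$ once $\eps$ is sufficiently small (consistent with, and stronger than, the standing hypothesis $1+P_M^\eps>0$ on $K$). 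Hence the multiplier difference is bounded by $2C\eps^{2M}$ pointwise on $K$.

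It remains to show $\int_K|\hat f+\hat u^*_{\eps,0}|^2\,d\eta\le C$ with $C$ independent of $\eps$. As $\hat f\in L^2(\R^n)$ is fixed, one only needs a uniform bound for $\hat u^*_{\eps,0}$ in $L^2(K)$. By Proposition \ref{prop:representationasadaption} the only non-vanishing Bloch coefficient of $u^*_\eps$ is $\hat u^*_{\eps,0}$, supported in $K$, so Parseval's identity \eqref{eq:Plancherel} gives $\|\hat u^*_{\eps,0}\|_{L^2(K)}=\|u^*_\eps\|_{L^2(\R^n)}$, which is bounded uniformly in $\eps$ by Lemma \ref{lem:assumption2}. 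Combining the three estimates yields $\|\tilde y^*_\eps-\tilde y^*_{M,app}\|_{L^2(\R^n)}^2\le C\eps^{4M}$, and the claim follows after taking square roots.

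There is no serious obstacle here; the only points requiring care are (a) tracking the $\eps$-independence of every constant, which ultimately rests on the uniform control bound of Lemma \ref{lem:assumption2}, and (b) the uniform lower bound on $1+P_M^\eps$ over $K$, which is immediate once one observes that $P_M^\eps$ differs from the nonnegative eigenvalue $\lambda_0^\eps$ only at order $\eps^{2M}$.
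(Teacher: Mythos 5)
Your proof is correct and follows essentially the same route as the paper's: Parseval in Fourier space, a uniform $O(\eps^{2M})$ bound on the multiplier difference $\left|\frac{1}{1+\lambda_0^\eps}-\frac{1}{1+P_M^\eps}\right|$ over $K$, and the uniform control bound from Lemma \ref{lem:assumption2} via Parseval. Your explicit lower bound $1+P_M^\eps\geq\tfrac12$ on $K$ for small $\eps$ is a welcome clarification of a step the paper leaves implicit, but it is not a different argument.
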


\begin{proof}
Using Parseval's identity for Fourier expansions one finds
\begin{align*}
\|\tilde y^*_\eps-\tilde y^*_{M,app}\|^2_{L^2(\R^n)}
=\int_K|\hat f(\eta) + \hat u^*_{\eps,0}(\eta)|^2\left|\frac{1}{1+\lambda_0^\eps(\eta)}
-\frac{1}{1+P_M^\eps(\eta)}\right|^2\,d\eta\\
\leq\left\|\frac{1}{1+\lambda_0^\eps(\eta)}
-\frac{1}{1+P_M^\eps(\eta)}\right\|^2_{L^\infty(K)}\int_{K}|\hat f(\eta) + \hat u^*_{\eps,0}(\eta)|^2\,d\eta\\
\leq \tilde C\eps^{4M}\|\hat f + \hat u^*_{\eps,0}\|^2_{L^2(K)} \underbrace{\leq}_{\eqref{eq:Plancherel}} 2\tilde C\eps^{4M}\left(\|f\|^2_{L^2(\R^n)} + \|u^*_{\eps}\|^2_{L^2(\R^n)}\right)\underbrace{\leq}_{\textnormal{Lemma} \, \ref{lem:assumption2}} C^2\eps^{4M},
\end{align*}
where for the second inequality we exploited that the error between $P_M^\eps$ and $\lambda_0^\eps$ is of order $\eps^{2M}$ uniformly in $\eta\in K$. 
\end{proof}

By a direct calculation,   the approximation $\tilde y^*_{M,app}$ is a solution to the following $(2M)$-th order constant coefficient equation
\begin{equation}
\label{eq:wrongeffectivesolutionType2}
\sum_{k=1}^M\eps^{2k-2}(-1)^{k}\mathbb{A}^*_{2k}D^{2k}\tilde y^*_{M,app}(x) + \tilde y^*_{M,app}(x)
=f(x)+ \tilde u^*_\eps(x),
\end{equation} 
where $\tilde u^*_\eps$ was defined in \eqref{eq:defutildeeps} satisfying $u^*_\eps=\mathcal{A}^\eps(\tilde u^*_\eps)$. It is therefore reasonable 
to regard \eqref{eq:Mthordereffectiveequation} as a candidate for an effective PDE-constraint. 
The effective energy \eqref{eq:energyeffectiveM} can be justified as follows. Let   $u\in U^*_{ad}$ be a function with compact support $K$ in Fourier space, i.e.,
\begin{equation*}
u(x)=\int_K \hat u(\eta)e^{2\pi i\eta\cdot x}\,d\eta.
\end{equation*}
We readily know that the Bloch coefficients $\hat u_{\eps,m}$ of the adaption $\mathcal{A}^\eps(u)\in U^\eps_{ad}$   satisfy $\hat u_{\eps,m}=0$ for all $m\geq 1$ and $\hat u_{\eps,0}=\hat u$.  Therefore, as a consequence of 
  Lemma \ref{lem:EnergyinBlochform}, we obtain
\begin{align}
\begin{split}
\label{eq:energycompactsupport}
J^\eps(\mathcal A^\eps(u), S_\epsilon \mathcal A^\eps(u) )  =\hat J^\eps(\hat u)=
\frac{\mu_1}{2}\int_{K}\left|\frac{\hat f(\eta) + \hat u(\eta)}{1+\lambda_0^\eps(\eta)}-\hat y_{d,1}(\eta)\right|^2(1+\lambda^\eps_0)\,d\eta\\
 + \frac{\mu_2}{2}\int_{K}\left|\frac{\hat f(\eta) + \hat u(\eta)}{1+\lambda_0^\eps(\eta)}-\hat y_{d,2}(\eta)\right|^2\,d\eta
+\frac{\kappa}{2}\int_{K}|\hat u(\eta)|^2\,d\eta.
\end{split}
\end{align}
In order to find a candidate for the effective cost functional, we proceed analogously to the derivation of the effective PDE-constraint by  
formally replacing all $(1+\lambda_0^\eps(\eta))$-terms in \eqref{eq:energycompactsupport} by $1+P_M^\eps(\eta)$. This leads to the following 
candidate for the effective energy
\begin{align}
\begin{split}
\label{eq:effectiveenergycompactsupport}
\hat J_M^*(\hat u)
=&\frac{\mu_1}{2}\int_{K}\left|\frac{\hat f(\eta) + \hat u(\eta)}{1+P_M^\eps(\eta)}-\hat y_{d,1}(\eta)\right|^2(1+P_M^\eps(\eta))\,d\eta\\
 &+ \frac{\mu_2}{2}\int_{K}\left|\frac{\hat f(\eta) + \hat u(\eta)}{1+P_M^\eps(\eta)}-\hat y_{d,2}(\eta)\right|^2\,d\eta
+\frac{\kappa}{2}\int_{K}|\hat u(\eta)|^2\,d\eta.
\end{split}
\end{align}
By a direct calculation in Fourier space, the above functional is exactly the control-reduced objective functional associated with
 \eqref{eq:energyeffectiveM} taking into account  the effective state equation \eqref{eq:Mthordereffectiveequation}.  
 
\subsection{Well-posedness of \eqref{PM}}
\label{subsec:wellposedness}
The goal of this section is to prove the existence of a unique optimal solution to \eqref{PM},  provided that $\eps>0$ is sufficiently small. 
The main difficulty lies in the well-posedness of the effective equation 
\begin{equation*}
\mathcal{L}^*_M( \tilde y):=\sum_{k=1}^M\eps^{2k-2}(-1)^{k}\mathbb{A}^*_{2k}D^{2k} \tilde y(x) + \tilde y(x)
=f(x)+ u(x),
\end{equation*}  
where the highest-order operator $\mathbb{A}^*_{2M}D^{2M}$ might have no or even the wrong sign. 
Indeed, while $\mathbb{A}^*_{2}$ is positive definite, for $\mathbb{A}^*_{4}$ it has been shown in  \cite{COV2} that this tensor is negative semidefinite. 
In particular, for $M=2$,   \eqref{eq:Mthordereffectiveequation} 
reads as 
\begin{equation*}
-\mathbb{A}^*_2D^2 \tilde y(x)+ \eps^2\mathbb{A}^*_4D^4 \tilde y(x) + \tilde y(x)=f(x)+   u(x).
\end{equation*}
This fourth-order equation is ill-posed for general right hand sides. However, in our setting, $f$ and every control $\tilde u\in U^*_{ad}$ have compact support $K$ in Fourier space, 
wherefore for $\eps>0$ sufficiently small the effective equation can be uniquely solved. The well-posedness is based on the following algebraic observation and 
the fact that the first effective tensor $\mathbb{A}^*_2$ is positive definite, i.e., $\mathbb{A}^*_2\cdot\eta^{\otimes 2}\geq \lambda|\eta|^2$ for some $\lambda>0$.

\begin{lemma}
\label{lem:auxiliarylemma}
Let $K\in\R^n$ be a given compact set and let $M\in\N$ be fixed. Let $\lambda>0$ be the ellipticity constant of $\mathbb{A}^*_2$ as above. 
Then there exists $\eps_{M}>0$ such that for every $0<\eps<\eps_{M}$ and every $N\in\N$ with $N\leq M$ 
\begin{equation}
\label{eq:algebraiclemmaPM}
 P_N^\eps(\eta)\geq \frac{\lambda}{2}|\eta|^2\quad \text{for all } \eta\in K,
\end{equation}
 where $P_N^\eps$ is the polynomial defined in \eqref{eq:polynomialBloch}.
\end{lemma}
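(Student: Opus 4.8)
The plan is to separate the leading quadratic term of $P_N^\eps$, which is controlled from below by the ellipticity of $\mathbb{A}^*_2$, and to treat the remaining terms as a perturbation that becomes negligible as $\eps\to 0$, uniformly on the compact set $K$ and uniformly in $N\leq M$.

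First I would fix $R:=\max_{\eta\in K}|\eta|$, which is finite since $K$ is compact, and choose a tensor norm $\|\cdot\|$ such that $|\mathbb{A}^*_{2k}\cdot\eta^{\otimes 2k}|\leq \|\mathbb{A}^*_{2k}\|\,|\eta|^{2k}$ for all $\eta$. For $N=1$ the claim is trivial since $P_1^\eps(\eta)=(2\pi)^2\mathbb{A}^*_2\cdot\eta^{\otimes 2}\geq (2\pi)^2\lambda|\eta|^2\geq \tfrac{\lambda}{2}|\eta|^2$. For $2\leq N\leq M$ and $\eta\in K$ I would write
\[
P_N^\eps(\eta)=(2\pi)^2\mathbb{A}^*_2\cdot\eta^{\otimes 2}+R_N^\eps(\eta),\qquad R_N^\eps(\eta):=\sum_{k=2}^N\eps^{2k-2}(2\pi)^{2k}\,\mathbb{A}^*_{2k}\cdot\eta^{\otimes 2k}.
\]
Using $|\eta|^{2k}=|\eta|^{2k-2}|\eta|^2\leq R^{2k-2}|\eta|^2$ on $K$, the remainder is dominated by an $N$-independent quantity:
\[
|R_N^\eps(\eta)|\leq |\eta|^2\sum_{k=2}^M \eps^{2k-2}(2\pi)^{2k}\|\mathbb{A}^*_{2k}\|\,R^{2k-2}=:|\eta|^2\,g(\eps),
\]
where $g$ is a polynomial in $\eps^2$ without constant term, so $g(\eps)\to 0$ as $\eps\to 0^+$.

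Next I would choose $\eps_M>0$ so small that $g(\eps)\leq \lambda/2$ for all $0<\eps<\eps_M$. Then for every such $\eps$, every $N\leq M$, and every $\eta\in K$,
\[
P_N^\eps(\eta)\geq (2\pi)^2\lambda|\eta|^2-g(\eps)|\eta|^2\geq \lambda|\eta|^2-\tfrac{\lambda}{2}|\eta|^2=\tfrac{\lambda}{2}|\eta|^2,
\]
where I used $(2\pi)^2\geq 1$ in the middle estimate. This is exactly \eqref{eq:algebraiclemmaPM}.

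There is no real obstacle here; the only point requiring slight care is that the threshold $\eps_M$ must be uniform in $N\leq M$, which is taken care of by bounding $|R_N^\eps|$ by the $N$-independent majorant $|\eta|^2 g(\eps)$ obtained from summing all terms up to $k=M$ after passing to absolute values. Everything else is a routine estimate over the compact set $K$.
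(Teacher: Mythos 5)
Your proof is correct and follows essentially the same route as the paper: split off the leading term $(2\pi)^2\mathbb{A}^*_2\cdot\eta^{\otimes 2}\geq\lambda|\eta|^2$, bound the higher-order terms in absolute value by $|\eta|^2$ times a polynomial in $\eps$ with no constant term (made $N$-independent by summing up to $M$ over the compact set $K$), and choose $\eps_M$ so that this perturbation stays below $\lambda/2$. Your explicit remark that $(2\pi)^2\geq 1$ is a small point the paper leaves implicit, but the argument is the same.
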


\begin{proof}
Since $\mathbb{A}^*_2\cdot\eta^{\otimes 2}\geq \lambda|\eta|^2$, it is sufficient to prove that there exists some $\eps_{M}>0$ such that for every $0<\eps<\eps_{M}$ and every $N\leq M$
\begin{equation} 
\label{eq:algebraiclemmaPMsecondform}
\eps^2(2\pi)^4\mathbb{A}^*_4\cdot\eta^{\otimes 4}+\dots+\eps^{2N-2}(2\pi)^{2N}\mathbb{A}^*_{2N}\cdot\eta^{\otimes 2N}\geq-\frac{\lambda}{2}|\eta|^2
\end{equation} 
Indeed, 
\begin{equation*}
\sum_{k=2}^{N}\left|\eps^{2k-2}(2\pi)^{2k}\mathbb{A}^*_{2k}\cdot\eta^{\otimes 2k}\right|\leq (2\pi)^{2N}|\eta|^2\left(\sum_{k=2}^N \left(\eps|\eta|\right)^{2k-2}\|\mathbb{A}^*_{2k}\|\right),
\end{equation*}
where $\|\mathbb{A}^*_{2k}\|$ denotes an appropriate norm of the tensor $\mathbb{A}^*_{2k}$. 
Since $\eta\in K$ varies in a bounded set, we can choose $\eps_{M}>0$ such that for every $0<\eps<\eps_{M}$
\begin{equation*}
(2\pi)^{2N}\sum_{k=2}^{N}\left(\eps|\eta|\right)^{2k-2}\|\mathbb{A}^*_{2k}\|\leq(2\pi)^{2M}\sum_{k=2}^{M}\left(\eps|\eta|\right)^{2k-2}\|\mathbb{A}^*_{2k}\|<\frac{\lambda}{2}.
\end{equation*}
Then $\eps_{M}$ satisfies the requirements of the lemma. 
\end{proof}

We emphasize that the threshold $\eps_M$ depends on the compact set $K$, the order of approximation $M$ and the effective coefficients $\mathbb{A}^*_{2},\dots,\mathbb{A}^*_{2M}$. 
  With the above auxiliary lemma at hand we can now conclude the well-posedness of the effective Equation \eqref{eq:Mthordereffectiveequation}. 
		
\begin{proposition}[Well-posedness of \eqref{eq:Mthordereffectiveequation}]
\label{prop:wellposednesseffectiveeq} Suppose that Assumption \ref{ass:data} holds.
Let $M\in\N$ and let $\eps_M$ be as in Lemma \ref{lem:auxiliarylemma}. Then, for every $0<\eps<\eps_M$ 
and every control $  u\in L^2(\R^n)$ with Fourier transform $\hat u$ satisfying $\supp(\hat u)\subset K$, it holds that:
\begin{itemize}
\item[i)] There exists a unique weak 
solution $  y\in H^k(\R^n)$ for all $k \in \N$ to the   effective state equation \eqref{eq:Mthordereffectiveequation}. 
\item[ii)] The solution $  y$ satisfies the a priori estimate
\begin{equation}
\label{eq:aprioriestimateeffectiveeq}
\|  y\|^2_{L^2(\R^n)} +\lambda\|\nabla  y\|^2_{L^2(\R^n)} \leq \|f+  u\|^2_{L^2(\R^n)}.
\end{equation}
\end{itemize}
\end{proposition}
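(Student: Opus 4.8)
The plan is to pass to the Fourier side, where the operator $\mathcal{L}^*_M$ becomes a multiplication operator, and to invert it on the Fourier support of the data with the help of Lemma \ref{lem:auxiliarylemma}. Using $\widehat{\partial_{x_j}w}(\xi)=2\pi i\,\xi_j\hat w(\xi)$, the term $\eps^{2k-2}(-1)^k\mathbb{A}^*_{2k}D^{2k}$ in \eqref{eq:Mthordereffectiveequation} has Fourier symbol $\eps^{2k-2}(2\pi)^{2k}\mathbb{A}^*_{2k}\cdot\xi^{\otimes 2k}$, the operator factor $(-1)^k$ cancelling the factor $i^{2k}=(-1)^k$; hence $\mathcal{L}^*_M$ has symbol $1+P_M^\eps(\xi)$ with $P_M^\eps$ as in \eqref{eq:polynomialBloch}, and \eqref{eq:Mthordereffectiveequation} is equivalent in $\mathcal{S}'(\R^n)$ to $(1+P_M^\eps(\xi))\hat y(\xi)=\hat f(\xi)+\hat u(\xi)$. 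Since $\supp\hat f\subset K$ (Assumption \ref{ass:data}) and $\supp\hat u\subset K$ by hypothesis, the right-hand side is supported in $K$, and for $0<\eps<\eps_M$ Lemma \ref{lem:auxiliarylemma} gives $1+P_M^\eps(\xi)\geq 1+\tfrac{\lambda}{2}|\xi|^2>0$ for every $\xi\in K$.

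For existence I would define $\hat y:=(\hat f+\hat u)/(1+P_M^\eps)$ on $K$ and $\hat y:=0$ on $\R^n\setminus K$, and set $y:=\mathcal{F}^{-1}\hat y$. Since $1+P_M^\eps$ is bounded away from zero on the compact set $K$, one has $\hat y\in L^2$ with $\supp\hat y\subset K$, so $(1+|\xi|^2)^{k/2}\hat y\in L^2$ for every $k$ and therefore $y\in H^k(\R^n)$ for all $k\in\N$; by construction $y$ solves \eqref{eq:Mthordereffectiveequation} in the weak (in fact classical) sense. The symmetry \eqref{ass:sym} of $K$, the reality of $f$ (Assumption \ref{ass:data}) and the fact that $P_M^\eps$ is real and even yield $\hat y(-\xi)=\overline{\hat y(\xi)}$, hence $y$ is real-valued. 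For uniqueness, let $y\in H^M(\R^n)$ be an arbitrary weak solution. Taking the distributional Fourier transform of \eqref{eq:Mthordereffectiveequation} gives $(1+P_M^\eps(\xi))\hat y(\xi)=\hat f(\xi)+\hat u(\xi)$, an identity between measurable functions that therefore holds a.e.\ (the left-hand side being the pointwise product of the polynomial $1+P_M^\eps$ with $\hat y\in L^2$). On $\R^n\setminus K$ the right-hand side vanishes while the real-analytic function $1+P_M^\eps$ vanishes only on a Lebesgue-null set, forcing $\hat y=0$ a.e.\ there; on $K$, division by $1+P_M^\eps>0$ determines $\hat y$ uniquely. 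Thus the weak solution is unique and coincides with the one constructed above.

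The a priori estimate \eqref{eq:aprioriestimateeffectiveeq} I would obtain from Plancherel and $\supp\hat y\subset K$:
\begin{equation*}
\|y\|^2_{L^2(\R^n)}+\lambda\|\nabla y\|^2_{L^2(\R^n)}=\int_K\bigl(1+\lambda(2\pi)^2|\xi|^2\bigr)|\hat y(\xi)|^2\,d\xi,\qquad \|f+u\|^2_{L^2(\R^n)}=\int_K\bigl(1+P_M^\eps(\xi)\bigr)^2|\hat y(\xi)|^2\,d\xi,
\end{equation*}
so it suffices to check $1+\lambda(2\pi)^2|\xi|^2\leq(1+P_M^\eps(\xi))^2$ on $K$. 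One has $(1+P_M^\eps)^2\geq 1+2P_M^\eps$, and the bound established inside the proof of Lemma \ref{lem:auxiliarylemma} yields $P_M^\eps(\xi)\geq(2\pi)^2\mathbb{A}^*_2\cdot\xi^{\otimes 2}-\tfrac{\lambda}{2}|\xi|^2\geq\lambda\bigl((2\pi)^2-\tfrac12\bigr)|\xi|^2$, hence $2P_M^\eps(\xi)\geq\lambda\bigl(2(2\pi)^2-1\bigr)|\xi|^2\geq\lambda(2\pi)^2|\xi|^2$ since $(2\pi)^2\geq 1$; this gives the required inequality. (Equivalently, one may test the weak form with $y$ and apply Young's inequality.) The only genuinely delicate point is the two-sided bookkeeping: the symbol $1+P_M^\eps$ is coercive only on $K$ and off $K$ it may vanish or change sign, so one has to argue first that \emph{every} weak solution automatically has Fourier transform supported in $K$ before the symbol can be inverted and the energy estimate carried out. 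Everything else is routine Fourier analysis.
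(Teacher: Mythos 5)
Your proof is correct and follows essentially the same route as the paper: pass to the Fourier side, where \eqref{eq:Mthordereffectiveequation} becomes multiplication by the symbol $1+P_M^\eps$, construct the solution by the explicit formula \eqref{eq:representationFouriereffectivesolution}, and invoke Lemma \ref{lem:auxiliarylemma} for positivity of the symbol on $K$. Two local points differ, both to your credit. First, the paper's derivation of the a priori estimate simply asserts that $\hat y(\eta)=0$ for $\eta\notin K$ for \emph{every} weak solution and then obtains uniqueness from the estimate; you instead justify this support statement (off $K$ one has $(1+P_M^\eps)\hat y=0$, and the nonconstant polynomial $1+P_M^\eps$ vanishes only on a Lebesgue-null set, so $\hat y=0$ a.e.\ there), which is precisely the ``delicate point'' you flag and which the paper leaves implicit. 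Second, for \eqref{eq:aprioriestimateeffectiveeq} the paper tests the relation $(1+P_M^\eps)\hat y=\hat f+\hat u$ against $\hat y$ and uses Cauchy--Schwarz together with Young's inequality, whereas you compare symbols pointwise via $(1+P_M^\eps)^2\geq 1+2P_M^\eps\geq 1+\lambda(2\pi)^2|\xi|^2$ on $K$, the middle bound coming from \eqref{eq:algebraiclemmaPMsecondform}; both arguments work, and yours has the minor advantage of tracking the factor $(2\pi)^2$ relating $\|\nabla y\|^2_{L^2(\R^n)}$ to $\int|\xi|^2|\hat y(\xi)|^2\,d\xi$, which the paper glosses over. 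Your observation that the symmetry of $K$ and the reality of the data make $y$ real-valued is a correct bonus not present in the paper's proof.
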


\begin{proof}
To show existence, let us consider 
\begin{equation}
\label{eq:representationFouriereffectivesolution}
  y(x):=\int_{K}\frac{\hat f(\eta) + \hat u(\eta)}{1+P_M^\eps(\eta)}e^{2\pi i\eta\cdot x}\,d\eta.
\end{equation} 
As a function with compact support in Fourier space, $y$ belongs to $H^k(\R^n)$ for all $k \in \N$. 
By a direct calculation in Fourier space one finds that \eqref{eq:representationFouriereffectivesolution} is a solution to the effective equation \eqref{eq:Mthordereffectiveequation}. 
It remains to prove the a priori estimate \eqref{eq:aprioriestimateeffectiveeq}, from which uniqueness directly follows. 
Therefore note that the Fourier transform $\hat y$ of every solution satisfies 
\begin{equation*}
(1+P_M^\eps)\hat y=\hat f+\hat u.
\end{equation*}
Testing the above relation with $\hat y$, taking into account $\hat f(\eta)=\hat u(\eta)=\hat y(\eta)=0$ for $\eta\notin K$, and exploiting Lemma \ref{lem:auxiliarylemma} provides that 
\begin{align*}
\int_{\R^n} \left(1+ \frac{\lambda}{2}|\eta|^2\right)|\hat y(\eta)|^2\,d\eta&\leq \int_{K}(1+P_M^\eps(\eta))|\hat y(\eta)|^2\,d\eta
=\int_K (\hat f(\eta)+\hat u(\eta))\overline{\hat y(\eta)}\,d\eta\\
&\leq \|\hat f + \hat u\|_{L^2(K)}\|\hat y\|_{L^2(K)}\leq \frac{1}{2}\left(\|\hat f + \hat u\|^2_{L^2(K)} + \|\hat y\|^2_{L^2(\R^n)}\right),
\end{align*}
where $\bar{\hat y}$ denotes the complex conjugate of $\hat y$.
By Parseval's identity one immediately concludes the desired estimate \eqref{eq:aprioriestimateeffectiveeq}.   
\end{proof}

We have found that the PDE-constraint \eqref{eq:Mthordereffectiveequation} is well-posed. To prove that the corresponding   effective optimal control problem \eqref{PM} has a unique optimal  solution, we reformulate \eqref{PM}
in terms of the Fourier transform. In the same manner the original   problem \eqref{P} 
can be rewritten in terms of the Bloch transform,  
which is particularly important for the error analysis in Section \ref{sec:errorestimates}. 



\begin{proposition}[Reformulation of the optimal control problems]
\label{prop:reformulationoptimalcontrolBlochFourier}
Let Assumptions \ref{ass:data} and  \ref{ass:admissibleset} hold, and let $U^*_{ad}$ be the effective admissible  set from \eqref{eq.effectiveadmissibleset}. Then
\begin{align}
\begin{split}
\label{eq:twoadmsetsequality}
\hat U_{ad}:=&\left\{\hat u\in L^2(K;\C)\,|\, u_{F}\in U^*_{ad}\,\text{with } u_{F}(x)=\int_K\hat u(\eta)e^{2\pi i\eta\cdot x}\,d\eta\right\}\\
=&\left\{\hat u\in L^2(K;\C)\,|\, u_{B}\in (U^\eps_{ad})_K\,\text{with } u_{B}(x)=\int_K\hat u(\eta)\omega_0^\eps(x;\eta)\,d\eta\right\}.
\end{split}
\end{align}
Furthermore:
\begin{itemize}
\item[(i)]    The optimal control problem  \eqref{P} is equivalent to
\begin{equation}\label{Blochoptim}
\begin{aligned}
\min_{\hat u \in \hat U_{ad}}\hat J^\eps(\hat u)
:=\frac{\mu_1}{2}\int_{K}\left|\frac{\hat f(\eta) + \hat u(\eta)}{1+\lambda_0^\eps(\eta)}-\hat y_{d,1}(\eta)\right|^2(1+\lambda^\eps_0(\eta))\,d\eta\\
 + \frac{\mu_2}{2}\int_{K}\left|\frac{\hat f(\eta) + \hat u(\eta)}{1+\lambda_0^\eps(\eta)}-\hat y_{d,2}(\eta)\right|^2\,d\eta
+\frac{\kappa}{2}\int_{K}|\hat u(\eta)|^2\,d\eta
\end{aligned}
\end{equation}
in the sense that $(u^*_\eps,y^*_\eps)$ solves \eqref{P} if and only if the Bloch transform of $u^*_\eps$ solves \eqref{Blochoptim}
\item[(ii)]   If  $0<\eps<\eps_M$ with $\eps_M$  as in Lemma \ref{lem:auxiliarylemma}, then the effective optimal control problem  \eqref{PM}   is equivalent to
\begin{equation}\label{Fourieroptim}
\begin{aligned}
\min_{\hat u \in \hat U_{ad}} \hat J_M^*(\hat u)
:=\frac{\mu_1}{2}\int_{K}\left|\frac{\hat f(\eta) + \hat u(\eta)}{1+P_M^\eps(\eta)}-\hat y_{d,1}(\eta)\right|^2(1+P_M^\eps(\eta))\,d\eta\\
 + \frac{\mu_2}{2}\int_{K}\left|\frac{\hat f(\eta) + \hat u(\eta)}{1+P_M^\eps(\eta)}-\hat y_{d,2}(\eta)\right|^2\,d\eta
+\frac{\kappa}{2}\int_{K}|\hat u(\eta)|^2\,d\eta
\end{aligned}
\end{equation}
in the sense that $(u^*_M,y^*_M)$ solves \eqref{PM} if and only if the Fourier transform of $u^*_M$ solves  \eqref{Fourieroptim}.
\end{itemize}
\end{proposition}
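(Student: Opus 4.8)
The plan is to prove the set identity \eqref{eq:twoadmsetsequality} first, and then to obtain (i) and (ii) by reducing each optimal control problem — via Lemma \ref{lem:EnergyinBlochform} for \eqref{P}, and via Proposition \ref{prop:wellposednesseffectiveeq} for \eqref{PM} — to a control-reduced minimization over the common set $\hat U_{ad}$. The crucial point for \eqref{eq:twoadmsetsequality} is that, for $\hat u\in L^2(K;\C)$, the Bloch synthesis $u_B(x)=\int_K\hat u(\eta)\omega_0^\eps(x;\eta)\,d\eta$ is exactly the adaption $\mathcal{A}^\eps(u_F)$ of the Fourier synthesis $u_F(x)=\int_K\hat u(\eta)e^{2\pi i\eta\cdot x}\,d\eta$, since $\supp(\widehat{u_F})=\supp(\hat u)\subset K\subset Z/\eps$ and Definition \ref{def:adap} applies verbatim. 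By orthonormality of the rescaled Bloch waves the Bloch coefficients of $u_B$ equal $\hat u$ on $\{0\}\times K$ and vanish otherwise, so $(u_B)_K=u_B$. Hence $\hat u$ lies in the first set $\iff u_F\in U^*_{ad}\iff\mathcal{A}^\eps(u_F)=u_B\in U^\eps_{ad}$, whereas $\hat u$ lies in the second set $\iff u_B\in(U^\eps_{ad})_K$; and for $u_B$ of this special form $u_B\in U^\eps_{ad}\iff u_B\in(U^\eps_{ad})_K$, because ``$\Leftarrow$'' is Assumption \ref{ass:admissibleset}(iii) and ``$\Rightarrow$'' follows from $(u_B)_K\in(U^\eps_{ad})_K$ together with $(u_B)_K=u_B$.

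For part (i), I would use that \eqref{P} has the unique solution $(u^*_\eps,y^*_\eps)$ and that, by Proposition \ref{prop:representationasadaption}, $u^*_\eps=\int_K\hat u^*_{\eps,0}(\eta)\omega_0^\eps(\cdot;\eta)\,d\eta$, so that $\hat u^*_{\eps,0}\in\hat U_{ad}$ by the identity just established. Specializing Lemma \ref{lem:EnergyinBlochform} to a control with all Bloch modes $m\geq 1$ absent and $m=0$ coefficient supported in $K$ makes the integrals over $(Z/\eps)\setminus K$ and the sums over $m\geq 1$ vanish, giving $J^\eps(u^*_\eps,y^*_\eps)=\hat J^\eps(\hat u^*_{\eps,0})$ and, for any $\hat u\in\hat U_{ad}$ with Bloch synthesis $u_B\in(U^\eps_{ad})_K\subset U^\eps_{ad}$, $J^\eps(u_B,S_\epsilon u_B)=\hat J^\eps(\hat u)$. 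Global optimality of $u^*_\eps$ over $U^\eps_{ad}$ then transfers to optimality of $\hat u^*_{\eps,0}$ over $\hat U_{ad}$; conversely a minimizer $\hat w$ of $\hat J^\eps$ yields $w_B\in U^\eps_{ad}$ with $J^\eps(w_B,S_\epsilon w_B)\leq J^\eps(u^*_\eps,y^*_\eps)$, hence $w_B=u^*_\eps$ by uniqueness and $\hat w=\hat u^*_{\eps,0}$. This is the claimed equivalence.

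Part (ii) has the same structure, now based on Proposition \ref{prop:wellposednesseffectiveeq}: for $0<\eps<\eps_M$ every $u\in U^*_{ad}$ (which has $\supp(\hat u)\subset K$) admits a unique state $y(u)\in H^k(\R^n)$ for every $k$, with $\widehat{y(u)}=(\hat f+\hat u)/(1+P_M^\eps)$ on $K$ and $0$ off $K$, so \eqref{PM} reduces to minimizing $u\mapsto J^*_M(u,y(u))$ over $U^*_{ad}$. Evaluating this control-reduced functional in Fourier space by Parseval's identity — using $\supp(\hat y_{d,1}),\supp(\hat y_{d,2})\subset K$ from Assumption \ref{ass:data}, and the term-by-term identity $E^*_M(w)=\int_{\R^n}(1+P_M^\eps(\eta))|\hat w(\eta)|^2\,d\eta$ for $w$ with $\supp(\hat w)\subset K$, which follows since each $\partial_{x_j}$ becomes multiplication by $2\pi i\eta_j$ and the resulting symbol is $1+P_M^\eps(\eta)$ by \eqref{eq:polynomialBloch} — produces exactly $\hat J^*_M(\hat u)$ of \eqref{Fourieroptim}. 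Since by the very definitions of $U^*_{ad}$ and of $\hat U_{ad}$ the map $u\mapsto\hat u$ is a bijection $U^*_{ad}\to\hat U_{ad}$ preserving objective values, $(u^*_M,y^*_M)$ solves \eqref{PM} if and only if the Fourier transform of $u^*_M$ solves \eqref{Fourieroptim}.

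I expect the main obstacle to be the set identity \eqref{eq:twoadmsetsequality}: it is the single place where all hypotheses enter simultaneously — one must recognize $u_B=\mathcal{A}^\eps(u_F)$ through the compact-Fourier-support assumption, exploit the idempotence $(u_B)_K=u_B$ coming from orthonormality of the Bloch waves, and invoke Assumption \ref{ass:admissibleset}(iii) to close the equivalence in both directions. A secondary point requiring care is the Fourier-space evaluation of $E^*_M$, where the factors $(-1)^k$ hidden in $(2\pi i)^{2k}$ must combine with the explicit $(-1)^k$ in \eqref{eq:Mthordereffectiveequation} so that the symbol of $\mathcal{L}^*_M$ is precisely $1+P_M^\eps(\eta)$, in agreement with \eqref{eq:polynomialBloch}.
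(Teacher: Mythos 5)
Your proposal is correct and follows essentially the same route as the paper: the set identity is obtained by recognizing $u_B=\mathcal{A}^\eps(u_F)$ and that $\mathcal{A}^\eps$ maps $U^*_{ad}$ onto $(U^\eps_{ad})_K$ via Assumption \ref{ass:admissibleset}(iii), part (i) combines Proposition \ref{prop:representationasadaption} with the reduced-cost identity \eqref{eq:energycompactsupport} from Lemma \ref{lem:EnergyinBlochform}, and part (ii) is the analogous Fourier-space computation. The extra details you supply (the idempotence $(u_B)_K=u_B$ and the explicit verification that the symbol of $\mathcal{L}^*_M$ and the Fourier form of $E^*_M$ both produce $1+P_M^\eps$) are exactly the steps the paper leaves as ``a direct calculation.''
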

\begin{proof} By virtue of \eqref{eq.effectiveadmissibleset}, \eqref{eq:restrictionK}, and $(U^\eps_{ad})_K \subset U^\eps_{ad}$, we have  $\mathcal{A}^\eps\left(U^*_{ad}\right)=(U^{\eps}_{ad})_K$. 
Furthermore, since for functions with compact support in Fourier space the adaption $\mathcal{A}^\eps$ replaces the Fourier 
basis function $e^{2\pi i\eta\cdot x}$ in the expansion by the Bloch function $\omega_0^\eps(x;\eta)$, it follows that $u_B=\mathcal{A}^\eps(u_F)$.  
In particular the two sets in \eqref{eq:twoadmsetsequality} are equal. 

To prove (i), we  note that by Proposition \ref{prop:representationasadaption} the optimal control $u^*_\eps$ of \eqref{P} lies in the projection 
$(U^\eps_{ad})_K$. This fact along with $\mathcal{A}^\eps\left(U^*_{ad}\right)=(U^{\eps}_{ad})_K$   leads to
$$
\begin{aligned}
u^*_\eps \textrm{ solves }\min_{u \in U^{\eps}_{ad}} J^\eps(u,S_\epsilon u) \quad \Leftrightarrow  \quad u^*_\eps \textrm{ solves } \min_{u \in (U^{\eps}_{ad})_K } J^\eps(u,S_\epsilon u) \\ 
\Leftrightarrow \quad u^*_\eps = \mathcal{A}^\eps(u^*_F) \textrm{ and }  u^*_F \textrm{ solves} \min_{u _F\in U^*_{ad} }  J^\eps(\mathcal A^\eps(u_F),S_\epsilon\mathcal A^\eps(u_F)) \\
\underbrace{\Leftrightarrow}_{ \eqref{eq.effectiveadmissibleset},\, \eqref{eq:energycompactsupport},  \,  \eqref{eq:twoadmsetsequality} } u^*_\eps = \int_K\hat u^*(\eta)\omega_0^\eps(\cdot;\eta)  \,d\eta \textrm{ and }  \hat u^* \textrm{ solves } \min_{\hat u \in \hat U_{ad}}\hat J^\eps(\hat u).
\end{aligned}
$$
The second claim  (ii) follows in an analogous way.   
\end{proof}

    
\begin{remark}
\label{rem:convexcontinuous}
Due to its quadratic form, the effective energy $\hat J^*_M: L^2(K;\C) \to \R$ is convex and   continuous.  
\end{remark}

Finally we prove that the admissible set $\hat U_{ad}$ defined in \eqref{eq:twoadmsetsequality} is nonempty, convex, and  closed. 
Together with Remark \ref{rem:convexcontinuous} this yields the  well-posedness of the effective optimal control problem \eqref{PM} provided $\eps$ is sufficiently small, i.e., 
$\eps<\eps_M$ and   $K\subset Z/\eps$. 

\begin{proposition}
\label{prop:closedhatU}
Let $U^\eps_{ad}$ satisfy Assumption \ref{ass:admissibleset}. Then the set $\hat U_{ad}$ defined as in \eqref {eq:twoadmsetsequality} is nonempty, convex, and closed  in $L^2(K;\C)$. 
\end{proposition}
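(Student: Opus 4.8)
The plan is to identify $\hat U_{ad}$ as the preimage, under an explicit linear isometry, of the intersection of $U^\eps_{ad}$ with a closed subspace, and then read off the three properties. First I would introduce the Bloch synthesis operator $\mathcal{B}_K\colon L^2(K;\C)\to L^2(\R^n;\C)$, $\mathcal{B}_K\hat u:=\int_K\hat u(\eta)\,\omega_0^\eps(\cdot;\eta)\,d\eta$. Extending $\hat u$ by zero to $Z/\eps$, the function $\mathcal{B}_K\hat u$ has all rescaled Bloch coefficients with $m\geq1$ equal to zero, while the $m=0$ coefficient is supported in $K$ and coincides there with $\hat u$; hence Parseval's identity \eqref{eq:Plancherel} yields $\|\mathcal{B}_K\hat u\|_{L^2(\R^n)}=\|\hat u\|_{L^2(K)}$. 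Thus $\mathcal{B}_K$ is a linear isometry, so it is injective, and its range $V_K:=\mathcal{B}_K\big(L^2(K;\C)\big)$ is a closed subspace of $L^2(\R^n;\C)$ (a Cauchy sequence in $V_K$ pulls back to a Cauchy sequence of Fourier-side data, whose limit is synthesised back into $V_K$). One may note that $\mathcal{B}_K\mathcal{B}_K^{*}$ is nothing but the projection $u\mapsto u_K$ of \eqref{eq:restrictionK}.

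The key step is the identity
\[
(U^\eps_{ad})_K=U^\eps_{ad}\cap V_K .
\]
For ``$\subseteq$'', any $u_K$ with $u\in U^\eps_{ad}$ lies in $V_K$ by \eqref{eq:restrictionK} and in $U^\eps_{ad}$ by Assumption \ref{ass:admissibleset}(iii). For ``$\supseteq$'', if $v\in U^\eps_{ad}\cap V_K$, write $v=\mathcal{B}_K\hat v$; by uniqueness of the rescaled Bloch expansion the coefficients of $v$ vanish for all $m\geq1$ and, for $m=0$, are supported in $K$ where they equal $\hat v$, so that $v_K=v$ in the sense of \eqref{eq:restrictionK}, whence $v=v_K\in(U^\eps_{ad})_K$. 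Consequently, by the definition \eqref{eq:twoadmsetsequality},
\[
\hat U_{ad}=\mathcal{B}_K^{-1}\big((U^\eps_{ad})_K\big)=\mathcal{B}_K^{-1}\big(U^\eps_{ad}\cap V_K\big),
\]
where $\mathcal{B}_K^{-1}$ denotes the inverse of the bijection $\mathcal{B}_K\colon L^2(K;\C)\to V_K$.

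The three properties now follow at once. \emph{Nonemptiness:} by Assumption \ref{ass:admissibleset}(ii) there is $u_0\in L^2(\R^n)$ with $\mathcal{A}^\eps(u_0)\in U^\eps_{ad}$; since $\mathcal{A}^\eps(u_0)$ has $m=0$ Bloch coefficient $\hat u_0$ and no higher modes, its projection $\big(\mathcal{A}^\eps(u_0)\big)_K=\mathcal{B}_K\big(\hat u_0|_K\big)$ lies in $(U^\eps_{ad})_K$ by Assumption \ref{ass:admissibleset}(iii), so $\hat u_0|_K\in\hat U_{ad}$. \emph{Convexity:} $U^\eps_{ad}$ is convex by Assumption \ref{ass:admissibleset}(i) and $V_K$ is a subspace, so $U^\eps_{ad}\cap V_K$ is convex, and its preimage under the linear map $\mathcal{B}_K$ is convex. \emph{Closedness:} $U^\eps_{ad}$ is closed by Assumption \ref{ass:admissibleset}(i) and $V_K$ is closed, so $U^\eps_{ad}\cap V_K$ is closed in $L^2(\R^n;\C)$; as $\mathcal{B}_K$ is an isometric isomorphism onto $V_K$, the preimage $\hat U_{ad}$ is closed in $L^2(K;\C)$. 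Equivalently, if $\hat u_j\to\hat u$ in $L^2(K;\C)$, then $\mathcal{B}_K\hat u_j\to\mathcal{B}_K\hat u$ in $L^2(\R^n)$ by the isometry, and since each $\mathcal{B}_K\hat u_j\in(U^\eps_{ad})_K$ and this set is closed, $\mathcal{B}_K\hat u\in(U^\eps_{ad})_K$, i.e.\ $\hat u\in\hat U_{ad}$.

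The only genuinely delicate point is the displayed identity $(U^\eps_{ad})_K=U^\eps_{ad}\cap V_K$: without it one would be forced to prove that the image of a closed convex set under the orthogonal projection $u\mapsto u_K$ is closed, which is false in general. It is precisely Assumption \ref{ass:admissibleset}(iii) — that the projection maps $U^\eps_{ad}$ back into itself — which turns this image into an intersection with a closed subspace and thereby rescues closedness. (As a side remark, since $U^\eps_{ad}\subset L^2(\R^n)$ consists of real-valued functions and $\omega_0^\eps(x;-\eta)=\overline{\omega_0^\eps(x;\eta)}$ by the evenness of $\lambda_0$ and the symmetry \eqref{ass:sym} of $K$, every element of $\hat U_{ad}$ automatically satisfies the Hermitian symmetry $\hat u(-\eta)=\overline{\hat u(\eta)}$; this real-linear constraint is compatible with, and does not affect, any of the arguments above.)
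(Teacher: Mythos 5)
Your proof is correct and follows essentially the same route as the paper's: both reduce the claim to properties of the projected set $(U^\eps_{ad})_K$ and transfer them to $\hat U_{ad}$ via Parseval's identity. Your explicit identity $(U^\eps_{ad})_K=U^\eps_{ad}\cap V_K$ simply spells out the step that the paper's closedness argument leaves implicit.
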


\begin{proof}
The set $\hat U_{ad}$ is nonempty since for every $u\in U^\eps_{ad} \neq \emptyset$ the projection $u_K$ satisfies $u_K\in \left( U^\eps_{ad}\right)_K$ with the    Bloch coefficient  belonging to $\hat U_{ad}$ according to \eqref{eq:twoadmsetsequality}. 

In view of \eqref{eq:twoadmsetsequality}, the convexity of $\hat U_{ad} \subset L^2(K;\C)$ holds   if $U^*_{ad} \subset L^2(\R^n)$ is convex. By the linearity     of    the adaption $\mathcal{A}^\eps$ and the definition of $U^*_{ad}$  \eqref{eq.effectiveadmissibleset}, the convexity of $U^\eps_{ad} \subset L^2(\R^n)$ implies immediately the convexity of $U^*_{ad} \subset L^2(\R^n)$.

Due to Parseval's identity \eqref{eq:Plancherel}, strong convergence of functions in $L^2(\R^n)$ is equivalent to strong $L^2$-convergence of the corresponding Bloch transforms.   Thus,  it follows from  \eqref{eq:restrictionK}, $(U^\eps_{ad})_K \subset U^\eps_{ad}$, and the closedness of $U^\eps_{ad} \subset L^2(\R^n)$ that  the projected set 
$(U^\eps_{ad})_K$ is    closed in $L^2(\R^n)$, and so by \eqref{eq:twoadmsetsequality} the closedness of $\hat U_{ad} \subset L^2(K;\C)$ follows. \end{proof}

\begin{corollary} Let Assumptions  \ref{ass:data} and   \ref{ass:admissibleset} hold. Furthermore,
let $M\in\N$ and let $\eps_M$ be as in Lemma \ref{lem:auxiliarylemma}. Then, for every $0<\eps<\eps_M$, the effective optimal control problem \eqref{PM} admits a unique optimal solution $(u^*_M, y^*_M)$.
\end{corollary}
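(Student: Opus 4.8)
The plan is to reduce \eqref{PM} to its Fourier reformulation and then run the standard direct method for strictly convex, coercive functionals on a closed convex set in a Hilbert space. Concretely, since $0<\eps<\eps_M$, I would first invoke Proposition~\ref{prop:reformulationoptimalcontrolBlochFourier}(ii), which tells us that \eqref{PM} is equivalent to minimizing $\hat J^*_M$ over the set $\hat U_{ad}\subset L^2(K;\C)$, the equivalence being realized via $u^*_M(x)=\int_K\hat u^*(\eta)e^{2\pi i\eta\cdot x}\,d\eta$ with $\hat u^*$ a minimizer of $\hat J^*_M$. So it suffices to show the reduced problem has a unique minimizer. By Proposition~\ref{prop:closedhatU} the feasible set $\hat U_{ad}$ is nonempty, convex, and closed in $L^2(K;\C)$, and by Remark~\ref{rem:convexcontinuous} the functional $\hat J^*_M$ is convex and continuous; what remains to supply is coercivity and strict convexity.

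For coercivity I would use that $\eps<\eps_M$: by Lemma~\ref{lem:auxiliarylemma} one has $1+P^\eps_M(\eta)\ge 1+\tfrac{\lambda}{2}|\eta|^2>0$ for all $\eta\in K$, so the first two terms in the definition of $\hat J^*_M$ are nonnegative (recall $\mu_1,\mu_2\ge 0$), whence $\hat J^*_M(\hat u)\ge \tfrac{\kappa}{2}\|\hat u\|_{L^2(K;\C)}^2$. Since $\kappa>0$ this forces $\hat J^*_M(\hat u)\to+\infty$ as $\|\hat u\|_{L^2(K;\C)}\to\infty$. The same quadratic term $\hat u\mapsto\tfrac{\kappa}{2}\|\hat u\|_{L^2(K;\C)}^2$ is strictly convex while the remaining contributions are convex, so $\hat J^*_M$ is strictly convex.

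With these properties in hand, existence follows the usual route: take a minimizing sequence, which is bounded by coercivity; extract a weakly convergent subsequence; the limit lies in $\hat U_{ad}$ because a convex closed set is weakly closed; and $\hat J^*_M$, being convex and continuous, is weakly lower semicontinuous, so the weak limit is a minimizer $\hat u^*_M$. Uniqueness of $\hat u^*_M$ is immediate from strict convexity. Finally, unwinding Proposition~\ref{prop:reformulationoptimalcontrolBlochFourier}(ii), I would set $u^*_M(x):=\int_K\hat u^*_M(\eta)e^{2\pi i\eta\cdot x}\,d\eta\in U^*_{ad}$ and let $y^*_M$ be the weak solution of the effective state equation \eqref{eq:Mthordereffectiveequation} with data $f+u^*_M$; this solution exists, is unique, and lies in $H^{2M}(\R^n)$ by Proposition~\ref{prop:wellposednesseffectiveeq}, since $\supp\hat u^*_M\subset K$ and $\eps<\eps_M$. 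Then $(u^*_M,y^*_M)$ is the unique optimal solution of \eqref{PM}.

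The only step requiring genuine care is the coercivity estimate: a priori the higher-order tensors $\mathbb{A}^*_4,\dots,\mathbb{A}^*_{2M}$ may be sign-indefinite (indeed $\mathbb{A}^*_4$ is negative semidefinite), so without a smallness condition on $\eps$ the term $\tfrac{\mu_1}{2}\int_K|\cdots|^2(1+P^\eps_M)\,d\eta$ need not even be bounded below. This is precisely what the threshold $\eps_M$ from Lemma~\ref{lem:auxiliarylemma} fixes: once $1+P^\eps_M>0$ on $K$, the indefinite tensors are harmless and the $\kappa$-term alone delivers coercivity and strict convexity. Everything else is the textbook Hilbert-space argument, so I do not expect any further obstacle.
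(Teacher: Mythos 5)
Your argument is correct and follows essentially the same route the paper intends: the corollary is stated right after Proposition \ref{prop:reformulationoptimalcontrolBlochFourier}, Remark \ref{rem:convexcontinuous}, and Proposition \ref{prop:closedhatU}, precisely so that existence and uniqueness follow from the Fourier reformulation plus the standard direct method on the nonempty, closed, convex set $\hat U_{ad}$. You merely make explicit the coercivity (via the $\kappa$-term, once Lemma \ref{lem:auxiliarylemma} guarantees $1+P^\eps_M>0$ on $K$) and strict convexity that the paper leaves implicit.
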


To conclude this section we specify the effective admissible sets $\hat U_{ad}$ and $U^*_{ad}$ corresponding to the sets from Example \ref{ex:admissiblesets}. 
At this point, we recall that 
$u\in L^2(\R^d;\C)$ is real valued if and only if $\hat u(-\eta)=\overline{\hat u(\eta)}$. An analogous statement holds for the Bloch transforms.
\noindent
\begin{example}[Effective admissible sets]
Suppose that Assumption \ref{ass:data} holds.  Let $\hat U_{ad}$ be defined through \eqref {eq:twoadmsetsequality} and $U^*_{ad}$ through \eqref{eq.effectiveadmissibleset}.  
\begin{itemize} 
\item[(i)] For $U^\eps_{ad}=L^2(\R^n)$ one has 
\begin{align*}
\hat U_{ad}&=\{L^2(K;\C)\,:\, \hat u(-\eta)=\overline{\hat u(\eta)}\},\\ 
U^*_{ad}&=\left\{u\in L^2(\R^n)\,|\, \supp(\hat u)\subset K\right\}. 
\end{align*}
\item[(ii)] For $U^\eps_{ad}:=\{u\in L^2(\R^n)\,|\,\|u\|_{L^2(\R^n)}\leq L\}$ one has due to Pareval's identity 
\begin{align*}
\hat U_{ad}&=\{\hat u\in L^2(K;\C)\,|\,\hat u(-\eta)=\overline{\hat u(\eta)},\,\|\hat u\|_{L^2(K;\C)}\leq L\},\\
 U^*_{ad}&=\left\{u\in L^2(\R^n)\,|\, \supp(\hat u)\subset K\text{ and } \|u\|_{L^2(\R^n)}\leq L\right\}.
\end{align*}
\end{itemize}
The specification of the admissible sets for $U^\eps_{ad}$ from Example \ref{ex:admissiblesets} in (iii) and (iv) is more subtle and does not allow for a simple form at first sight. 
\begin{itemize}
 \item[(iii)] For $U^\eps_{ad}:=\left\{u\in L^2(\R^n)\,\Big|\,\|S_\epsilon u \|_{L^2(\R^n)}\leq L\right\}$ one obtains, using Lemma \ref{lem:EnergyinBlochform}, 
 \begin{align*}
\hat U_{ad}&=\left\{\hat u\in L^2(K;\C)\,|\,\hat u(-\eta)=\overline{\hat u(\eta)},\, \left\|\tfrac{\hat f+\hat u}{1+\lambda^\eps_0}\right\|_{L^2(K;\C)}\leq L\right\},\\
U^*_{ad}&=\left\{u(x)=\int_K \hat u(\eta)e^{2\pi i\eta\cdot x}\,d\eta\,|\, \hat u\in \hat U_{ad}\right\}.
\end{align*}
 \item[(iv)] For $U^\eps_{ad}:=\left\{u\in L^2(\R^n)\,\Big|E^\eps(S_\epsilon u)\leq L\right\}$ one obtains, using again Lemma \ref{lem:EnergyinBlochform}, 
 \begin{align*}
\hat U_{ad}&=\left\{\hat u\in L^2(K;\C)\,|\,\hat u(-\eta)=\overline{\hat u(\eta)},\,\int_K \frac{|\hat f+\hat u|^2(\eta)}{1+\lambda_0^\eps(\eta)}\,d\eta\leq L\right\},\\
U^*_{ad}&=\left\{u(x)=\int_K \hat u(\eta)e^{2\pi i\eta\cdot x}\,d\eta\,|\, \hat u\in \hat U_{ad}\right\}.
\end{align*}
\end{itemize}
The characterization of the effective set $\hat U_{ad}$ for the above two examples uses the $\eps$-dependent Bloch eigenvalue $\lambda_0^\eps$. To derive a more practicable 
condition one can again replace $\lambda_0^\eps$ by its $M$-th order Taylor expansion $P_M^\eps$. The resulting approximative admissible set $\hat U^M_{ad}$ is close to $\hat U_{ad}$. 
Due to stability of the optimal control problem the minimization can be performed on this approximative set without loosing the approximation quality. 
Recalling that solutions to the effective PDE-constaint \eqref{eq:Mthordereffectiveequation} have the Fourier transform 
$\hat y=\frac{\hat f+\hat u}{1+P_M^\eps}$, it follows  that
\begin{itemize}
 \item[(iii)]
 \begin{align*}
 \hat U^M_{ad}:=&\left\{\hat u\in L^2(K;\C)\,|\,\hat u(-\eta)=\overline{\hat u(\eta)},\, \left\|\tfrac{\hat f+\hat u}{1+P_M^\eps}\right\|_{L^2(K;\C)}\leq L\right\}\\
 =&\left\{\hat u\in L^2(K,\C)\,|\,\hat u(-\eta)=\overline{\hat u(\eta)},\, \|\hat y\|_{L^2(K;\C)}\leq L\right\}.
 \end{align*}
 \item[(iv)] In view of  the definition \eqref{eq:energyeffective equation} of $E^*_M$, denoting by $\mathcal{F}$ the Fourier transform, one obtains
 \begin{align*}
 \hat U^M_{ad}:=&\left\{\hat u\in L^2(K;\C)\,|\,\hat u(-\eta)=\overline{\hat u(\eta)},\,\int_K \frac{|\hat f+\hat u|^2(\eta)}{1+P_M^\eps(\eta)}\,d\eta\leq L\right\}\\
 =&\left\{\hat u\in L^2(K;\C)\,|\,\hat u(-\eta)=\overline{\hat u(\eta)},\,\int_K |\hat y|^2(\eta)(1+P_M^\eps(\eta))\,d\eta\leq L\right\}\\
 =&\left\{\hat u\in L^2(K;\C)\,|\,\hat u(-\eta)=\overline{\hat u(\eta)},\,\hat u=\mathcal{F}(  u)\text{ and } E^*_M(  y)\leq L,\,\text{where }  y\text{ solves }\eqref{eq:Mthordereffectiveequation}\right\}.
 \end{align*}
\end{itemize}
\end{example}

\section{Error estimates}
\label{sec:errorestimates}
Proposition \ref{prop:reformulationoptimalcontrolBlochFourier} allows to rewrite  the original and the effective optimal control problems  in terms of the Fourier/Bloch transforms. 
The fact that both minimization problems \eqref{Blochoptim} and \eqref{Fourieroptim} are constrained by the same admissible set $\hat U_{ad}$ enables us to compare them by 
means of the corresponding variational inequalities. 

In all what follows, we  assume that the order of approximation $M\in\N$ is fixed and that $\eps>0$ is sufficiently small, i.e. $\eps<\eps_M$ and such that $K\subset Z/\eps$.  
We will see that the unique solution $\hat u^*_M \in \hat U_{ad}$ of \eqref{Fourieroptim} is a good approximation  the Bloch transform $\hat u^*_{\eps,0}$ of the optimal control $u^*_\eps$ of $\eqref{P}$, which is exactly the unique solution to \eqref{Blochoptim}. 
This relies on the fact that the cost functionals $\hat J^\eps$ and $\hat J^*_M$ in \eqref{Blochoptim} and \eqref{Fourieroptim} differ only by the factor $1+\lambda_0^\eps$, which in $\hat J^*_M$ is replaced by $1+P^\eps_M$. 
As a consequence, the (unique) optimal control  of \eqref{PM}, which is according to Proposition \ref{prop:reformulationoptimalcontrolBlochFourier} given  by 
\begin{equation}
 \label{eq:effectivecontrolfinalapp}
   u^*_M(x)=\int_K \hat u^*_M(\eta)e^{2\pi i\eta\cdot x}\,d\eta,
\end{equation}
and   the corresponding optimal state $y^*_M$, 
given by the solution to the effective equation
\begin{equation}
\label{eq:effectiveequationfinalapp}
\sum_{k=1}^M\eps^{2k-2}(-1)^{k}\mathbb{A}^*_{2k}D^{2k}  y^*_M(x) +   y^*_M(x)
=f(x)+   u^*_M(x),
\end{equation}
satisfy the desired approximation \eqref{eq:approximationerrorcontrols} with $(\tilde u^*_\epsilon,\tilde y^*_\epsilon)$ as     in   Proposition \ref{prop:representationasadaption}.  Let us now prove our first main result, which states the approximation property of $\hat u^*_M$:

\begin{theorem}[Approximation of the optimal control]
\label{thm:mainestimatecontrols}
Let    $\hat u^*_{\eps,0}\in \hat U_{ad}$ and $\hat u^*_M\in \hat U_{ad}$ denote, respectively, the unique solutions to
  \eqref{Blochoptim} and  \eqref{Fourieroptim}.
Then 
\begin{equation}
\label{eq_estimatecontrolapproximation}
\|\hat u^*_{\eps,0}-\hat u^*_M\|_{L^2(K;\C)}\leq C\eps^{2M}
\end{equation}  
for some $\eps$-independent constant $C>0$.   
\end{theorem}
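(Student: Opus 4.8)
The strategy is to compare the two minimization problems \eqref{Blochoptim} and \eqref{Fourieroptim} via their variational inequalities, exploiting that both are posed over the \emph{same} convex closed set $\hat U_{ad}$ and that their cost functionals differ only through the replacement of $1+\lambda_0^\eps(\eta)$ by $1+P_M^\eps(\eta)$, whose difference is $O(\eps^{2M})$ uniformly on the compact set $K$. First I would write down the first-order optimality conditions. Since $\hat J^\eps$ and $\hat J^*_M$ are convex, continuous (Remark \ref{rem:convexcontinuous}), and Fréchet differentiable, $\hat u^*_{\eps,0}$ and $\hat u^*_M$ are characterized by
\begin{equation*}
\Re\,(\hat J^\eps)'(\hat u^*_{\eps,0})(\hat v-\hat u^*_{\eps,0})\geq 0,\qquad
\Re\,(\hat J^*_M)'(\hat u^*_M)(\hat v-\hat u^*_M)\geq 0\qquad\forall\,\hat v\in\hat U_{ad}.
\end{equation*}
Next I would compute these derivatives explicitly in Fourier/Bloch space; because the functionals are integrals of quadratic expressions in $\hat u$, the derivatives are affine in $\hat u$. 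Testing the first inequality with $\hat v=\hat u^*_M$ and the second with $\hat v=\hat u^*_{\eps,0}$ and adding yields, after rearrangement, a coercive lower bound of the form $c\,\|\hat u^*_{\eps,0}-\hat u^*_M\|_{L^2(K;\C)}^2 \le$ (terms measuring the discrepancy between the two functionals evaluated at the two minimizers).

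The coercivity constant $c$ comes from the $\tfrac{\kappa}{2}\int_K|\hat u|^2$ term, which is identical in both functionals and contributes a strictly convex quadratic with constant $\kappa>0$; the $\mu_1$- and $\mu_2$-parts contribute nonnegative quadratic forms (modulo the $\lambda_0^\eps$ versus $P_M^\eps$ mismatch), so the net quadratic form controlling $\|\hat u^*_{\eps,0}-\hat u^*_M\|^2$ is bounded below by $\kappa$ up to $O(\eps^{2M})$ corrections. For the right-hand side, the key point is that after the test-and-add step, every term carries a factor of the form $\tfrac{1}{1+\lambda_0^\eps}-\tfrac{1}{1+P_M^\eps}$ or $(1+\lambda_0^\eps)-(1+P_M^\eps)$ (or products thereof). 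Using $0\le\lambda_0^\eps(\eta)$, $P_M^\eps(\eta)\ge\tfrac{\lambda}{2}|\eta|^2\ge 0$ on $K$ (Lemma \ref{lem:auxiliarylemma}), and the uniform bound $\|\lambda_0^\eps-P_M^\eps\|_{L^\infty(K)}\le C\eps^{2M}$ from the Taylor expansion \eqref{eq:expansionlambdaeps}, each such factor is $O(\eps^{2M})$ in $L^\infty(K)$. The remaining factors are controlled by $\|\hat f\|_{L^2(K)}$, $\|\hat y_{d,1}\|$, $\|\hat y_{d,2}\|$ (all fixed, using the $H^1$-boundedness of the $\mu_1$-term as in the proof of Lemma \ref{lem:assumption2}), together with $\|\hat u^*_{\eps,0}\|_{L^2(K)}$ and $\|\hat u^*_M\|_{L^2(K)}$, both of which are $\eps$-uniformly bounded — the former by Lemma \ref{lem:assumption2} (via $u^*_\eps=\mathcal A^\eps(\tilde u^*_\eps)$ and Parseval), the latter because $\hat J^*_M(\hat u^*_M)\le \hat J^*_M(\hat u_0)$ for the fixed admissible competitor supplied by Assumption \ref{ass:admissibleset}(ii), bounded uniformly by the same computation.

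Combining these two estimates gives $c\,\|\hat u^*_{\eps,0}-\hat u^*_M\|_{L^2(K;\C)}^2 \le C\eps^{2M}\,\|\hat u^*_{\eps,0}-\hat u^*_M\|_{L^2(K;\C)} + C\eps^{2M}\|\hat u^*_{\eps,0}-\hat u^*_M\|_{L^2(K;\C)}$, hence after dividing and absorbing, $\|\hat u^*_{\eps,0}-\hat u^*_M\|_{L^2(K;\C)}\le C\eps^{2M}$, which is the claim. I expect the main obstacle to be bookkeeping rather than conceptual: carefully expanding the cross terms produced by the test-and-add step and grouping them so that each one visibly carries a full factor of $\lambda_0^\eps-P_M^\eps$ (or its reciprocal analogue), while making sure no term is merely $O(\eps^{2M-2})$ or $O(\eps^{2})$. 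In particular one must check that the $\mu_1$-term, which contains the \emph{positive} weight $1+\lambda_0^\eps$ rather than its reciprocal, is handled so that the discrepancy $(1+\lambda_0^\eps)-(1+P_M^\eps)=\lambda_0^\eps-P_M^\eps$ appears linearly with an $L^2(K)$-bounded companion factor; this uses that the relevant quantities $\tfrac{\hat f+\hat u}{1+\lambda_0^\eps}-\hat y_{d,1}$ are bounded in $L^2(K)$ uniformly in $\eps$, again by the a priori estimates above. The symmetry hypothesis \eqref{ass:sym} ensures all the functions involved stay real-valued so that taking real parts in the variational inequalities causes no loss.
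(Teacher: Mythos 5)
Your proposal follows essentially the same route as the paper's proof: write the two variational inequalities over the common set $\hat U_{ad}$, cross-test and add, use the $\kappa$-term for coercivity, and bound the remaining terms by the uniform $O(\eps^{2M})$ discrepancy between $\tfrac{1}{1+\lambda_0^\eps}$ and $\tfrac{1}{1+P_M^\eps}$ on $K$ together with the uniform control bounds. The only cosmetic difference is that the paper avoids needing a separate uniform bound on $\hat u^*_M$ by splitting off the term $-\tfrac{1}{1+P_M^\eps}|\hat u^*_M-\hat u^*_{\eps,0}|^2\le 0$ and discarding it, so that only $\|\hat f+\hat u^*_{\eps,0}\|_{L^2(K;\C)}$ and $\|\hat y_{d,2}\|_{L^2(K;\C)}$ appear on the right-hand side.
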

We emphasize that the constant $C$ in Theorem \ref{thm:mainestimatecontrols} may depend on the order of approximation $M$. 

\begin{proof}[Proof of Theorem \ref{thm:mainestimatecontrols}] As the optimization problems \eqref{Blochoptim} and \eqref{Fourieroptim} are convex with directionally differentiable cost funcitonals,
their solutions $\hat u^*_{\eps,0}$ and $\hat u^*_M$ satisfy the following variational inequalities 
\begin{align}
\label{eq:varinequality1}
\int_K\text{Re}\left[\left(D\hat J^\eps(\hat u^*_{\eps,0})\right)(\eta)(\overline{\hat u}-\overline{\hat u^*_{\eps,0}})(\eta)\right]\,d\eta &\geq 0\quad\text{for all }\hat u\in\hat U_{ad},\\
\label{eq:varinequality2}
\int_K\text{Re}\left[\left(D\hat J^*_{M}(\hat u^*_{M})\right)(\eta)(\overline{\hat u}-\overline{\hat u^*_{M}})(\eta)\right]\,d\eta &\geq 0\quad\text{for all }\hat u\in\hat U_{ad},
\end{align}
where $\text{Re}$ denotes the real part and
\begin{align*}
D\hat J^\eps(\hat u):=&\mu_1\left(\frac{\hat f + \hat u}{1+\lambda_0^\eps}-\hat y_{d,1}\right)
+\mu_2\left(\frac{\hat f + \hat u}{1+\lambda_0^\eps}-\hat y_{d,2}\right)(1+\lambda_0^\eps)^{-1} 
+ \kappa\hat u,\\
D\hat J^*_{M}(\hat u):=&
\mu_1\left(\frac{\hat f + \hat u}{1+P_M^\eps}-\hat y_{d,1}\right)
+\mu_2\left(\frac{\hat f + \hat u}{1+P_M^\eps}-\hat y_{d,2}\right)(1+P_M^\eps)^{-1} 
+ \kappa\hat u.
\end{align*}
Taking $\hat u^*_{\eps,0}$ as a test function in \eqref{eq:varinequality2} and $\hat u^*_{M}$ as test function in \eqref{eq:varinequality1}, we obtain
\begin{align*}
\int_K \text{Re}\left[\left(\mu_1\left(\tfrac{\hat f + \hat u^*_{M}}{1+P_M^\eps}-\hat y_{d,1}\right)
+\mu_2\left(\tfrac{\hat f + \hat u^*_{M}}{1+P_M^\eps}-\hat y_{d,2}\right)(1+P_M^\eps)^{-1} 
+ \kappa\hat u^*_{M}\right)(\overline{\hat u^*_{\eps,0}}-\overline{\hat u^*_{M}})\right](\eta)\,d\eta&\geq 0,\\
\int_K\text{Re}\left[\left(\mu_1\left(\tfrac{\hat f + \hat u^*_{\eps,0}}{1+\lambda_0^\eps}-\hat y_{d,1}\right)
+\mu_2\left(\tfrac{\hat f + \hat u^*_{\eps,0}}{1+\lambda_0^\eps}-\hat y_{d,2}\right)(1+\lambda_0^\eps)^{-1} 
+ \kappa\hat u^*_{\eps,0}\right)(\overline{\hat u^*_{M}}-\overline{\hat u^*_{\eps,0}})\right](\eta)\,d\eta&\geq 0.
\end{align*}
Summing up the two inequalities provides
\begin{align}
\begin{split}
\label{eq:estimateuMustar}
 &\kappa\int_K\left|\hat u^*_{\eps,0}(\eta)-\hat u^*_{M}(\eta)\right|^2(\eta)\,d\eta \\
 \leq \, &\mu_1\int_K\text{Re}
\left[\left(\frac{\hat f + \hat u^*_{M}}{1+P_M^\eps}- 
\frac{\hat f + \hat u^*_{\eps,0}}{1+\lambda_0^\eps}\right)\left(\overline{\hat u^*_{\eps,0}}-\overline{\hat u^*_{M}}\right)\right](\eta)\,d\eta\\
+&\mu_2\int_K\text{Re}\left[
\left(\frac{\hat f + \hat u^*_{M}-(1+P_M^\eps)\hat y_{d,2}}{(1+P_M^\eps)^2}- 
\frac{\hat f + \hat u^*_{\eps,0}-(1+\lambda_0^\eps)\hat y_{d,2}}{(1+\lambda_0^\eps)^2}\right)\left(\overline{\hat u^*_{\eps,0}}-\overline{\hat u^*_{M}}\right)\right](\eta)\,d\eta.
\end{split}
\end{align}
Concerning the first term on the right hand side of \eqref{eq:estimateuMustar} we note that 
\begin{align*}
\frac{\hat f + \hat u^*_{M}}{1+P_M^\eps}- 
\frac{\hat f + \hat u^*_{\eps,0}}{1+\lambda_0^\eps}
=(\hat f + \hat u^*_{\eps,0})\left(\frac{1}{1+P_M^\eps}-\frac{1}{1+\lambda_0^\eps}\right)
+ \frac{1}{1+P^\eps_M}\left(\hat u^*_{M}-\hat u^*_{\eps,0}\right)
\end{align*}
and thus, exploiting that
\begin{equation*}
 \frac{1}{1+P^\eps_M}\left(\hat u^*_{M}-\hat u^*_{\eps,0}\right)\left(\overline{\hat u^*_{\eps,0}}-\overline{\hat u^*_{M}}\right)
=-\frac{1}{1+P^\eps_M}|\hat u^*_{M}-\hat u^*_{\eps,0}|^2\leq 0,
\end{equation*} 
it follows that
\begin{align*}
&\mu_1\int_K\text{Re}
\left[\left(\frac{\hat f + \hat u^*_{M}}{1+P_M^\eps}- 
\frac{\hat f + \hat u^*_{\eps,0}}{1+\lambda_0^\eps}\right)\left(\overline{\hat u^*_{\eps,0}}-\overline{\hat u^*_{M}}\right)\right](\eta)\,d\eta\\
\leq&\,\mu_1 \left(\sup_{\eta\in K}\left|\frac{1}{1+P_M^\eps(\eta)}-\frac{1}{1+\lambda_0^\eps(\eta)}\right|\right)
\|\hat u^*_{\eps,0}-\hat u^*_{M}\|_{L^2(K;\C)}\|\hat f + \hat u^*_{\eps,0}\|_{L^2(K;\C)}\\
\leq& C\eps^{2M}\,\|\hat u^*_{\eps,0}-\hat u^*_{M}\|_{L^2(K;\C)}\|\hat f + \hat u^*_{\eps,0}\|_{L^2(K;\C)},
\end{align*}
where in the last step we have used that $|P_M^\eps(\eta)-\lambda_0^\eps(\eta)|\leq C\eps^{2M}$ uniformly in $\eta\in K$. 
For the the second term on the right hand side of \eqref{eq:estimateuMustar} we argue analogously,
\begin{align*}
&\mu_2\int_K\text{Re}\left[
\left(\frac{\hat f + \hat u^*_{M}-(1+P_M^\eps)\hat y_{d,2}}{(1+P_M^\eps)^2}- 
\frac{\hat f + \hat u^*_{\eps,0}-(1+\lambda_0^\eps)\hat y_{d,2}}{(1+\lambda_0^\eps)^2}\right)\left(\overline{\hat u^*_{\eps,0}}-\overline{\hat u^*_{M}}\right)\right](\eta)\,d\eta\\
\leq \, &\mu_2 \left(\sup_{\eta\in K}\left|\frac{1}{(1+P_M^\eps(\eta))^2}-\frac{1}{(1+\lambda_0^\eps(\eta))^2}\right|\right)
\|\hat u^*_{\eps,0}-\hat u^*_{M}\|_{L^2(K)}\|\hat f + \hat u^*_{\eps,0}\|_{L^2(K;\C)}\\
+&\mu_2 \left(\sup_{\eta\in K}\left|\frac{1}{1+\lambda_0^\eps(\eta)}-\frac{1}{1+P^\eps_M(\eta)}\right|\right)
\|\hat u^*_{\eps,0}-\hat u^*_{M}\|_{L^2(K;\C)}\|\hat y_{d,2}\|_{L^2(K;\C)}\\
\leq& C\eps^{2M}\,\|\hat u^*_{\eps,0}-\hat u^*_{M}\|_{L^2(K;\C)}\left(\|\hat f + \hat u^*_{\eps,0}\|_{L^2(K;\C)} + \|\hat y_{d,2}\|_{L^2(K;\C)}\right). 
\end{align*}
Summing up, 
\begin{equation*}
\kappa\|\hat u^*_{\eps,0}-\hat u^*_{M}\|^2_{L^2(K;\C)}
 \leq C\eps^{2M}\left(2\|\hat f + \hat u^*_{\eps,0}\|_{L^2(K;\C)} + \|\hat y_{d,2}\|_{L^2(K;\C)}\right)\|\hat u^*_{\eps,0}-\hat u^*_{M}\|_{L^2(K;\C)}.
\end{equation*}
This is the claimed result \eqref{eq_estimatecontrolapproximation}, since by Parseval's identity 
$\| \hat u^*_{\eps,0}\|_{L^2(K;\C)}=\|u^*_\eps\|_{L^2(\R^n)}$, and $\|u^*_\eps\|_{L^2(\R^n)}$ is uniformly bounded (Lemma  \ref{lem:assumption2}). 
\end{proof}
Exploiting Parseval's identity along with \eqref{eq:defutildeeps} and \eqref{eq:effectivecontrolfinalapp}, a straightforward consequence of Theorem \ref{thm:mainestimatecontrols}    is the following approximation property for the optimal control of \eqref{PM}:
\begin{corollary}
\label{cor:errorcontrols} Let $\tilde u^*_\eps$  be  as in \eqref{eq:defutildeeps}. Then 
the unique optimal control $u^*_M$ to \eqref{PM}  satisfies 
\begin{equation}
\|\tilde u^*_\eps-  u^*_M\|_{L^2(\R^n)}\leq C\eps^{2M}
\end{equation}
for some $\eps$-independent constant $C>0$. 
\end{corollary}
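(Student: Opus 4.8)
The plan is to deduce the corollary directly from Theorem~\ref{thm:mainestimatecontrols} by transferring the estimate on the Bloch/Fourier coefficients to an estimate on the functions themselves via Parseval's identity for the classical Fourier transform. First I would observe that both $\tilde u^*_\eps$ and $u^*_M$ are band-limited: by \eqref{eq:defutildeeps} and \eqref{eq:effectivecontrolfinalapp} they are the inverse Fourier transforms of $L^2(\R^n;\C)$-functions supported in the compact set $K$; hence, by Fourier inversion, the Fourier transform of $\tilde u^*_\eps$ vanishes outside $K$ and equals $\hat u^*_{\eps,0}$ on $K$, and likewise the Fourier transform of $u^*_M$ vanishes outside $K$ and equals $\hat u^*_M$ on $K$.

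The second step is simply to apply Parseval's identity to the difference:
\begin{equation*}
\|\tilde u^*_\eps - u^*_M\|^2_{L^2(\R^n)}
= \int_{K}\bigl|\hat u^*_{\eps,0}(\eta) - \hat u^*_M(\eta)\bigr|^2\,d\eta
= \|\hat u^*_{\eps,0} - \hat u^*_M\|^2_{L^2(K;\C)},
\end{equation*}
and to invoke Theorem~\ref{thm:mainestimatecontrols}, which bounds the right-hand side by $C^2\eps^{4M}$. Taking square roots then gives the claim with an $\eps$-independent constant $C$ (possibly depending on $M$).

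I expect no real obstacle here: the entire substance of the estimate is already contained in Theorem~\ref{thm:mainestimatecontrols}, which is proved by testing the two variational inequalities \eqref{eq:varinequality1} and \eqref{eq:varinequality2} against each other and exploiting the uniform bound $|P_M^\eps(\eta)-\lambda_0^\eps(\eta)|\leq C\eps^{2M}$ on $K$ together with the uniform control bound of Lemma~\ref{lem:assumption2}. The only bookkeeping step is the identification of the Fourier transform of the band-limited function $\tilde u^*_\eps$ (resp.\ $u^*_M$) with the coefficient $\hat u^*_{\eps,0}$ (resp.\ $\hat u^*_M$) extended by zero outside $K$, which is immediate from Fourier inversion; note that by the symmetry of $K$ (Assumption~\ref{ass:data}) together with condition~(iii) of Assumption~\ref{ass:admissibleset} these coefficients satisfy $\hat u(-\eta)=\overline{\hat u(\eta)}$, so that $\tilde u^*_\eps$ and $u^*_M$ are real-valued, although this is not needed for the computation above.
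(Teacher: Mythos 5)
Your argument is correct and is exactly the paper's route: the paper derives the corollary as a direct consequence of Theorem~\ref{thm:mainestimatecontrols} by applying Parseval's identity to the band-limited representations \eqref{eq:defutildeeps} and \eqref{eq:effectivecontrolfinalapp}. The extra remarks about real-valuedness are harmless but, as you note, not needed.
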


We next show that the solution $  y^*_M$ of \eqref{eq:effectiveequationfinalapp} is the desired approximation of $\tilde y^*_\eps$. 

\begin{proposition}[Approximation of the solution to the PDE constraint]
\label{prop:approximationPDEconstraint}
Let $  y^*_M$ denote the solution to the effective equation \eqref{eq:effectiveequationfinalapp} and let $\tilde y^*_\eps$ be as in  \eqref{eq:defytildeeps}. 
Then 
\begin{equation}
\|\tilde y^*_\eps-  y^*_M\|_{L^2(\R^n)}\leq C\eps^{2M}
\end{equation}
for an $\eps$-independent constant $C$. 
\end{proposition}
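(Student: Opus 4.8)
The plan is to work entirely in Fourier space, exploiting the explicit representations of both $\tilde y^*_\eps$ and $y^*_M$ in terms of Bloch/Fourier coefficients. First I would recall from \eqref{eq:defytildeeps} that
\[
\tilde y^*_\eps(x)=\int_K\frac{\hat f(\eta)+\hat u^*_{\eps,0}(\eta)}{1+\lambda_0^\eps(\eta)}e^{2\pi i\eta\cdot x}\,d\eta,
\]
and from Proposition \ref{prop:wellposednesseffectiveeq} (the representation \eqref{eq:representationFouriereffectivesolution} with $u=u^*_M$) that
\[
y^*_M(x)=\int_K\frac{\hat f(\eta)+\hat u^*_M(\eta)}{1+P^\eps_M(\eta)}e^{2\pi i\eta\cdot x}\,d\eta,
\]
so that both functions have Fourier support in $K$. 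Hence, by Parseval's identity for the Fourier transform,
\[
\|\tilde y^*_\eps-y^*_M\|^2_{L^2(\R^n)}=\int_K\left|\frac{\hat f(\eta)+\hat u^*_{\eps,0}(\eta)}{1+\lambda_0^\eps(\eta)}-\frac{\hat f(\eta)+\hat u^*_M(\eta)}{1+P^\eps_M(\eta)}\right|^2\,d\eta.
\]

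Next I would split this difference in the integrand, following exactly the algebraic identity already used in the proof of Theorem \ref{thm:mainestimatecontrols}:
\[
\frac{\hat f+\hat u^*_{\eps,0}}{1+\lambda_0^\eps}-\frac{\hat f+\hat u^*_M}{1+P^\eps_M}
=(\hat f+\hat u^*_{\eps,0})\left(\frac{1}{1+\lambda_0^\eps}-\frac{1}{1+P^\eps_M}\right)+\frac{1}{1+P^\eps_M}\left(\hat u^*_{\eps,0}-\hat u^*_M\right).
\]
The first term is controlled by the uniform bound $\sup_{\eta\in K}|(1+\lambda_0^\eps)^{-1}-(1+P^\eps_M)^{-1}|\le C\eps^{2M}$ (which follows from $|\lambda_0^\eps(\eta)-P^\eps_M(\eta)|\le C\eps^{2M}$ uniformly on $K$ together with $\lambda_0^\eps,P^\eps_M\ge0$ on $K$ for $\eps<\eps_M$ by Lemma \ref{lem:auxiliarylemma}) times $\|\hat f+\hat u^*_{\eps,0}\|_{L^2(K;\C)}$, which is uniformly bounded by Lemma \ref{lem:assumption2}; this contributes $O(\eps^{2M})$. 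The second term is bounded by $\|(1+P^\eps_M)^{-1}\|_{L^\infty(K)}\|\hat u^*_{\eps,0}-\hat u^*_M\|_{L^2(K;\C)}$; since $1+P^\eps_M\ge1$ on $K$, the prefactor is at most $1$, and Theorem \ref{thm:mainestimatecontrols} gives $\|\hat u^*_{\eps,0}-\hat u^*_M\|_{L^2(K;\C)}\le C\eps^{2M}$. Combining the two estimates via the triangle inequality yields $\|\tilde y^*_\eps-y^*_M\|_{L^2(\R^n)}\le C\eps^{2M}$, as claimed.

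There is no serious obstacle here: the proposition is essentially a corollary of Theorem \ref{thm:mainestimatecontrols} combined with the already-established uniform spectral estimate $|\lambda_0^\eps-P^\eps_M|\le C\eps^{2M}$ on $K$. The only point requiring a little care is ensuring that all denominators are bounded below away from zero uniformly in $\eps$ and $\eta\in K$, which is exactly what Lemma \ref{lem:auxiliarylemma} guarantees for $\eps<\eps_M$ (indeed $1+\lambda_0^\eps\ge1$ and $1+P^\eps_M\ge1$ since both quantities are nonnegative on $K$). Alternatively, one could note that $\tilde y^*_{M,app}$ from \eqref{eq:ystarMappdef} satisfies $\|\tilde y^*_\eps-\tilde y^*_{M,app}\|_{L^2(\R^n)}\le C\eps^{2M}$ by Proposition \ref{prop:firstapproximationy}, and then only needs to estimate $\|\tilde y^*_{M,app}-y^*_M\|_{L^2(\R^n)}$, which by Parseval equals $\|(\hat u^*_{\eps,0}-\hat u^*_M)/(1+P^\eps_M)\|_{L^2(K;\C)}\le\|\hat u^*_{\eps,0}-\hat u^*_M\|_{L^2(K;\C)}\le C\eps^{2M}$; this two-step route uses the triangle inequality to conclude and is perhaps the cleanest presentation.
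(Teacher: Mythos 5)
Your proof is correct and is essentially the paper's own argument: the paper proves the proposition by the triangle inequality through $\tilde y^*_{M,app}$, invoking Proposition \ref{prop:firstapproximationy} for the first piece and linearity plus the a priori estimate \eqref{eq:aprioriestimateeffectiveeq} (which amounts to the bound $1+P^\eps_M\geq 1$ you use) together with Corollary \ref{cor:errorcontrols} for the second — exactly the two-step route you sketch at the end. Your main presentation merely unrolls this same decomposition at the level of Fourier multipliers, so there is no substantive difference.
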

\begin{proof}
In Proposition \ref{prop:firstapproximationy} we have already shown that the following function
\begin{equation*}
\label{eq:ystarMapp}
\tilde y^*_{M,app}(x) =\int_{K}\frac{\hat f(\eta) + \hat u^*_{\eps,0}(\eta)}{1+P_M^\eps(\eta)}e^{2\pi i\eta\cdot x}\,d\eta.
\end{equation*} 
satisfies $\|\tilde y^*_\eps- \tilde y^*_{M,app}\|_{L^2(\R^n)}\leq C\eps^{2M}$. By triangle inequality it is sufficient to prove
\begin{equation*}
\|\tilde y^*_{M,app}-  y^*_M\|_{L^2(\R^n)}\leq C\eps^{2M}.
\end{equation*}
Indeed, $\tilde y^*_{M,app}$ solves the effective equation with right hand side $f+\tilde u_\eps^*$, while $  y^*_M$ solves the same equation with right hand side $f+  u^*_M$.  
By Corollary \ref{cor:errorcontrols} one has  $\|(f+\tilde u_\eps^*)-(f+  u^*_M)\|_{L^2(\R^n)}=\|\tilde u_\eps^*-\tilde u^*_M\|_{L^2(\R^n)}\leq C\eps^{2M}$. Exploiting the linearity of 
the effective equation and the a priori estimate \eqref{eq:aprioriestimateeffectiveeq} from Proposition \ref{prop:wellposednesseffectiveeq}, one directly concludes
\begin{equation*}
\|\tilde y^*_{M,app}-\tilde y^*_M\|_{L^2(\R^n)}\leq \|\tilde u_\eps^*-  u^*_M\|_{L^2(\R^n)}\leq C\eps^{2M},
\end{equation*}
which was the claim.
\end{proof}
We conclude this section by discussing high-order approximations of the adjoint state corresponding to the optimal control problem \eqref{P}. 
By standard arguments (cf. \cite{T}), the adjoint state $p^*_\eps$ is characterized through the following adjoint equation:
\begin{equation*}
 -\nabla\cdot\left(A^\eps \nabla p^*_\eps\right) + p^*_\eps=\mu_1 \left(-\nabla\cdot\left(A^\eps\nabla (y^*_\eps-y^\eps_{d,1}\right) +(y^*_\eps-y^\eps_{d,1})\right) 
 +\mu_2(y^*_\eps-y^\eps_{d,2}).
\end{equation*}
It can therefore be written as a sum 
\begin{equation}
\label{eq:pepssum}
 p^*_\eps=\mu_1(y^*_\eps-y^\eps_{d,1})+ \mu_2 p^*_{\eps,1},\end{equation}
where $p_{\eps,1}$ solves 
\begin{equation} \label{eq:p1}
-\nabla\cdot\left(A^\eps \nabla p^*_{\eps,1}\right) + p^*_{\eps,1}= y^*_\eps-y^\eps_{d,2}.
\end{equation}
By Proposition \ref{prop:representationasadaption} and \eqref{fudwellprepared}, the right hand sides of \eqref{eq:pepssum} and \eqref{eq:p1} are nothing but
$\mu_1\mathcal{A^\eps}(\tilde y^*_\eps-y_{d,1}) + \mu_2 p_{\eps,1}^*$ and $ \mathcal{A}^\eps(\tilde y^*_\eps-y_{d,2})$.
Therefore, analogously to the approximation results for $u_\eps^*$ and $y_\eps^*$, one can derive that 
\begin{equation*}
 \|p_{\eps,1}^*-\mathcal{A^\eps}(  p^*_{M})\|_{L^2(\R^n)}\leq C\eps^{2M}
\end{equation*}
for $  p^*_{M}$ solving the $M$-th order approximate equation 
\begin{equation}
 \sum_{k=1}^M\eps^{2k-2}(-1)^{k}\mathbb{A}^*_{2k}D^{2k}  p^*_{M} +   p^*_{M}
=  y^*_M-y_{d,2}.
\end{equation}
Summing up, the following $M$-th order approximation result is 
obtained for the adjoint state:
\begin{equation*}
 \left\|p_\eps^*-\left(\mu_1\mathcal{A}^\eps(  y^*_M-y_{d,1})+\mu_2\mathcal{A}^\eps(  p^*_M)\right)\right\|_{L^2(\R^n)}\leq C\eps^{2M},
\end{equation*}
whose proof is skipped for the sake of brevity.

\section{A well-posed effective problem}
\label{sec:Wellposedepsindependent}
The effective optimal control problem from Definition \ref{def:highordereffectiveproblem} is well-posed only for $\eps$ lying below the threshold $\eps_M$ introduced in  
Lemma \ref{lem:auxiliarylemma}. In this section we provide an alternative effective problem which does not require the smallness of $\eps$. To keep the presentation simple and to 
demonstrate the main principle we restrict ourselves to second order approximations, i.e. $M=2$. However by an induction argument the analysis can 
be extended to higher-order approximations.

The central idea relies on the following formal calculation. For $M=2$ the effective PDE constraint \eqref{eq:Mthordereffectiveequation} in one space dimension reads as 
\begin{equation}
\label{eq:equationM21D}
-\mathbb{A}^*_{2}\partial^2_{x}y(x)+\eps^2\mathbb{A}^*_{4}\partial^4_x y(x)+ y(x) =f(x)+ u(x),
\end{equation}    
with $\mathbb{A}^*_{2}> 0$ and $\mathbb{A}^*_{4}\leq 0$ as shown in \cite{COV2}. We thus formally have $\mathbb{A}^*_{2}\partial^2_x  y= y-f-  u+O(\eps^2)$. By 
rewriting $\eps^2\mathbb{A}^*_{4}\partial^4_x y=\eps^2\frac{\mathbb{A}^*_{4}}{\mathbb{A}^*_{2}}\partial^2_x(\mathbb{A}^*_{2}\partial^2_x y)$ and formally replacing 
$\mathbb{A}^*_{2}\partial^2_x y$ in \eqref{eq:equationM21D} by $ y-f-  u$ we obtain the following effective equation
\begin{equation*}
-\left(\mathbb{A}^*_{2}-\eps^2\frac{\mathbb{A}^*_{4}}{\mathbb{A}^*_{2}}\right)\partial^2_{x} y(x)+ y(x) 
=\left(\text{Id}+\eps^2\frac{\mathbb{A}^*_{4}}{\mathbb{A}^*_{2}} \partial_x^2\right)(f(x)+ u(x)),
\end{equation*}  
which is well-posed, since $\mathbb{A}^*_{2}-\eps^2\frac{\mathbb{A}^*_{4}}{\mathbb{A}^*_{2}}>0$. In this simple form the above replacement procedure is known as the Boussinesq trick. 
In higher space dimension a similar, but more complicated calculation can be done. It has been shown in \cite{ABV} and \cite{DLS} that every tensor 
$\mathbb{A}^*_{4}\in\R^{n\times n\times n\times n}$ allows for the following decomposition 
\begin{equation}
\label{eq:decomposition}
\mathbb{A}^*_4D^4=-\mathbb{B}^*_2D^2(\mathbb{A}^*_2D^2) + \mathbb{B}^*_4D^4
\end{equation}
with symmetric, positive semidefinite tensors $\mathbb{B}^*_2\in\R^{n\times n}$ and $\mathbb{B}^*_4\in\R^{n\times n\times n\times n}$. 
Proceeding exactly as in the one dimensional case we end up with the following effective PDE constraint
\begin{equation}
\label{eq:effectivePDEconstraintwellposed}
-(\mathbb{A}^*_2+ \eps^2\mathbb{B}^*_2)D^2 y(x) + \eps^2\mathbb{B}^*_4D^4 y(x) +  y(x)
=\left(\text{Id}-\eps^2\mathbb{B}^*_2D^2\right)(f(x) +  u(x)),
\end{equation}
which is well-posed independently of the choice of $\eps$. In the recent publication \cite{AP3} the Boussinesq trick has been generalized to arbitrary order $M>2$ 
using an induction argument.  

\begin{proposition}[Well posedness of the PDE constraint]
\label{prop:Apriorisolutionnew}
Let $u\in L^2(\R^n)$ be a given control with Fourier transform $\hat u$ and $\supp(\hat u)\subset K$. 
\begin{itemize}
\item[i)] There exists a unique weak solution $y\in H^1(\R^n)$ to Equation \eqref{eq:effectivePDEconstraintwellposed}.
\item[ii)] The solution $y$ satisfies the a priori estimate 
\begin{equation}
\label{eq:apriorinewwffectivePDE}
\| y\|^2_{L^2(\R^n)} + 2\lambda\|\nabla y\|^2_{L^2(\R^n)}\leq C^2\|f+u\|^2_{L^2(\R^n)}
\end{equation}
with a constant $C$ depending only on $K$. Here $\lambda>0$ is the ellipticity constant of $\mathbb{A}^*_2$, i.e. 
such that $\mathbb{A}^*_2\cdot\eta^{\otimes 2}\geq \lambda|\eta|^2$ for every $\eta\in\R^n$. 
\end{itemize}  
\end{proposition}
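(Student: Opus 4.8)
The plan is to follow the strategy of Proposition~\ref{prop:wellposednesseffectiveeq} and solve \eqref{eq:effectivePDEconstraintwellposed} explicitly in Fourier space; the new feature is that, after the Boussinesq manipulation, the symbol of the operator on the left-hand side is \emph{globally} elliptic, so that no smallness threshold on $\eps$ is needed. Applying the Fourier transform and using that $D^2$ and $D^4$ act as the multipliers $\eta\mapsto-(2\pi)^2(\,\cdot\,)\cdot\eta^{\otimes 2}$ and $\eta\mapsto(2\pi)^4(\,\cdot\,)\cdot\eta^{\otimes 4}$, equation \eqref{eq:effectivePDEconstraintwellposed} is equivalent to $m^\eps(\eta)\,\hat y(\eta)=r^\eps(\eta)(\hat f(\eta)+\hat u(\eta))$ with
\[
 m^\eps(\eta):=1+(2\pi)^2(\mathbb{A}^*_2+\eps^2\mathbb{B}^*_2)\cdot\eta^{\otimes 2}+\eps^2(2\pi)^4\,\mathbb{B}^*_4\cdot\eta^{\otimes 4},\qquad
 r^\eps(\eta):=1+\eps^2(2\pi)^2\,\mathbb{B}^*_2\cdot\eta^{\otimes 2}.
\]
Because $\mathbb{A}^*_2\cdot\eta^{\otimes 2}\ge\lambda|\eta|^2$ and, by the decomposition \eqref{eq:decomposition}, the tensors $\mathbb{B}^*_2$ and $\mathbb{B}^*_4$ are symmetric and positive semidefinite, we have $m^\eps(\eta)\ge 1+(2\pi)^2\lambda|\eta|^2>0$ and $r^\eps(\eta)\ge 1>0$ for every $\eta\in\R^n$ and every $\eps>0$; moreover $m^\eps(\eta)-r^\eps(\eta)=(2\pi)^2\mathbb{A}^*_2\cdot\eta^{\otimes 2}+\eps^2(2\pi)^4\,\mathbb{B}^*_4\cdot\eta^{\otimes 4}\ge(2\pi)^2\lambda|\eta|^2\ge 0$.

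Any weak solution $y\in H^1(\R^n)\subset\mathcal{S}'(\R^n)$ must satisfy the above multiplier identity, hence, since $m^\eps$ is a strictly positive polynomial, $\hat y=(r^\eps/m^\eps)(\hat f+\hat u)$ a.e.; this proves uniqueness. Conversely, I would define $y$ by this formula: as $\supp(\hat f+\hat u)\subset K$, the function $\hat y$ is supported in $K$, so $y\in H^k(\R^n)$ for all $k\in\N$ (in particular $y\in H^1$), and a direct calculation in Fourier space shows that $y$ solves \eqref{eq:effectivePDEconstraintwellposed}; this gives existence.

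For the a priori estimate \eqref{eq:apriorinewwffectivePDE} I would multiply $m^\eps\hat y=r^\eps(\hat f+\hat u)$ by $\overline{\hat y}$, integrate over $\R^n$, and split $m^\eps=r^\eps+(m^\eps-r^\eps)$. Bounding $m^\eps-r^\eps\ge(2\pi)^2\lambda|\eta|^2$ from below on the left and applying the weighted Cauchy--Schwarz inequality together with Young's inequality on the right yields
\[
 \int_{\R^n}r^\eps|\hat y|^2\,d\eta+2(2\pi)^2\lambda\int_{\R^n}|\eta|^2|\hat y|^2\,d\eta\le\int_{\R^n}r^\eps|\hat f+\hat u|^2\,d\eta.
\]
Since $r^\eps\ge1$, while on the support $K$ of $\hat f+\hat u$ one has $r^\eps(\eta)\le 1+\eps^2(2\pi)^2\|\mathbb{B}^*_2\|\sup_{\xi\in K}|\xi|^2=:C^2$, Parseval's identity (together with $\|\nabla y\|^2_{L^2(\R^n)}=(2\pi)^2\int_{\R^n}|\eta|^2|\hat y|^2\,d\eta$) converts this into $\|y\|^2_{L^2(\R^n)}+2\lambda\|\nabla y\|^2_{L^2(\R^n)}\le C^2\|f+u\|^2_{L^2(\R^n)}$. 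The constant $C$ depends only on $K$: the standing assumption $K\subset Z/\eps$ forces $\eps$ to be bounded in terms of $K$, so the factor $\eps^2$ in the bound for $r^\eps$ is harmless.

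I expect the only genuinely conceptual point to be the global positivity of $m^\eps$ with no threshold on $\eps$, which is exactly the improvement over Proposition~\ref{prop:wellposednesseffectiveeq} and which is simply inherited from the Boussinesq decomposition \eqref{eq:decomposition}: the wrongly signed term $\eps^2\mathbb{A}^*_4D^4$ is traded for the nonnegative term $\eps^2\mathbb{B}^*_4D^4$ together with an enhancement of the second-order coefficient from $\mathbb{A}^*_2$ to $\mathbb{A}^*_2+\eps^2\mathbb{B}^*_2$; the structural inequality $r^\eps\le m^\eps$ (the modified right-hand side operator $\mathrm{Id}-\eps^2\mathbb{B}^*_2D^2$ is dominated by the strengthened left-hand side ellipticity) then makes the solution operator contractive on $L^2$ uniformly in $\eps$. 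Everything else — the Fourier-space identity and the weighted Cauchy--Schwarz/Young estimate — is routine.
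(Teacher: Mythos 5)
Your proposal is correct and follows essentially the same route as the paper: both pass to the Fourier multiplier identity $(1+R^\eps_2)\hat y=(1+Q^\eps_2)(\hat f+\hat u)$, obtain existence/uniqueness from the explicit formula $\hat y=\frac{1+Q^\eps_2}{1+R^\eps_2}(\hat f+\hat u)$ using the global positivity inherited from the positive semidefiniteness of $\mathbb{B}^*_2,\mathbb{B}^*_4$ and the ellipticity of $\mathbb{A}^*_2$, and derive the a priori bound by testing with $\overline{\hat y}$ and Cauchy--Schwarz/Young. Your version is marginally more careful on two cosmetic points (the $(2\pi)^2$ bookkeeping between $\int|\eta|^2|\hat y|^2$ and $\|\nabla y\|^2$, and the justification that $C$ depends only on $K$ because $K\subset Z/\eps$ bounds $\eps$), but these do not constitute a different argument.
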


\begin{proof}
Consider
\begin{align*}
Q^\eps_2(\eta)&:=\eps^2(2\pi)^2\mathbb{B}^*_2\cdot\eta^{\otimes 2},\\
R^\eps_2(\eta)&:=(2\pi)^2(\mathbb{A}^*_2+ \eps^2\mathbb{B}^*_2)\cdot\eta^{\otimes 2} 
+ \eps^2(2\pi)^4\mathbb{B}^*_4\cdot\eta^{\otimes 4}.
\end{align*}
By a direct calculation in Fourier space one finds that 
\begin{equation}
\label{eq:formulasolutioneffectivePDEnew}
y(x):=\int_K \frac{1+Q^\eps_2(\eta)}{1+R^\eps_2(\eta)}(\hat f(\eta) + \hat u(\eta))e^{2\pi i\eta\cdot x}\,d\eta
\end{equation}
is a solution to \eqref{eq:effectivePDEconstraintwellposed}. Moreover, every solution $\tilde y$ with Fourier transform $\hat y$ satisfies
\begin{equation*}
(1+R^\eps_2(\eta))\hat y(\eta)=(1+Q^\eps_2(\eta))(\hat f(\eta) + \hat u(\eta)).
\end{equation*}
Multiplying the above relation with $\hat y$, integrating over $\R^n$, taking into account $\hat f(\eta)=\hat u(\eta)=\hat y(\eta)=0$ for $\eta\notin K$, and exploiting that $\mathbb{B}^*_2$ and $\mathbb{B^*}_4$ 
are positive semidefinite one finds
\begin{align*}
&\int_{\R^n}\left(1+\lambda|\eta|^2\right)|\hat y(\eta)|^2\,d\eta\leq\int_K(1+R^\eps_2(\eta))|\hat y(\eta)|^2\,d\eta\\
=&\int_K(1+Q^\eps_2(\eta))(\hat f(\eta) + \hat u(\eta))\bar{\hat y}(\eta)\,d\eta
\leq C\|\hat f+\hat u\|_{L^2(K;\C)}\|\hat y\|_{L^2(K;\C)}\\
\leq&\frac12\left(C^2\|\hat f+\hat u\|^2_{L^2(K;\C)} + \|\hat y\|^2_{L^2(K;\C)}\right). 
\end{align*}
The desired estimate \eqref{eq:apriorinewwffectivePDE} follows directly by Parseval's identity.  
\end{proof}

It is interesting to note how the prefactor $\frac{1+Q^\eps_2}{1+R^\eps_2}$ in \eqref{eq:formulasolutioneffectivePDEnew} is related to the former prefactors $\frac{1}{1+P^\eps_2}$ and $\frac{1}{1+\lambda_0^\eps}$, 
see \eqref{eq:representationFouriereffectivesolution} and \eqref{eq:energycompactsupport}. Actually
\begin{align*}
&(1+Q^\eps_2(\eta))(1+P_2^\eps(\eta))-(1+R^\eps_2(\eta))\\
=&
(1+\eps^2(2\pi)^2\mathbb{B}^*_2\cdot\eta^{\otimes 2})(1+(2\pi)^2\mathbb{A}^*_2\cdot\eta^{\otimes 2} + \eps^2(2\pi)^4\mathbb{A}^*_4\cdot\eta^{\otimes 4})\\
-&\left(1+(2\pi)^2(\mathbb{A}^*_2+ \eps^2\mathbb{B}^*_2)\cdot\eta^{\otimes 2} 
+ \eps^2(2\pi)^4\mathbb{B}^*_4\cdot\eta^{\otimes 4}\right) + O(\eps^4)\\
=&\,\eps^2(2\pi)^4\left(\mathbb{B}^*_2\cdot\eta^{\otimes 2}\mathbb{A}^*_2\cdot\eta^{\otimes 2} + \mathbb{A}^*_4\cdot\eta^{\otimes 4}-\mathbb{B}^*_4\cdot\eta^{\otimes 4}\right) + O(\eps^4)
\stackrel{\eqref{eq:decomposition}}{=}O(\eps^4),
\end{align*}
 where the error is of order $\eps^4$ uniformly in $\eta\in K$. In particular exploiting the fact that $R^\eps_2$ and $\lambda_0^\eps$ are nonnegative, we conclude that
\begin{align}
\begin{split}
\label{eq:neqfactorclosetoold}
\left|\frac{1+Q^\eps_2(\eta)}{1+R^\eps_2(\eta)}-\frac{1}{1+\lambda_0^\eps(\eta)}\right|
=&\left|\frac{(1+Q^\eps_2(\eta))(1+\lambda_0^\eps(\eta))-(1+R^\eps_2(\eta)}{(1+R^\eps_2(\eta))(1+\lambda_0^\eps(\eta))}\right|\\
=&\left|\frac{(1+Q^\eps_2(\eta))(1+P^\eps_2(\eta) + O(\eps^4))-(1+R^\eps_2(\eta))}{(1+R^\eps_2(\eta))(1+\lambda_0^\eps(\eta))}\right|\\
\leq&|(1+Q^\eps_2(\eta))(1+P^\eps_2(\eta))-(1+R^\eps_2(\eta))| + O(\eps^4)\\
=&O(\eps^4)
\end{split}
\end{align}
uniformly in $\eta\in K$. 
We emphasize once more that the advantage of working with $\frac{1+Q^\eps_2}{1+R^\eps_2}$ instead of $\frac{1}{1+P^\eps_2}$ is that $1+R^\eps_2\geq 1>0$ independently of $\eps$, 
while this is not the case for $1+P^\eps_2$. In view of \eqref{eq:neqfactorclosetoold}, it is reasonable to introduce the following effective cost functional in Fourier form to obtain a second order 
approximation of the original cost functional $\hat J^\eps$ from \eqref{eq:energycompactsupport}. 

\begin{definition}
[Well posed effective optimization problem in Fourier form]
\label{def:alternativeeffectivemodel}
Let $\hat U_{ad}$ be the effective admissible set from \eqref{eq:twoadmsetsequality} and let $\mathbb{B}^*_2, \mathbb{B}^*_4$ be as in \eqref{eq:decomposition}. 
Minimize the cost functional 
\begin{align}
\begin{split}
\hat I_2^*(\hat u)
=&\frac{\mu_1}{2}\int_{K}\left|\frac{1+Q^\eps_2(\eta)}{1+R^\eps_2(\eta)}(\hat f(\eta) + \hat u(\eta))-\hat y_{d,1}(\eta)\right|^2\frac{1+R^\eps_2(\eta)}{1+Q^\eps_2(\eta)}\,d\eta\\
 &+ \frac{\mu_2}{2}\int_{K}\left|\frac{1+Q^\eps_2(\eta)}{1+R^\eps_2(\eta)}(\hat f(\eta) + \hat u(\eta))-\hat y_{d,2}(\eta)\right|^2\,d\eta
+\frac{\kappa}{2}\int_{K}|\hat u(\eta)|^2\,d\eta
\end{split}
\end{align}
in the set $\hat U_{ad}$. 
\end{definition}  
Let us comment on the above optimization problem. The problem is well-posed independently of $\eps$ with a unique minimizer $\hat u^{*}_{2,\hat I}\in \hat U_{ad}$.  
Analogously to Theorem \ref{thm:mainestimatecontrols} we prove in Theorem \ref{thm:approxoptimacontrolnew} below that 
\begin{equation*}
\|\hat u^*_{\eps,0}-\hat u^*_{2,\hat I}\|_{L^2(K;\C)}\leq C\eps^{4}.
\end{equation*} 
Exactly as in Proposition \ref{prop:approximationPDEconstraint} we then conclude, using the a priori estimate from Proposition \ref{prop:Apriorisolutionnew}, that 
\begin{equation}
 y^*_{2,\hat I}(x):=\int_K\frac{1+Q^\eps_2(\eta)}{1+R^\eps_2(\eta)}(\hat f(\eta) + \hat u^{*}_{2,\hat I}(\eta))e^{2\pi i\eta\cdot x}\,d\eta
\end{equation} 
satisfies the error estimate 
\begin{equation}
\| y^*_\eps-y^*_{2,\hat I}\|_{L^2(\R^n)}\leq C\eps^4
\end{equation}
for some $\eps$-independent constant $C$. 
The optimization problem from Definition \ref{def:alternativeeffectivemodel} is thus an alternative to approximate the original problem up to errors of order $\eps^4$. 
Its central advantage lies in the fact that it is well-posed independently of $\eps$. 

\begin{remark}[higher-order well-posed approximations] $ $
\begin{itemize}
\item[i)]
 As shown in \cite{AP3} the Boussinesq trick can be generalized to arbitrary order $M\in\N$. This fact can be used to derive a higher-order effective optimal control problem that is well-posed 
 independently of $\eps$. 
 \item[ii)] In \cite{BG} a different regualarization approach of the effective equation \eqref{eq:Mthordereffectiveequation} is proposed which relies on adding 
 a high-order spatial derivative. \cite{AP3} provides a brief comparision of both regularization methods.    
 \end{itemize}
\end{remark}
We now prove the approximation property of the alternative effective optimization problem from Definition \ref{def:alternativeeffectivemodel}.
\begin{theorem}[Approximation of the optimal control]
\label{thm:approxoptimacontrolnew}
Let $\hat u^{*}_{2,\hat I}\in \hat U_{ad}$ be the unique minimizer of $\hat I_2^*$ and let $\hat u^*_{\eps,0}\in \hat U_{ad}$ be the unique minimizer of $\hat J^\eps$. 
Then 
\begin{equation}
\label{eq_estimatecontrolapproximation}
\|\hat u^*_{\eps,0}-\hat u^{*}_{2,\hat I}\|_{L^2(K;\C)}\leq C\eps^{4}.
\end{equation}  
for an $\eps$-independent constant $C$.    
\end{theorem}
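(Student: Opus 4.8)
The plan is to repeat, essentially verbatim, the cross-testing argument from the proof of Theorem~\ref{thm:mainestimatecontrols}, with the multiplier $\tfrac{1}{1+P^\eps_M}$ there replaced by $\rho^\eps(\eta):=\tfrac{1+Q^\eps_2(\eta)}{1+R^\eps_2(\eta)}$ and the Taylor remainder bound on $\lambda^\eps_0$ replaced by the uniform estimate \eqref{eq:neqfactorclosetoold}. First I would note that, exactly as in Remark~\ref{rem:convexcontinuous}, $\hat I^*_2$ is a strictly convex, continuous quadratic functional on $L^2(K;\C)$: its quadratic part in $\hat u$ equals $\tfrac{\mu_1}{2}\int_K\rho^\eps|\hat u|^2\,d\eta+\tfrac{\mu_2}{2}\int_K(\rho^\eps)^2|\hat u|^2\,d\eta+\tfrac{\kappa}{2}\int_K|\hat u|^2\,d\eta\ge\tfrac{\kappa}{2}\|\hat u\|^2_{L^2(K;\C)}$ because $\rho^\eps>0$. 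Hence on the nonempty, closed, convex set $\hat U_{ad}$ (Proposition~\ref{prop:closedhatU}) it has a unique minimizer, characterized by a variational inequality with
\begin{equation*}
D\hat I^*_2(\hat u)=\mu_1\bigl(\rho^\eps(\hat f+\hat u)-\hat y_{d,1}\bigr)+\mu_2\rho^\eps\bigl(\rho^\eps(\hat f+\hat u)-\hat y_{d,2}\bigr)+\kappa\hat u,
\end{equation*}
while $\hat u^*_{\eps,0}$ satisfies the analogous variational inequality on the \emph{same} set $\hat U_{ad}$, with $D\hat J^\eps$ as in the proof of Theorem~\ref{thm:mainestimatecontrols}.

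Next I would test the variational inequality for $\hat u^{*}_{2,\hat I}$ with $\hat u^*_{\eps,0}$ and the one for $\hat u^*_{\eps,0}$ with $\hat u^{*}_{2,\hat I}$, and add the two. Writing $w:=\hat u^*_{\eps,0}-\hat u^{*}_{2,\hat I}$, $g_\eps:=\hat f+\hat u^*_{\eps,0}$, $g_I:=\hat f+\hat u^{*}_{2,\hat I}$, and $\beta_\eps:=(1+\lambda^\eps_0)^{-1}$, the $\kappa$-terms combine to $\kappa\|w\|^2_{L^2(K;\C)}$ and, exactly as in the passage leading to \eqref{eq:estimateuMustar}, one arrives at
\begin{equation*}
\kappa\|w\|^2_{L^2(K;\C)}\le\mu_1\int_K\text{Re}\bigl[(\rho^\eps g_I-\beta_\eps g_\eps)\,\bar w\bigr]\,d\eta+\mu_2\int_K\text{Re}\bigl[\bigl((\rho^\eps)^2 g_I-\beta_\eps^2 g_\eps-(\rho^\eps-\beta_\eps)\hat y_{d,2}\bigr)\bar w\bigr]\,d\eta.
\end{equation*}
In each bracket I would peel off the diagonal part via $\rho^\eps g_I-\beta_\eps g_\eps=-\rho^\eps w+(\rho^\eps-\beta_\eps)g_\eps$ and $(\rho^\eps)^2 g_I-\beta_\eps^2 g_\eps=-(\rho^\eps)^2 w+\bigl((\rho^\eps)^2-\beta_\eps^2\bigr)g_\eps$; since $\rho^\eps>0$, the terms $-\rho^\eps|w|^2$ and $-(\rho^\eps)^2|w|^2$ are nonpositive and can be discarded. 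What remains is controlled by Cauchy--Schwarz together with $\|\rho^\eps-\beta_\eps\|_{L^\infty(K)}=O(\eps^4)$ from \eqref{eq:neqfactorclosetoold} and $\|(\rho^\eps)^2-\beta_\eps^2\|_{L^\infty(K)}\le\|\rho^\eps-\beta_\eps\|_{L^\infty(K)}\,\|\rho^\eps+\beta_\eps\|_{L^\infty(K)}=O(\eps^4)$, which gives
\begin{equation*}
\kappa\|w\|^2_{L^2(K;\C)}\le C\eps^4\bigl(\|g_\eps\|_{L^2(K;\C)}+\|\hat y_{d,2}\|_{L^2(K;\C)}\bigr)\|w\|_{L^2(K;\C)}.
\end{equation*}
Dividing by $\|w\|_{L^2(K;\C)}$ and using $\|g_\eps\|_{L^2(K;\C)}\le\|f\|_{L^2(\R^n)}+\|u^*_\eps\|_{L^2(\R^n)}\le C$ by Parseval's identity \eqref{eq:Plancherel} and Lemma~\ref{lem:assumption2} yields the claimed bound $\|w\|_{L^2(K;\C)}\le C\eps^4$.

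The one step where the argument genuinely differs from Theorem~\ref{thm:mainestimatecontrols}, and which I expect to be the only delicate point, is the uniform two-sided bound $0<c\le\rho^\eps(\eta)\le C$ and $0<c\le\beta_\eps(\eta)\le1$ on $K$ with $c,C$ independent of $\eps$; this is what makes it legitimate both to discard the $-\rho^\eps|w|^2$ terms and to estimate $(\rho^\eps)^2-\beta_\eps^2$ by the product $\|\rho^\eps-\beta_\eps\|_{L^\infty(K)}\|\rho^\eps+\beta_\eps\|_{L^\infty(K)}$ with an $\eps$-independent constant. I would get it from the facts that $Q^\eps_2,R^\eps_2,\lambda^\eps_0\ge0$ on $K$, that $1+R^\eps_2\ge1$, and that $Q^\eps_2$, $R^\eps_2$ and $\lambda^\eps_0$ are all $O(1)$ on the fixed compact set $K$ for $\eps\le1$; everything else is a mechanical transcription of the scheme already used for Theorem~\ref{thm:mainestimatecontrols}, with \eqref{eq:neqfactorclosetoold} supplying the sole quantitative input.
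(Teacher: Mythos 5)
Your proposal is correct and follows essentially the same route as the paper's proof: cross-testing the two variational inequalities over the common admissible set $\hat U_{ad}$, discarding the nonpositive diagonal terms $-\rho^\eps|w|^2$ and $-(\rho^\eps)^2|w|^2$, and invoking the uniform $O(\eps^4)$ bound \eqref{eq:neqfactorclosetoold} together with Parseval's identity and Lemma \ref{lem:assumption2}. The paper simply states ``we can argue exactly as in the proof of Theorem \ref{thm:mainestimatecontrols}''; your explicit handling of $(\rho^\eps)^2-\beta_\eps^2$ via the uniform two-sided bounds on $\rho^\eps$ and $\beta_\eps$ is a correct filling-in of that step, not a different argument.
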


\begin{proof}
The proof follows the lines of Theorem \ref{thm:mainestimatecontrols}. With
\begin{equation*}
D\hat I_2^*(\hat u):=\mu_1\left(\frac{1+Q^\eps_2}{1+R^\eps_2}(\hat f + \hat u)-\hat y_{d,1}\right)
+\mu_2\left(\frac{1+Q^\eps_2}{1+R^\eps_2}(\hat f + \hat u)-\hat y_{d,2}\right)\frac{1+Q^\eps_2}{1+R^\eps_2} + \kappa\hat u,
\end{equation*}
taking $\hat u^*_{\eps,0}$ as a test function in the variational inequality corresponding to $\hat I_2^*$ and $\hat u^{*}_{2,\hat I}$ as a test function in \eqref{eq:varinequality1}, one obtains
\begin{align*}
\int_K \text{Re}\bigg[\bigg(&\mu_1\left(\tfrac{1+Q^\eps_2}{1+R^\eps_2}(\hat f + \hat u^{*}_{2,\hat I})-\hat y_{d,1}\right)
+\mu_2\left(\tfrac{1+Q^\eps_2}{1+R^\eps_2}(\hat f + \hat u^{*}_{2,\hat I})-\hat y_{d,2}\right)\tfrac{1+Q^\eps_2}{1+R^\eps_2}+\kappa\hat u^{*}_{2,\hat I}\bigg)\\
&\times(\overline{\hat u^*_{\eps,0}}-\overline{\hat u^{*}_{2,\hat I}})\bigg](\eta)\,d\eta\geq 0
\end{align*}
and
\begin{equation*}
\int_K\text{Re}\left[\left(\mu_1\left(\tfrac{\hat f + \hat u^*_{\eps,0}}{1+\lambda_0^\eps}-\hat y_{d,1}\right)
+\mu_2\left(\tfrac{\hat f + \hat u^*_{\eps,0}}{1+\lambda_0^\eps}-\hat y_{d,2}\right)(1+\lambda_0^\eps)^{-1} 
+ \kappa\hat u^*_{\eps,0}\right)(\overline{\hat u^{*}_{2,\hat I}}-\overline{\hat u^*_{\eps,0}})\right](\eta)\,d\eta\geq 0.
\end{equation*}
Summing up the two inequalities one concludes
\begin{align*}
 &\kappa\int_K\left|\hat u^*_{\eps,0}(\eta)-\hat u^{*}_{2,\hat I}(\eta)\right|^2(\eta)\,d\eta \\
 \leq \, &\mu_1\int_K\text{Re}
\left[\left(\tfrac{1+Q^\eps_2}{1+R^\eps_2}(\hat f + \hat u^{*}_{2,\hat I})- 
\tfrac{1}{1+\lambda_0^\eps}(\hat f + \hat u^*_{\eps,0})\right)\left(\overline{\hat u^*_{\eps,0}}-\overline{\hat u^{*}_{2,\hat I}}\right)\right](\eta)\,d\eta\\
+&\mu_2\int_K\text{Re}
\left[\left(\left(\tfrac{1+Q^\eps_2}{1+R^\eps_2}(\hat f + \hat u^{*}_{2,\hat I})-\hat y_{d,2}\right)\tfrac{1+Q^\eps_2}{1+R^\eps_2}- 
\left(\tfrac{\hat f + \hat u^*_{\eps,0}}{1+\lambda_0^\eps}-\hat y_{d,2}\right)(1+\lambda_0^\eps)^{-1} \right)\left(\hat u^*_{\eps,0}-\hat u^{*}_{2,\hat I}\right)\right](\eta)\,d\eta.
\end{align*}
Since by \eqref{eq:neqfactorclosetoold} the difference between $\frac{1+Q^\eps_2}{1+R^\eps_2}$ and $\frac{1}{1+\lambda_0^\eps}$ is of order $O(\eps^4)$ uniformly in $\eta\in K$, we 
can argue exactly as in the proof of Theorem  \ref{thm:mainestimatecontrols} to find 
\begin{equation*}
\kappa\|\hat u^*_{\eps,0}(\eta)-\hat u^{*}_{2,\hat I}\|^2_{L^2(K;\C)}\leq C\eps^4\|\hat u^*_{\eps,0}(\eta)-\hat u^{*}_{2,\hat I}\|_{L^2(K;\C)},
\end{equation*}
which is the claimed result. 
\end{proof}
We conclude this section by rewriting the minimization problem from Definition \ref{def:alternativeeffectivemodel} as an optimal control 
problem with PDE constraint \eqref{eq:effectivePDEconstraintwellposed}. Unfortunately, the factor $\frac{1+R^\eps_2(\eta)}{1+Q^\eps_2(\eta)}$ in the first part of 
the cost functional $\hat I^*_2$ does 
not allow for a simple representation of the cost functional in the corresponding optimal control problem. 

\begin{proposition}[Reformulation as an optimal control problem]
\label{prop:reformulationnewproblem}
The minimization problem from Definition \ref{def:alternativeeffectivemodel} is equivalent to minimizing the cost functional
\begin{align*}
I^*_2( u, y, w):=&
\frac{\mu_1}{2}\int_{\R^n}
\Big(\left\langle D( w-w_{d,1})(x), D( w-w_{d,1})(x)
\right\rangle_{\mathbb{A}^*_{2} + 2\eps^2\mathbb{B}^*_2}\\
&\quad+\eps^2\left\langle D^2( w-w_{d,1})(x), D^2( w-w_{d,1})(x)
\right\rangle_{\mathbb{B}^*_4 + \mathbb{A}^*_2\mathbb{B}^*_2 + \eps^2\mathbb{B}^*_2\mathbb{B}^*_2}\\
&\quad+\eps^4\left\langle D^3( w-w_{d,1})(x), D^3( w-w_{d,1})(x)
\right\rangle_{\mathbb{B}^*_4\mathbb{B}^*_2}
 + | w(x)-w_{d,1}(x)|^2\Big)\,dx\\
+&\frac{\mu_2}{2}\int_{\R^n}| y(x)-y_{d,2}(x)|^2\,dx + \frac{\kappa}{2}\int_{\R^n}| u(x)|^2\,dx
\end{align*} 
in the set $U^*_{ad}$ under the PDE constraint \eqref{eq:effectivePDEconstraintwellposed}. The functions $ w, w_{d,2}$ are determined through
\begin{equation*}
-\eps^2\mathbb{B}^*_2D^2 w(x) +  w(x)= y(x)\quad\text{and}\quad -\eps^2\mathbb{B}^*_2D^2 w_{d,2}(x) + w_{d,2}(x)=y_{d,2}(x),
\end{equation*}
where the composition of tensors is defined as 
\begin{align*}
\left(\mathbb{A}^*_2\mathbb{B}^*_2\right)_{i_1,i_2,i_3,i_4}&:=(\mathbb{A}^*_2)_{i_1,i_3}(\mathbb{B}^*_2)_{i_2,i_4}\quad\text{and}\quad
\left(\mathbb{B}^*_2\mathbb{B}^*_2\right)_{i_1,i_2,i_3,i_4}:=(\mathbb{B}^*_2)_{i_1,i_3}(\mathbb{B}^*_2)_{i_2,i_4},\\
\left(\mathbb{B}^*_4\mathbb{B}^*_2\right)_{i_1,\dots,i_6}&:=(\mathbb{B}^*_4)_{i_1,i_2,i_4,i_5}(\mathbb{B}^*_2)_{i_3,i_6}.
\end{align*}
In particular $\hat u^{*}_{2,\hat I}$ is the Fourier transform of the unique optimal control $ u^{*}_{2,I}$. 
\end{proposition}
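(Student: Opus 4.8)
The plan is to reduce the PDE-constrained problem to a minimization over the control alone by passing to Fourier variables, and then to check that the resulting control-reduced functional is identical to $\hat I^*_2$; the equivalence of the two problems then follows from the identification of $U^*_{ad}$ with $\hat U_{ad}$ already established in Proposition~\ref{prop:reformulationoptimalcontrolBlochFourier}.

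First I would observe that, since $\mathbb{B}^*_2$ is positive semidefinite, one has $1+Q^\eps_2(\eta)\geq 1>0$ for all $\eta$, so the auxiliary operator $-\eps^2\mathbb{B}^*_2D^2+\mathrm{Id}$ is boundedly invertible on the closed subspace of $L^2(\R^n)$ consisting of functions with Fourier support in $K$. Consequently, for every control $u\in U^*_{ad}$ (so $\supp(\hat u)\subset K$) the triple $(u,y,w)$ occurring in the constrained problem is uniquely determined: $y$ is given by Proposition~\ref{prop:Apriorisolutionnew}, hence $\hat y=\frac{1+Q^\eps_2}{1+R^\eps_2}(\hat f+\hat u)$ with $\supp(\hat y)\subset K$, and then $\hat w=\frac{\hat y}{1+Q^\eps_2}=\frac{\hat f+\hat u}{1+R^\eps_2}$. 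Applying the same operator inverse to the data, the shifted desired profile appearing in the first part of $I^*_2$ has Fourier transform $\frac{\hat y_{d,1}}{1+Q^\eps_2}$ (and, for the datum in the $\mu_2$-part, $\frac{\hat y_{d,2}}{1+Q^\eps_2}$), again supported in $K$. In particular all functions appearing in $I^*_2$ belong to every $H^k(\R^n)$ and Parseval's identity applies term by term.

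The key computation concerns the quadratic part of the $\mu_1$-term. Using that, for a symmetric $2k$-tensor $\mathbb{T}$ and a real-valued function $v$, one has $\int_{\R^n}\langle D^kv,D^kv\rangle_{\mathbb{T}}\,dx=(2\pi)^{2k}\int_{\R^n}(\mathbb{T}\cdot\eta^{\otimes 2k})|\hat v(\eta)|^2\,d\eta$, together with the defining property of the tensor compositions, namely $(\mathbb{A}^*_2\mathbb{B}^*_2)\cdot\eta^{\otimes4}=(\mathbb{A}^*_2\cdot\eta^{\otimes2})(\mathbb{B}^*_2\cdot\eta^{\otimes2})$ and analogously for $\mathbb{B}^*_2\mathbb{B}^*_2$ and $\mathbb{B}^*_4\mathbb{B}^*_2$, I would verify the factorization
\begin{align*}
&1+(2\pi)^2(\mathbb{A}^*_2+2\eps^2\mathbb{B}^*_2)\cdot\eta^{\otimes2}+\eps^2(2\pi)^4(\mathbb{B}^*_4+\mathbb{A}^*_2\mathbb{B}^*_2+\eps^2\mathbb{B}^*_2\mathbb{B}^*_2)\cdot\eta^{\otimes4}+\eps^4(2\pi)^6(\mathbb{B}^*_4\mathbb{B}^*_2)\cdot\eta^{\otimes6}\\
&\qquad=(1+Q^\eps_2(\eta))(1+R^\eps_2(\eta)),
\end{align*}
which reduces to expanding $(1+b_2)(1+a+b_2+b_4)$ with $a=(2\pi)^2\mathbb{A}^*_2\cdot\eta^{\otimes2}$, $b_2=Q^\eps_2(\eta)$ and $b_4=\eps^2(2\pi)^4\mathbb{B}^*_4\cdot\eta^{\otimes4}$, recalling $R^\eps_2=a+b_2+b_4$. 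Since $\widehat{w-w_{d,1}}=(\hat y-\hat y_{d,1})/(1+Q^\eps_2)$, this turns the $\mu_1$-part of $I^*_2$ into $\frac{\mu_1}{2}\int_K\bigl|\frac{1+Q^\eps_2}{1+R^\eps_2}(\hat f+\hat u)-\hat y_{d,1}\bigr|^2\frac{1+R^\eps_2}{1+Q^\eps_2}\,d\eta$, which is exactly the $\mu_1$-part of $\hat I^*_2$. The $\mu_2$-part of $I^*_2$ transforms by Parseval into $\frac{\mu_2}{2}\int_K|\hat y-\hat y_{d,2}|^2\,d\eta$ with $\hat y=\frac{1+Q^\eps_2}{1+R^\eps_2}(\hat f+\hat u)$, and the $\kappa$-part into $\frac{\kappa}{2}\int_K|\hat u|^2\,d\eta$, so that $I^*_2(u,y,w)=\hat I^*_2(\hat u)$ for every admissible triple.

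Finally, by Proposition~\ref{prop:reformulationoptimalcontrolBlochFourier} the Fourier transform is a bijection between $U^*_{ad}$ and $\hat U_{ad}$, hence minimizing $I^*_2$ over $U^*_{ad}$ subject to the constraints is equivalent to minimizing $\hat I^*_2$ over $\hat U_{ad}$; in particular the unique minimizer $\hat u^{*}_{2,\hat I}$ of $\hat I^*_2$ is the Fourier transform of the unique optimal control $u^{*}_{2,I}$. The one step that is not entirely routine is the factorization identity above: one must track carefully the powers of $\eps$ and of $2\pi$ carried by each composed tensor in order to see that the symbol of the quadratic form collapses precisely to $(1+Q^\eps_2)(1+R^\eps_2)$ — everything else is standard Fourier calculus together with the invertibility of the auxiliary operator.
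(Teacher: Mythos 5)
Your proposal is correct and follows essentially the same route as the paper's proof: reduce everything to Fourier space via Parseval, identify $\hat w=\hat y/(1+Q^\eps_2)$, and verify the factorization $(1+Q^\eps_2)(1+R^\eps_2)=1+(2\pi)^2(\mathbb{A}^*_2+2\eps^2\mathbb{B}^*_2)\cdot\eta^{\otimes2}+\eps^2(2\pi)^4(\mathbb{B}^*_4+\mathbb{A}^*_2\mathbb{B}^*_2+\eps^2\mathbb{B}^*_2\mathbb{B}^*_2)\cdot\eta^{\otimes4}+\eps^4(2\pi)^6\,\mathbb{B}^*_4\mathbb{B}^*_2\cdot\eta^{\otimes6}$, which is exactly the computation the paper performs. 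Your additional remarks on the invertibility of $-\eps^2\mathbb{B}^*_2D^2+\mathrm{Id}$ and the bijection between $U^*_{ad}$ and $\hat U_{ad}$ only make explicit steps the paper leaves implicit.
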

Note that the above definition is designed in such way that the composition of two symmetric tensors is again symmetric. Hereby symmetry is understood in weak sense. We say 
that a 4-tensor $\mathbb{A}\in\R^{n^4}$ and a 6-tensor $\mathbb{B}\in\R^{n^6}$ are symmetric, if $(\mathbb{A})_{i_1,i_2,i_3,i_4}=(\mathbb{A})_{i_3,i_4,i_1,i_2}$ and 
$(\mathbb{B})_{i_1,i_2,i_3,i_4,i_5,i_6}=(\mathbb{B})_{i_4,i_5,i_6,i_1,i_2,i_3}$ for all indeces $i_1,\dots,i_6\in\{1,\dots,n\}$. 

\begin{proof}[Proof of Proposition \ref{prop:reformulationnewproblem}]
The second and third part of the energy are clear due to Parseval's identity. It remains to justify the first part of $I^*_2$. 
Indeed, denoting the Fourier transform of $\tilde y$ by $\hat y,$ the first part on the cost functional 
$\hat I^*_2$ from  Definition \ref{def:alternativeeffectivemodel} reads as 
\begin{align*}
&\frac{\mu_1}{2}\int_{K}\left|\frac{1+Q^\eps_2(\eta)}{1+R^\eps_2(\eta)}(\hat f(\eta) + \hat u(\eta))-\hat y_{d,1}(\eta)\right|^2\frac{1+R^\eps_2(\eta)}{1+Q^\eps_2(\eta)}\,d\eta\\
=\,&\frac{\mu_1}{2}\int_{K}\left|\hat y(\eta)-\hat y_{d,1}(\eta)\right|^2\frac{1+R^\eps_2(\eta)}{1+Q^\eps_2(\eta)}\,d\eta\\
=\,&\frac{\mu_1}{2}\int_{K}\left|\frac{\hat y(\eta)-\hat y_{d,1}(\eta)}{1+Q^\eps_2(\eta)}\right|^2(1+R^\eps_2(\eta))(1+Q^\eps_2(\eta))\,d\eta.
\end{align*}
By a direct calculation in Fourier space one finds that $\frac{\hat y(\eta)}{1+Q^\eps_2(\eta)}$ is the Fourier transform of $ w$ and $\frac{\hat y_{d,1}(\eta)}{1+Q^\eps_2(\eta)}$ the 
Fourier transform of $w_{d,2}$. Moreover, 
\begin{align*}
&(1+R^\eps_2(\eta))(1+Q^\eps_2(\eta))\\
=&1+(2\pi)^2\left[\mathbb{A}^*_2 + 2\eps^2\mathbb{B}^*_2\right]\cdot\eta^{\otimes 2} + 
\eps^2(2\pi)^4\left[\mathbb{B}^*_4 + \mathbb{A}^*_2\mathbb{B}^*_2 + \eps^2\mathbb{B}^*_2\mathbb{B}^*_2\right]\cdot\eta^{\otimes 4}
+\eps^4(2\pi)^6\,\mathbb{B}^*_4\mathbb{B}^*_2\cdot\eta^{\otimes 6},
\end{align*}
which, together with Parseval's identity, provides the claimed result. 
\end{proof}

\section{Outlook}

One of the main applications of Bloch waves in the context of homogenization is the long time behavior of waves in $\eps$-periodic media. 
Actually, while for fixed times $t\in[0,T]$ the constant coefficient effective wave equation 
\begin{equation}
\label{eq:effectivewaves}
\partial^2_t u-\nabla\cdot(A^{\text{eff}}\nabla u)=f
\end{equation}
provides a good approximation of the original oscillatory model, for long times 
$t\in[0,T/\eps^2]$ dispersive effects set in, which are not captured by the standard effective equation \eqref{eq:effectivewaves}. 
In the articles \cite{AP,AP2,AP3,ALR,Lam} long time effective disersive models are derived by means of two-scale expansion and in \cite{DLS,DLS2,santosa} by means  
of Bloch wave analysis. We believe that the approach of this paper can also be applied to optimal control problems involving oscillatory wave equations on large time intervals. 

Another generalization regards the control problem of \cite{KP1}, where the oscillatory matrices in the PDE-constraint and in the energy may differ, 
cf. the discussion in the introduction. In a future project we hope to extend the Bloch approach to this general framework.


\begin{thebibliography}{20}

\bibitem{AP}{\sc A. Abdulle \& T. N. Pouchon},
{\em A priori error analysis of the finite element heterogeneous multiscale method for the wave equation in heterogeneous media over long time},
SIAM J. Numer. Anal., 54(3) (2016), 1507-1534 .

\bibitem{AP2}{\sc A. Abdulle \& T. N. Pouchon},
{\em Effective models for the multidimensional wave equation in heterogeneous media over long time and numerical homogenization}, 
Mathematical Models \& Methods In Applied Sciences, vol. 26, num. 14 (2016).

\bibitem{AP3}{\sc A. Abdulle \& T. N. Pouchon},
{\em Effective models and numerical homogenization for wave propagation in heterogeneous media on arbitrary timescales},
 arXiv:1905.09062 (2019).

\bibitem{ABV}{\sc G.~Allaire, M.~Briane \& M.~Vanninathan}, 
{\em A comparison between two scale asymptotic expansions and Bloch wave expansions for the homogenization of periodic structures}, 
SEMA Journal 73(3) (2016), 237-259. 

\bibitem{ALR}{\sc G.~Allaire, A.~Lamacz , J.~Rauch}, 
{\em Crime Pays; Homogenized Wave Equations for Long Times.}, submitted.

\bibitem{BP}{\sc  N. Bakhvalov, G. Panasenko},
{\em Homogenization: Averaging Processes in Periodic Media,}
   Kluwer, Dordrecht (1989).

\bibitem{BG}{\sc A. Benoit, A. Gloria},
{\em Long-time homogenization and asymptotic ballistic transport of classical waves,}
Annales Scientifiques de l'{\'E}cole Normale Sup{\'e}rieure (4) 52(3) (2019),703-759.

\bibitem{BLP}{\sc A.~Bensoussan, J.-L.~Lions \& G.~Papanicolaou}, 
{\em Asymptotic analysis for periodic structures}, corrected reprint of the 1978 original, AMS Chelsea Publishing, Providence, RI, (2011).

\bibitem{BS}{\sc G.~Bouchitt\'e, B.~Schweizer},
{\em Homogenization of Maxwell's equations in a split ring geometry}, 
{Multiscale Model. Simul.}, {\bf 8} (3) (2010), 717-750.

\bibitem{BFM}{\sc S.~Brahim-Otsmane, G.A.~Francfort, F.~Murat},
{\em Correctors for the homogenization of the wave and heat equations},
{J. Math. Pures Appl.}, {\bf 71} (3) (1992), 197-231.

\bibitem{BDFS}{\sc G.~Buttazzo, M.E.~Drakhlin, L.~Freddi, E.~Stepanov},
{\em Homogenization of optimal control problems for functional-differential equations}, 
{J. Optim. Theory Appl.}, {\bf 93} (1) (1997), 103-119. 

\bibitem{CDGZ}{\sc D.~Cioranescu, A.~Damlamian, P.~Donato, G.~Griso, R.~Zaki},
{\em The periodic unfolding method in domains with holes}, 
{SIAM J. Math. Anal.}, {\bf 44} (2) (2012), 718-760.

\bibitem{CV}{\sc C.~Conca, M.~Vanninathan},
{\em Homogenization of periodic structures via Bloch decomposition}, 
{SIAM J. Appl. Math.}, {\bf 57} (6) (1997), 1639-1659.

\bibitem{COV1}{\sc C.~Conca, R.~Orive \& M.~Vanninathan},
{\em Bloch approximation in homogenization and applications},
SIAM J. Math. Anal., {\bf 33} (5) (2002), 1166-1198.

\bibitem{COV2}{\sc C.~Conca, R.~Orive \& M.~Vanninathan},
{\em On Burnett coefficients in periodic media}, 
{J.~Math. Phys.}, {\bf 47} (3) (2006), 11~pp.

\bibitem{DLS}{\sc T.~Dohnal, A.~Lamacz \& B.~Schweizer},
{\em Bloch-wave homogenization on large time scales and dispersive effective wave equations}, 
{Multiscale Model. Simul.}, {\bf 12} (2) (2014), 488-513.

\bibitem{DLS2}{\sc T.~Dohnal, A.~Lamacz \& B.~Schweizer},
{\em Dispersive homogenized models and coefficient formulas for waves in general periodic media},
{Asymptot. Anal.}, {\bf 93} (1-2) (2015), 21-49.


 
\bibitem{KM}{\sc S.~Kesavan, T.~ Muthukumar}, 
{\em Homogenization of an optimal control problem with state-constraints}, 
{Differ. Equ. Dyn. Syst.}, {\bf 19} (4) (2011), 361-374.

\bibitem{KR}{\sc S.~Kesavan, M.~Rajesh},
{\em Homogenization of periodic optimal control problems via multi-scale convergence}, 
{Proc. Indian Acad. Sci. Math. Sci.} {\bf 108} (2) (1998), 189-207. 

\bibitem{KP1}{\sc S.~Kesavan, J.~Saint Jean Paulin},
{\em Homogenization of an optimal control problem}, 
{SIAM J. Control Optim.}, {\bf 35} (5) (1997), 1557-1573. 

\bibitem{KL}{\sc P.~Kogut, G.~Leugering}, 
{\em On S-homogenization of an optimal control problem with control and state constraints}, 
{Z. Anal. Anwendungen}, {\bf 20} (2) (2001), 395-429.


\bibitem{Lam}{\sc A.~Lamacz},
{\em Dispersive effective models for waves in heterogeneous media}, 
Math. Models Methods Appl. Sci., {\bf 21} (9) (2011), 1871-1899.

\bibitem{LS}{\sc A.~Lamacz, B.~Schweizer},
{\em Effective acoustic properties of a meta-material consisting of small Helmholtz resonators}, 
{ Discrete Contin. Dyn. Syst. Ser. S}, {\bf 10} (4) (2017), 815-835. 


\bibitem{L}{\sc Lions, J.-L.}
{\em {Contr\^{o}le optimal de syst\`emes gouvern\'{e}s par des \'{e}quations auxd\'{e}riv\'{e}es partielles}}
Dunod, Paris; Gauthier-Villars, Paris (1968). 

\bibitem{T}{\sc F. Tr\"oltzsch}
{\em Optimal Control of Partial Differential Equations}
 Grad. Stud. Math. 112, AMS, Providence, RI (2010).

\bibitem{SP}{\sc E. Sanchez-Palencia},
   {\em Non-Homogeneous Media and Vibration Theory},
   Springer Lecture Notes in Physics {\bf 129} (1980). 

\bibitem{santosa}{\sc  F. Santosa, W. Symes},
{\em A dispersive effective medium for wave propagation in periodic composites}, 
{ SIAM J. Appl. Math.}, {\bf 51} (1991), 984-1005.

\bibitem{Y13}
I. Yousept, \textit{Optimal control of quasilinear $\boldsymbol{H}(\mathbf{curl})$-elliptic partial differential equations in
magnetostatic field problems}, SIAM J. Control and Optim., 51 (2013), 3624-3651.

\bibitem{Y17}
I.~Yousept.
\newblock \textit{Optimal control of non-smooth hyperbolic evolution
 Maxwell equations in type-{II} superconductivity},
\newblock {{SIAM J. Control and Optim.}},  55 (2017), 2305-2332.


\end{thebibliography}
\end{document}